\newtheorem{lemma}{Lemma}[section]
\newtheorem{theorem}{Theorem}[section]
\newtheorem{proposition}{Proposition}[section]
\newtheorem{remark}{Remark}[section]
\numberwithin{equation}{section}
\newcommand{\dis}{\displaystyle}
\newcommand{\rmre}{{\rm Re}}
\newcommand{\R}{\mathbb{R}}
\newcommand{\Z}{\mathbb{Z}}
\newcommand{\T}{\mathbb{T}}
\newcommand{\FB}{\mathbf{B}}
\newcommand{\FM}{\mathbf{M}}
\newcommand{\FP}{\mathbf{P}}
\newcommand{\FL}{\mathbf{L}}
\newcommand{\FI}{\mathbf{I}}
\newcommand{\CD}{\mathcal{D}}
\newcommand{\CE}{\mathcal{E}}
\newcommand{\CF}{\mathcal{F}}
\newcommand{\CN}{\mathcal{N}}
\newcommand{\na}{\nabla}
\newcommand{\al}{\alpha}
\newcommand{\be}{\beta}
\newcommand{\ga}{\gamma}
\newcommand{\om}{\omega}
\newcommand{\la}{\lambda}
\newcommand{\de}{\delta}
\newcommand{\si}{\sigma}
\newcommand{\pa}{\partial}
\newcommand{\eps}{\epsilon}
\newcommand{\De}{\Delta}
\newcommand{\Ga}{\Gamma}
\begin{document}

\title{\bf  Optimal Time Decay of the Vlasov-Poisson-Boltzmann System in $\R^3$}
\author{\textsc{Renjun Duan} \\[1mm]
\small\it Johann Radon Institute for Computational and Applied Mathematics\\
\small\it Austrian Academy of Sciences\\
\small\it Altenbergerstrasse 69, A-4040 Linz, Austria\\[2mm]
\textsc{Robert M. Strain}\\[1mm]
\small\it Department of Mathematics, Princeton University\\
\small\it Fine Hall, Washington Road, Princeton NJ 08544-1000, USA}
\date{}
\maketitle

\begin{abstract}
The Vlasov-Poisson-Boltzmann System governs the time evolution of
the distribution function for the dilute charged particles in the
presence of a self-consistent electric potential force through the
Poisson equation. In this paper, we are concerned with the rate of
convergence of solutions to equilibrium for this system over $\R^3$.
It is shown that the electric field which is indeed responsible for
the lowest-order part in the energy space reduces the speed of
convergence and hence the dispersion of this system over the full
space is slower than that of the Boltzmann equation without forces,
where the exact difference between both power indices in the
algebraic rates of convergence is $1/4$. For the proof, in the
linearized case with a given non-homogeneous source,  Fourier
analysis is employed to obtain time-decay properties of the solution
operator. In the nonlinear case, the combination of the linearized
results and the nonlinear energy estimates with the help of the
proper Lyapunov-type inequalities leads to the optimal time-decay
rate of perturbed solutions under some conditions on initial data.
\end{abstract}




\tableofcontents

\section{Introduction}

The Vlasov-Poisson-Boltzmann (called VPB in the sequel for
simplicity) system is a physical model describing the time evolution
of dilute charged particles (e.g., electrons) under a given external
magnetic field \cite{MRS,CIP-Book}. The VPB system for one-species
of particles in the whole space $\R^3$ reads
\begin{eqnarray}
 &\dis \pa_t f+\xi\cdot \na_x f+\na_x\Phi\cdot\na_\xi f = Q(f,f),\label{VPB.1}\\
&\dis  \De_x \Phi=\int_{\R^3}fd\xi-\bar{\rho}(x),\label{VPB.2}
\end{eqnarray}
with initial data
\begin{equation}\label{VPB.ID}
    f(0,x,\xi)=f_0(x,\xi).
\end{equation}
Here, the unknown $f=f(t,x,\xi)$ is a non-negative function standing
for the number density of gas particles which have position
$x=(x_1,x_2,x_3)\in\R^3$ and velocity
$\xi=(\xi_1,\xi_2,\xi_3)\in\R^3$ at time $t>0$. $Q$ is the bilinear
collision operator for the hard-sphere model defined by
\begin{eqnarray*}
  \nonumber&\dis Q(f,g)=\int_{\R^3\times S^{2}}(f'g_\ast'-fg_\ast)
  |(\xi-\xi_\ast)\cdot\om |d\om d\xi_\ast, \\
 \nonumber&\dis f=f(t,x,\xi),\ \ f'=f(t,x,\xi'), \ \ g_\ast=g(t,x,\xi_\ast), \
\ g_\ast'=g(t,x,\xi_\ast'),\\
\nonumber&\dis \xi'=\xi-[(\xi-\xi_\ast)\cdot \om]\om,\ \
\xi_\ast'=\xi_\ast+[(\xi-\xi_\ast)\cdot \om]\om,\ \ \om\in S^{2}.
\end{eqnarray*}
The potential function $\Phi=\Phi(t,x)$ generating the
self-consistent electric field in \eqref{VPB.1} is coupled with
$f(t,x,\xi)$ through the Poisson equation \eqref{VPB.2}.
$\bar{\rho}(x)$ denotes the stationary background density satisfying
\begin{equation*}
    \bar{\rho}(x)\to\rho_\infty\ \ \text{as}\ \ |x|\to\infty,
\end{equation*}
for  a positive constant state $\rho_\infty>0$.

The existence of stationary solutions to the system
\eqref{VPB.1}-\eqref{VPB.2} and the nonlinear stability of solutions
to the Cauchy problem \eqref{VPB.1}-\eqref{VPB.ID} near the
stationary state were obtained in \cite{DY-09VPB}, and the
corresponding results have been recalled in the appendix. In this
paper, we are concerned with the rate of convergence of solutions
towards the stationary states. Since the background density does not
produce any essential difficulty, for simplicity it is supposed
throughout this paper that
\begin{equation}\label{ass.backg}
 \bar{\rho}(x)\equiv \rho_\infty=1, \ \ x\in\R^3.
\end{equation}
In this case,  the VPB system \eqref{VPB.1}-\eqref{VPB.2} has a
stationary solution $(f_\ast,\Phi_\ast)$ with
\begin{equation*}
    f_\ast=\FM,\ \ \Phi_\ast=0,
\end{equation*}
where
\begin{equation*}
\FM=\frac{1}{(2\pi)^{ 3/2}}\exp\left(-|\xi|^2/2\right)
\end{equation*}
is  a normalized global Maxwellian in three dimensions which has
zero bulk velocity and unit density and temperature. One of the main
results of this paper is stated as follows. Notations and norms will be
explained at the end of this section.

\begin{theorem}\label{thm.cr}
Let $N\geq 4$ and $w(\xi)=(1+|\xi|^2)^{1/2}$. Assume  that $f_0\geq
0$ and
\begin{equation}\label{thm.cr.1}
    \left\|\frac{f_0-\FM}{\sqrt{\FM}}\right\|_{H^N\cap L^2_w\cap Z_1}
\end{equation}
is sufficiently small. Let $f\geq 0$ be the solution obtained in
Proposition \ref{a.prop.exi} to the Cauchy problem \eqref{VPB.1},
\eqref{VPB.2} and \eqref{VPB.ID} under the assumption
\eqref{ass.backg}. Then, $f$ enjoys the estimate with algebraic rate
of convergence:
\begin{equation}\label{thm.cr.2}
 \left\|\frac{f(t)-\FM}{\sqrt{\FM}}\right\|_{H^N}\leq C
 \left\|\frac{f_0-\FM}{\sqrt{\FM}}\right\|_{H^N\cap Z_1}
  (1+t)^{-\frac{1}{4}}.
\end{equation}
Furthermore, under the following additional conditions on $f_0$, $f$
also enjoys some estimates with extra rates of convergence:

\medskip

\noindent{Case 1.} If
\begin{equation}\label{thm.cr.3}
    \int_{\R^3}(f_0(x,\xi)-\FM)d\xi\equiv 0
\end{equation}
holds for any $x\in\R^3$, then one has
\begin{equation}\label{thm.cr.4}
 \left\|\frac{f(t)-\FM}{\sqrt{\FM}}\right\|_{H^N}\leq C
 \left\|\frac{f_0-\FM}{\sqrt{\FM}}\right\|_{H^N\cap Z_1}
  (1+t)^{-\frac{3}{4}}.
\end{equation}

\medskip

\noindent{Case 2.} Fix any $0<\eps\leq 3/4$ and suppose that
\begin{equation}\label{thm.cr.5}
    \left\|\frac{f_0-\FM}{\sqrt{\FM}}\right\|_{H^N_w\cap Z_1}
\end{equation}
is sufficiently small.
Then one has
\begin{eqnarray}
 &&\dis  \left\|\frac{f(t)-\sum_{j=0}^4\langle e_j,f(t)\rangle e_j\FM}{\sqrt{\FM}}\right\|_{
 H^N_w}\nonumber\\
 &&\dis\hspace{3cm} + \|\langle e_0,f(t)-\FM\rangle\|_{H^N_x}+\sum_{j=1}^4
 \|\na_x \langle e_j,f(t)\rangle\|_{H^{N-1}_x}\nonumber \\
 &&\dis \leq C\left\|\frac{f_0-\FM}{\sqrt{\FM}}\right\|_{ H^N_w\cap
    Z_1} (1+t)^{-\frac{3}{4}+\eps},\label{thm.cr.7}
\end{eqnarray}
where $\{e_j\}_{j=0}^4$ is the orthonormal set in $L^2(\R^3;\FM
d\xi)$ defined by
\begin{equation}
  e_0=1,\ \
   e_j=\xi_j\,(1\leq j\leq 3),\ \
   e_4 =\frac{|\xi|^2-3}{\sqrt{6}}. \label{thm.cr.8}
\end{equation}
\end{theorem}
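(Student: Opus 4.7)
The plan is to set $u = (f-\FM)/\sqrt{\FM}$ and rewrite the Cauchy problem \eqref{VPB.1}--\eqref{VPB.ID} as the semilinear equation
\begin{equation*}
\pa_t u + \xi\cdot\na_x u - \xi\sqrt{\FM}\cdot\na_x\Phi - \FL u = \Ga(u,u) + (\text{field nonlinearities}),
\end{equation*}
coupled with $\De_x\Phi=\langle\sqrt{\FM},u\rangle_{L^2_\xi}$. I would combine (i) sharp time-decay estimates for the linearized semigroup $e^{t\FB}$ obtained via Fourier analysis in $x$, with (ii) the nonlinear \emph{a priori} energy estimates provided by Proposition \ref{a.prop.exi}, closing the nonlinear problem through a Duhamel bootstrap and Lyapunov-type time-weighted inequalities.

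For the linearized analysis, Fourier transform in $x$ yields $\widehat\Phi(k)=-|k|^{-2}\langle\sqrt{\FM},\widehat u(k)\rangle$, reducing the problem to a family of velocity evolutions parametrised by $k$. Splitting into low and high frequencies and exploiting the spectral gap of $\FL$ on $(\ker\FL)^\perp$, the high-frequency part decays exponentially, while the low-frequency macroscopic dispersion picks up an extra $|k|^{-2}$ from the Poisson kernel. This turns the density mode from a damped sound wave into a heat-like relaxation paired with one order lower regularity of the data in $\dot H^{-2}$, producing the rate
\begin{equation*}
\|e^{t\FB}u_0\|_{L^2_{x,\xi}}\leq C(1+t)^{-1/4}\|u_0\|_{L^2\cap Z_1},
\end{equation*}
which improves to $(1+t)^{-3/4}$ whenever $\widehat{\langle\sqrt{\FM},u_0\rangle}(0)=0$, since then the $|k|^{-2}$ singularity at the origin is absorbed by the data.

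With these in hand, \eqref{thm.cr.2} is obtained via the standard bootstrap: set $Q(t)=\sup_{0\leq s\leq t}(1+s)^{1/4}\|u(s)\|_{H^N}$, apply Duhamel together with the energy inequality of Proposition \ref{a.prop.exi} and product/commutator estimates for $\Ga(u,u)$ and the field terms to derive $Q(t)\leq C\|u_0\|_{H^N\cap Z_1}+CQ(t)^2$, and conclude for small data. Case 1 runs along the same lines except that the global mass identity $\frac{d}{dt}\int\int\sqrt{\FM}\,u\,dx\,d\xi=0$, combined with \eqref{thm.cr.3}, keeps $\widehat{\langle\sqrt{\FM},u(t)\rangle}(0)\equiv 0$ for all $t\geq 0$; since every nonlinear source integrates to zero against $\sqrt{\FM}\,dx\,d\xi$ (in particular $\int Q(f,f)\,d\xi=0$), the improved $(1+t)^{-3/4}$ linear rate is available throughout Duhamel and yields \eqref{thm.cr.4}.

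The main obstacle is Case 2, where $\{\FI-\FP\}u$ and the gradients of the macroscopic fluid modes must decay at the almost-optimal $(1+t)^{-3/4+\eps}$ while $\|u\|_{H^N}$ itself only obeys the slow $(1+t)^{-1/4}$ bound from \eqref{thm.cr.2}. Here I would first derive a weighted dissipation inequality in $H^N_w$ whose coercive part controls $\{\FI-\FP\}u$ in the full collision-frequency norm, then introduce the time-weighted Lyapunov functional
\begin{equation*}
\mathcal E_\eps(t)=(1+t)^{3/2-2\eps}\Bigl(\|\{\FI-\FP\}u\|_{H^N_w}^2+\|\langle e_0,u\rangle\|_{H^N_x}^2+\sum_{j=1}^4\|\na_x\langle e_j,u\rangle\|_{H^{N-1}_x}^2\Bigr),
\end{equation*}
differentiate in $t$, and absorb all nonlinear contributions using the previously established $(1+t)^{-1/4}$ decay of $\|u\|_{H^N}$ together with smallness of $\|u_0\|_{H^N_w\cap Z_1}$. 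The delicate step is the field nonlinearity $\na_x\Phi\cdot\na_\xi u$: its $\na_x\Phi$ factor carries only the slow density decay, and the tiny $\eps$-loss in \eqref{thm.cr.7} is exactly the slack one needs to balance this against the faster microscopic dissipation when integrating $\mathcal E_\eps$ in time.
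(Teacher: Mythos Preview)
Your overall architecture (Fourier-based linear decay plus Duhamel bootstrap plus the energy inequality of Proposition~\ref{a.prop.exi}) matches the paper, but there is a genuine gap in how you close the nonlinear estimates, already at the level of the base rate \eqref{thm.cr.2}.

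The bootstrap you propose, $Q(t)\le C\|u_0\|_{H^N\cap Z_1}+CQ(t)^2$ with $Q(t)=\sup_{s\le t}(1+s)^{1/4}\|u(s)\|_{H^N}$, does not close if you feed the slow linear rate $(1+t)^{-1/4}$ into the Duhamel integral: the resulting time convolution behaves like $\int_0^t (1+t-s)^{-1/4}(1+s)^{-1/2}\,ds$, which grows like $(1+t)^{1/4}$. The paper avoids this by never using the slow rate on the inhomogeneous part. The key structural observation you are missing is that the full nonlinearity $G=\Ga(u,u)-\na_x\Phi\cdot\na_\xi u+\tfrac12(\xi\cdot\na_x\Phi)u$ satisfies $\FP_0 G\equiv 0$ \emph{pointwise in $x$}: the collision part has $\FP\Ga(u,u)=0$, and for the field terms one checks $\langle\sqrt{\FM},-\na_x\Phi\cdot\na_\xi u+\tfrac12\xi\cdot\na_x\Phi\,u\rangle=0$ by integration by parts in~$\xi$. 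Decomposing $G=\{\FI-\FP\}G+\FP_1 G$, every piece of the Duhamel source lies in the range of $\{\FI-\FP_0\}$, so the \emph{fast} linear bounds (those of type \eqref{thm.liDe.2}--\eqref{thm.liDe.3}) apply to the integral term; the slow $(1+t)^{-1/4}$ rate enters only through the homogeneous piece $e^{t\FB}u_0$. Concretely, the paper does not bootstrap $\|u\|_{H^N}$ directly but uses the Lyapunov inequality $\frac{d}{dt}\CE+\la\CE\le C(\|\FP_1 u\|^2+\|\na_x\De_x^{-1}\FP_0 u\|^2)$ and estimates the right-hand side through Duhamel with these fast rates.

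This also explains why your mechanism for Case~1 is off. The improved linear rate $(1+t)^{-3/4}$ is \emph{not} triggered by the vanishing of a single Fourier mode $\widehat{\langle\sqrt{\FM},u_0\rangle}(0)=0$; it requires $\FP_0 u_0\equiv 0$ for every $x$ (equivalently, \eqref{thm.cr.3}), so that the singular $|k|^{-2}|\widehat{\FP_0 u_0}(k)|^2$ contribution to the energy functional vanishes identically. Mere mean-zero data in $Z_1$ gives no quantitative cancellation of $|k|^{-2}$ for $0<|k|\ll 1$. Correspondingly, there is nothing to ``propagate'' through the global mass identity: the Duhamel source already has $\FP_0 G\equiv 0$ pointwise (as above), and Case~1 simply upgrades the homogeneous term by applying the fast estimate \eqref{thm.liDe.2} to $e^{t\FB}\{\FI-\FP_0\}u_0$. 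Your Case~2 sketch is broadly in line with the paper's weighted high-order Lyapunov argument; once the $\FP_0 G=0$ structure is in place, the $\eps$-loss indeed arises from the time-convolution $\int_0^t(1+t-s)^{-3/4}(1+s)^{-1+\eps}\,ds$ whose constant blows up as $\eps\to 0$.
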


\begin{remark}
The rates of convergence in \eqref{thm.cr.2} and \eqref{thm.cr.4}
are optimal under the corresponding assumptions in the sense that
they coincide with those rates given in Theorem \ref{thm.liDe} at
the level of linearization. Similarly, the rate of convergence in
\eqref{thm.cr.7} is almost optimal.
To our
knowledge, these results are the first ones in the study of rate of
convergence for the nonlinear VPB system in $\R^3$. Of course, it is
an interesting problem to improve the almost optimal rate in
\eqref{thm.cr.7} to the optimal rate.
\end{remark}

\begin{remark}\label{rem.cr.1}
In \cite{YZ-CMP} on the same issue, some slower algebraic rates of
convergence of solutions in the space $L^\infty_x(L^2_\xi)$ were
obtained on the basis of the pure energy estimates and time-decay of
some ordinary differential equation. Here, under the assumptions of
\eqref{thm.cr.5}, from the Sobolev inequality,  \eqref{thm.cr.7}
shows that
\begin{equation*}
    \left\|\frac{f(t)-\FM}{\sqrt{\FM}}\right\|_{L^\infty_x(H^{N-2}_{\xi,w})}\leq
    C\left\|\frac{f_0-\FM}{\sqrt{\FM}}\right\|_{ H^N_w\cap
    Z_1} (1+t)^{-\frac{3}{4}+\eps},
\end{equation*}
where
$L^\infty_x(H^{N-2}_{\xi,w})=L^\infty(\R^3_x;H^{N-2}(\R^3_\xi;wd\xi))$.
The above inequality implies the almost optimal rate of convergence
of solutions. The main reason why one can here obtain the optimal or
almost optimal rates is that we make full use of the time-decay
properties of solutions to the linearized system with nonhomogeneous
sources. This will be carried out in Section \ref{sec.ld}, where we
shall also state another main result Theorem \ref{thm.liDe} in this
paper.
\end{remark}

\begin{remark}
As shown in \cite{UY-AA}, for the Boltzmann equation without
external forces, the power index in \eqref{thm.cr.2} takes the value
$3/4$. Hence, the dispersion of the VPB system in $\R^3$ is slower
than that of the Boltzmann equation. This is essentially caused by
the self-induced potential force. Furthermore, if one decomposes $f$
as the summation of three parts
\begin{equation*}
    f=\langle e_0, f\rangle e_0 \FM +\sum_{j=1}^4 \langle
    e_j,f\rangle e_j\FM +\left\{f- \sum_{j=0}^4 \langle
    e_j,f\rangle e_j\FM \right\},
\end{equation*}
then by comparing \eqref{thm.cr.2} and  \eqref{thm.cr.7}, one can
find out that the effect of the  self-induced potential force on
rates of convergence only happens in the above second part which
corresponds to the projections of $f$ along the momentum components
$e_j$ $(1\leq j\leq 3)$ and the temperature component $e_4$. This
phenomenon is consistent with that recently obtained by
\cite{LMZ-NSP} for the Navier-Stokes-Poisson system in $\R^3$.

\end{remark}

The time rate of convergence to equilibrium is an important topic in the
mathematical theory of the physical world. As pointed out in
\cite{Vi}, there exist general structures in which the interaction
between a conservative part and a degenerate dissipative part lead
to convergence to equilibrium, where this property was called {\it
hypocoercivity}. Theorem \ref{thm.cr} indeed provides a concrete
example of the hypocoercivity property for the nonlinear VPB system
in the framework of perturbations. The key of the method to study
hypocoercivity provided by this paper is to carefully capture the
time-decay rates for the perturbed macroscopic system of equations
with the hyperbolic-parabolic structure, which is in the same spirit
of the Kawashima's work \cite{Ka}.

There has been extensive investigations on the rate of convergence
for the nonlinear Boltzmann equation or related spatially
non-homogeneous kinetic equations with relaxations. In what follows
let us mention some of them. In the context of perturbed solutions,
the first result was given by Ukai \cite{Ukai-1974}, where the
spectral analysis was used to obtain the exponential rates for the
Boltzmann equation with hard potentials on torus. The results in
\cite{Ukai-1974} were improved by Ukai-Yang \cite{UY-AA} in order to
consider existence of time-periodic states in the presence of
time-periodic sources, which was later extended by
Duan-Ukai-Yang-Zhao \cite{DUYZ} to the case with time-periodic
external forcing by using the  energy-spectrum method. We also
mention Glassey-Strauss \cite{GS-DCDS} for the study of the
essential spectra of the solution operator of the VPB system.
Recently, Strain-Guo \cite{SG} developed a weighted energy method to
get the exponential rate of convergence for the Boltzmann equation
and Landau equation with soft potentials on the torus.  Earlier but
along the same line of research, Strain-Guo  \cite{SG0} developed a
general theory of polynomial decay rates to any order in a unified
framework and applied it to four kinetic equations, the
Vlasov-Maxwell-Boltzmann System, the relativistic Landau-Maxwell
System, the Boltzmann equation with cutoff soft-potentials and the
Landau equation all on the torus.

Another powerful tool is entropy method which has  general
applications in the existence theory for nonlinear equations. By
using this method as well as the elaborate analysis of  functional
inequalities, time-derivative estimates  and interpolation,
Desvillettes-Villani \cite{DV} obtained first the almost exponential
rate of convergence of solutions to the Boltzmann equation on torus
with soft potentials for large initial data under the additional
regularity conditions that all the moments of $f$ are uniformly
bounded in time and $f$ is bounded in all Sobolev spaces uniformly
in time. See Villani \cite{Vi} for extension and simplification of
results in \cite{DV} still conditionally to smoothness bounds by
further designing a new auxiliary functional. Notice that \cite{SG0}
provided a simple proof of \cite{DV} for the unconditional
perturbative regime. Recently, by finding some proper Lyapunov
functional defined over the Hilbert space, Mouhot-Neumann \cite{MN}
obtained the exponential rates of convergence for some kinetic
models with general structures in the case of torus; see also
\cite{Vi} for the  similar study. An extension of \cite{MN} to
models with additional confining potential forces was given by
Dolbeault-Mouhot-Schmeiser \cite{DMS}.

Besides those methods mentioned above for the study of rates of
convergence, the method of Green's functions was also founded by
Liu-Yu \cite{Liu-Yu-1DG} to expose the pointwise large-time behavior
of solutions to the Boltzmann equation in the full space $\R^3$.

Here, we mention that if there is no collision in
\eqref{VPB.1}-\eqref{VPB.2}, that is to consider the Vlasov-Poisson
system, then the so-called Landau damping comes out. This was
recently studied by Mouhot-Villani \cite{MV} on torus, where it was
shown that even though in the absence of kinetic relaxation, in the
analytic regime the solution still converges weakly in some sense to
certain large-time states determined by the initial data and the
nonlinear system itself, for any interaction potential less singular
than Coulomb.  In the Coulomb case, they established Landau damping
over exponentially long times.

Finally, we also mention some of results on the existence theory of
the VPB system and related kinetic equations: global existence of
renormalized solutions with large initial data
\cite{DL.BE,DL-CPAM,Mischler}, global existence of classical
solutions near Maxwellians \cite{Liu-Yu-Shock,Liu-Yang-Yu,YYZ-ARMA},
\cite{Guo2,Guo-IUMJ,S.VMB} and \cite{Duan,DY-09VPB}, and global
existence of  solutions near vacuum \cite{Guo3,DYZ-DCDS}.

The rest of this paper is organized as follows. In Section
\ref{sec.MMD} we make some preparations to reformulate the Cauchy
problem of the VPB system, make the macro-micro decomposition for
both the solution and equations, obtain a system of equations
describing the evolution of some macroscopic velocity moment
functions for the later analysis in both the linear and nonlinear
cases, and reduce Theorem \ref{thm.cr} in an equivalent form to
Proposition \ref{prop.main}. In Section \ref{sec.ld} and Section
\ref{sec.ES}, we obtain the time-decay rates of perturbed solutions
under some conditions on initial data in the linear and nonlinear
cases, respectively. Here, Theorem \ref{thm.liDe} which is another
main result of this paper is applied together with some energy
estimates to prove Proposition \ref{prop.main}. Some free energy
functionals and Lyapunov-type inequalities play a key role in the
proof of  Theorem \ref{thm.liDe} and  Proposition \ref{prop.main}.
Finally, we conclude this paper with an appendix in Section
\ref{sec.app} by listing some results obtained in \cite{DY-09VPB}
about the existence of the stationary solution and its nonlinear
stability for the VPB system \eqref{VPB.1}-\eqref{VPB.2}.

\medskip

\noindent{\bf Notations.}\ Throughout this paper,  $C$  denotes
some positive (generally large) constant and $\la$ denotes some positive (generally small) constant, where both $C$ and
$\la$ may take different values in different places. In addition,
$A\sim B$ means $\la_1 A\leq B \leq \la_2 A$ for two generic
constants $\la_1>0$ and $\la_2>0$. For an integrable function $g:
\R^n\to\R$, its Fourier transform $\widehat{g}=\CF g$ is defined by
\begin{equation*}
   \widehat{g}(k)= \CF g(k)= \int_{\R^n} e^{-2\pi i x\cdot k} g(x)dx, \ \ x\cdot
    k=:\sum_{j=1}^nx_jk_j,
\end{equation*}
for $k\in\R^n$, where $i =\sqrt{-1}\in \mathbb{C}$ is the imaginary
unit. For two complex vectors $a,b\in\mathbb{C}^n$, $(a\mid
b)=a\cdot \overline{b}$ denotes the dot product over the complex
field, where $\overline{b}$ is the complex conjugate of $b$. For any
integer $m\geq 0$, we use $H^m$, $H^m_x$, $H^m_\xi$ to denote the
usual Hilbert spaces $H^m(\R^n_x\times\R^n_\xi)$, $H^m(\R^n_x)$,
$H^m(\R^n_\xi)$, respectively, where $L^2$, $L^2_x$, $L^2_\xi$ are
used for the case when $m=0$. For a Banach space $X$,
$\|\cdot\|_{X}$ denotes the corresponding norm, while $\|\cdot\|$
always denotes the norm $\|\cdot\|_{L^2}$ or $\|\cdot\|_{L^2_x}$ for
simplicity. We also use $\|\cdot\|_{H^m_w}$ for the norm of the
weighted Hilbert space $H^m(\R^n_x\times\R^n_\xi;w(\xi)dxd\xi)$ and
$\|\cdot\|_{L^2_w}$ for $L^2(\R^n_x\times\R^n_\xi;w(\xi)dxd\xi)$.
We use $\langle\cdot,\cdot\rangle$ to denote the inner product over
the Hilbert space $L^2_\xi$, i.e.
\begin{equation*}
    \langle g,h\rangle=\int_{\R^n} g(\xi)h(\xi)d\xi,\ \ g,h\in
    L^2_\xi.
\end{equation*}
For $q\geq 1$, we also define
\begin{equation*}
    Z_q=L^2_\xi(L^q_x)=L^2(\R^n_\xi;L^q(\R^n_x)),\ \ \|g\|_{Z_q}=\left(\int_{\R^n}\left(\int_{\R^n}
    |g(x,\xi)|^qdx\right)^{2/q}d\xi\right)^{1/2}.
\end{equation*}
For the multiple indices $\al=(\al_1,\cdots,\al_n)$ and
$\be=(\be_1,\cdots,\be_n)$, as usual we denote
\begin{equation*}
 \pa_x^{\al}\pa_\xi^\be=\pa_{x_1}^{\al_1}\cdots\pa_{x_n}^{\al_n}
    \pa_{\xi_1}^{\be_1}\cdots\pa_{\xi_n}^{\be_n}.
\end{equation*}
The length of $\al$ is $|\al|=\al_1+\cdots+\al_n$. For simplicity,
we also use $\pa_j$ to denote $\pa_{x_j}$ for each $j=1,\cdots,n$.
Generally, except in Section \ref{sec.ld}, we consider the case of $n=3$ dimensions.

\section{Macro-micro decomposition}\label{sec.MMD}
In this section, we shall make some preparations for the later
analysis in both the linear and nonlinear cases, and moreover reduce
Theorem \ref{thm.cr} to Proposition \ref{prop.main} in the
equivalent form. Firstly, one can reformulate the Cauchy problem
\eqref{VPB.1}, \eqref{VPB.2} and \eqref{VPB.ID} as follows. Set the
perturbation $u=u(t,x,\xi)$ by
\begin{equation}\label{def.per.f}
f=\FM+\sqrt{\FM}u.
\end{equation}
Then $u$ and $\Phi$ satisfy the perturbed system:
\begin{eqnarray}
&\dis \pa_t u+\xi\cdot\na_x u+\na_x \Phi\cdot\na_\xi
u-\frac{1}{2}\xi\cdot\na_x \Phi u -\na_x\Phi \cdot\xi \sqrt{\FM}
=\FL u+\Ga(u,u),\label{VPB.p1}\\
&\dis \De_x\Phi=\int_{\R^3} \sqrt{\FM}ud\xi,\label{VPB.p2}
\end{eqnarray}
with given initial data
\begin{equation}\label{VPB.pID}
    u(0,x,\xi)=u_0(x,\xi)\equiv \frac{f_0-\FM}{\sqrt{\FM}},
\end{equation}
where $\FL u$ and $\Ga(u,u)$ are denoted by
\begin{eqnarray}
  \FL u &=& \frac{1}{\sqrt{\FM}}\left[Q(\FM,\sqrt{\FM}u)+Q(\sqrt{\FM}u,\FM)\right],\label{def.L} \\
  \Ga(u,u)&=&
  \frac{1}{\sqrt{\FM}}Q(\sqrt{\FM}u,\sqrt{\FM}u).\label{def.Ga}
\end{eqnarray}

It is well-known that for the linearized collision operator $\FL$,
one has
\begin{eqnarray*}
  (\FL u)(\xi) &=& -\nu(\xi)u(\xi)+(Ku)(\xi),\\
   \nu(\xi)&=&\int_{\R^3\times
  S^{2}}|(\xi-\xi_\ast)\cdot\om|\FM_\ast\,d\om
  d\xi_\ast,\\
\nonumber (Ku)(\xi)&=&\int_{\R^3\times S^{2}}
\left(-\sqrt{\FM}u_\ast+\sqrt{\FM_\ast'}u'+\sqrt{\FM'}u_\ast'\right)|(\xi-\xi_\ast)\cdot\om|\sqrt{\FM_\ast}d\om
  d\xi_\ast\\
  &=&\int_{\R^3}K(\xi,\xi_\ast)u(\xi_\ast)d\xi_\ast,
\end{eqnarray*}
where $\nu(\xi)$ is called the collision frequency and $K$ is a
self-adjoint compact operator on $L^2(\R^3_\xi)$ with a real
symmetric integral kernel $K(\xi,\xi_\ast)$.  The null space of the
operator $\FL$ is the five dimensional space spanned by the
collision invariants
\begin{equation*}
    \CN=Ker \FL={\rm span}\left\{\sqrt{\FM};\,\xi_i\sqrt{\FM},i=1,2,3;\,
    |\xi|^2\sqrt{\FM}\right\}.
\end{equation*}
From the Boltzmann's H-theorem, the linearized collision operator
$\FL$ is non-positive and moreover, $-\FL$ is locally coercive in
the sense that there is a constant $\la>0$ such that
\begin{equation*}
    -\int_{\R^3}u\FL u\,d\xi\geq \la\int_{\R^3}\nu(\xi)\left(\{\FI-\FP\}u\right)^2d\xi,\ \
    \forall\,u\in D(\FL),
\end{equation*}
where for fixed $(t,x)$, $\FP$ denotes the projection operator from
$L^2_\xi$ to $\CN$ and $D(\FL)$ is the domain of $\FL$ given by
\begin{equation*}
    D(\FL)=\Big\{u\in L^2_\xi\ \Big|\ \sqrt{\nu(\xi)}u\in  L^2_\xi\Big\}.
\end{equation*}
In addition, for the hard sphere model, $\nu$
satisfies
\begin{equation*}
    \nu(\xi)\sim (1+|\xi|^2)^{\frac{1}{2}}=w(\xi).
\end{equation*}
This property will be used throughout this paper.

Given any $u(t,x,\xi)$, we define the projection operator, $\FP u$, as
\begin{equation}\label{form.P}
    \FP
    u=\left\{a^u(t,x)+\sum_{j=1}^3b^u_j(t,x)\xi_j+c^u(t,x)|\xi|^2\right\}\sqrt{\FM}.
\end{equation}
Since $\FP$ is a projector, it holds that
\begin{equation*}
    \int_{\R^3}(1,\xi,|\xi|^2)\sqrt{\FM}\{\FI-\FP\} u d\xi=0,
\end{equation*}
i.e. $\{\FI-\FP\} u$ is orthogonal to $\CN$, which together with the
form \eqref{form.P} of $\FP$ imply
\begin{eqnarray}
  a^u &=& \frac{1}{2}\int_{\R^3}(5-|\xi|^2)\sqrt{\FM} ud\xi,\label{def.a}\\
  b^u &=& \int_{\R^3}\xi\sqrt{\FM} ud\xi,\label{def.b}\\
  c^u &=& \frac{1}{6}\int_{\R^3}(|\xi|^2-3)\sqrt{\FM} ud\xi,\label{def.c}
\end{eqnarray}
where $b^u=(b^u_1,b^u_2,b^u_3)$.  Thus, $u(t,x,\xi)$ can be uniquely
decomposed into
\begin{equation}\label{mmd}
\left\{\begin{array}{l}
 u(t,x,\xi)= \FP u\oplus \{\FI-\FP\}u,\\[3mm]
 \FP
    u=\left\{a^u(t,x)+b^u(t,x)\cdot \xi+c^u(t,x)|\xi|^2\right\}\sqrt{\FM}\in\CN,\\[3mm]
\{\FI-\FP\}u\in\CN^\perp,
\end{array}\right.
\end{equation}
where $\FP u$ is called the macroscopic component of $u(t,x,\xi)$
with coefficients $(a^u,b^u,c^u)$, and $\{\FI-\FP\}u$ the
microscopic
 component of $u(t,x,\xi)$. For later use, one can further decompose
 $\FP u$ as
\begin{equation}\label{mmd.pu}
\left\{\begin{array}{l}
 \dis \FP u= \FP_0 u\oplus \FP_1 u,\\[3mm]
 \dis \FP_0 u=(a^u+3c^u)\sqrt{\FM},\ \ a^u+3c^u=\int_{\R^3} \sqrt{\FM} ud\xi,\\[3mm]
\dis \FP_1u= \{b^u\cdot \xi+c^u(|\xi|^2-3)\}\sqrt{\FM},
\end{array}\right.
\end{equation}
where $\FP_0$ and $\FP_1$ are the projectors corresponding to the
hyperbolic and parabolic parts of the macroscopic (fluid) component,
respectively.

In what follows, we shall apply the macro-micro decomposition
\eqref{mmd} to the system \eqref{VPB.p1}-\eqref{VPB.p2} to deduce
the macroscopic balance laws satisfied by $(a^u,b^u,c^u)$. Firstly,
by multiplying \eqref{VPB.1} by the collision invariants
$1,\xi,|\xi|^2$, one can get the local conservation laws
\begin{eqnarray*}
  \pa_t\int_{\R^3} fd\xi+\na_x\cdot\int_{\R^3}\xi fd\xi &=& 0,\\
  \pa_t\int_{\R^3} \xi fd\xi+\na_x\cdot\int_{\R^3} \xi\otimes\xi
  fd\xi
  -\na_x \Phi \int_{\R^3} fd\xi&=&0,\\
  \pa_t\int_{\R^3}|\xi|^2fd\xi+\na_x\cdot\int_{\R^3}|\xi|^2\xi
  fd\xi
  -\na_x \Phi \cdot \int_{\R^3}\xi fd\xi&=&0.
\end{eqnarray*}
Plugging $f=\FM+ \sqrt{\FM}\FP u+ \sqrt{\FM}\{\FI-\FP\}u$ into the
above equations as well as into the Poisson equation \eqref{VPB.2}
gives
\begin{eqnarray}
  \pa_t (a^u+3c^u)+\na_x\cdot b^u &=& 0,\label{cl.0}\\
  \pa_t b^u+\na_x (a^u+5c^u)+\na_x\cdot \langle
  \xi\otimes\xi\sqrt{\FM},\{\FI-\FP\}u\rangle-\na_x\Phi &=&(a^u+3c^u)\na_x\Phi,\label{cl.1}\\
   \pa_t(3a^u+15c^u)+5\na_x\cdot b^u+\na_x\cdot  \langle
    |\xi|^2\xi\sqrt{\FM},\{\FI-\FP\}u\rangle &=&b^u\cdot \na_x\Phi,\label{cl.2}
\end{eqnarray}
and
\begin{equation}\label{cl.Poisson}
    \De_x\Phi=a^u+3c^u.
\end{equation}
Here and hereafter we  use the moment values of the normalized
global Maxwellian $\FM$:
\begin{eqnarray*}
&&\langle 1, \FM\rangle=1,\\
&&\langle |\xi_j|^2, \FM\rangle=1,\ \ \langle |\xi|^2, \FM\rangle=3,\\
&&\langle |\xi_j|^2|\xi_m|^2, \FM\rangle=1, \ \ j\neq m,\\
&&\langle |\xi_j|^4, \FM\rangle=3,\ \ \langle |\xi|^2|\xi_j|^2,
\FM\rangle=5,\ \ \langle |\xi|^4, \FM\rangle=15,\\
&&\langle |\xi|^4|\xi_j|^2, \FM\rangle=35, \ \ \langle |\xi|^6,
\FM\rangle=105.
\end{eqnarray*}
Notice that \eqref{cl.0} and \eqref{cl.2} implies
\begin{equation*}
    \pa_t c^u+\frac{1}{3}\na_x\cdot b^u +\frac{1}{6}\na_x \cdot \langle
    |\xi|^2\xi\sqrt{\FM},\{\FI-\FP\}u\rangle =0,
\end{equation*}
which is more convenient to be used to replace the time derivative
of $c^u$ in the proof.

As in \cite{D-09PD}, we also have to consider the evolution of
higher-order moments of $\{\FI-\FP\}u$:
\begin{equation*}
\langle
  \xi\otimes\xi\sqrt{\FM},\{\FI-\FP\}u\rangle, \ \ \langle
   |\xi|^2\xi\sqrt{\FM},\{\FI-\FP\}u\rangle,
\end{equation*}
which have appeared in \eqref{cl.1} and \eqref{cl.2}, respectively.
The equation \eqref{VPB.p1} can be rewritten as
\begin{eqnarray}
\pa_t \FP u+\xi\cdot \na_x \FP u-\na_x\Phi\cdot \xi \sqrt{\FM} =
-\pa_t \{\FI-\FP\}u+R+G,\label{eq.Pu}
\end{eqnarray}
where the linear term $R$ and the nonlinear term $G$ are denoted by
\begin{eqnarray}
R &=& -\xi\cdot \na_x \{\FI-\FP\}u+\FL \{\FI-\FP\}u,\label{l.form}\\
G&=&\Ga(u,u)-\na_x\Phi\cdot \na_\xi u+
\left(\frac{1}{2}\xi\cdot \na_x\Phi \right)
u .\label{n.form}
\end{eqnarray}
One can use \eqref{mmd} to further write \eqref{eq.Pu} as
\begin{eqnarray}
&&\pa_ta^u\sqrt{\FM} +\sum_{j}\{\pa_t b_j^u+\pa_j a^u-\pa_j
\Phi\}\xi_j\sqrt{\FM}
+\sum_{j}\{\pa_t c^u+\pa_jb_j^u\}|\xi_j|^2\sqrt{\FM}\nonumber\\
&&\hspace{2cm}+\sum_{j<m}\{\pa_jb_m^u+\pa_m b_j^u
\}\xi_j\xi_m\sqrt{\FM}
+\sum_{j}\pa_j c^u|\xi|^2\xi_j\sqrt{\FM}\nonumber\\
&&=-\pa_t \{\FI-\FP\}u+R+G.\label{eq.Pu.abc}
\end{eqnarray}
Define the high-order moment functions $A=(A_{jm})_{3\times 3}$ and
$B=(B_1,B_2,B_3)$ by
\begin{eqnarray}
  A_{jm}(u) = \langle(\xi_j\xi_m-1)\sqrt{\FM}, u\rangle,\ \
  B_j(u)=\langle(|\xi|^2-5)\xi_j\sqrt{\FM},
  u\rangle.\label{AB.moment}
\end{eqnarray}
Applying $A_{jm}(\cdot)$ and $B_j(\cdot)$ to both sides of
\eqref{eq.Pu.abc} and using the conservation law of mass
\eqref{cl.0}, one has
\begin{eqnarray}
\pa_t[A_{jj}(\{\FI-\FP\}u)+2c^u]+2\pa_jb_j^u&=&A_{jj}(R+G),\label{HM1}\\
\pa_t A_{jm}(\{\FI-\FP\}u)+\pa_jb_m^u+\pa_m b_j^u &=&
A_{jm}(R+G),\ \ j\neq m,\label{HM2}\\
\pa_t B_j (\{\FI-\FP\}u) +\pa_j c^u  &=& B_j (R+G),\label{HM3}
\end{eqnarray}
where \eqref{HM2} also holds for $j>m$ since it is symmetric for
$(j,m)$ due to the symmetry of $A_{jm}$. The main observation which
initially came from \cite{Guo-IUMJ} and later \cite{D-09PD} is that
for fixed $m$, from \eqref{HM1} and \eqref{HM2}, one can deduce
\begin{eqnarray}
 &\dis-\pa_t\left[\sum_{j}\pa_j A_{jm}(\{\FI-\FP\}u)+\frac{1}{2}
 \pa_m A_{mm}(\{\FI-\FP\}u)\right]-\De_x
 b_m-\pa_m\pa_m
 b_m\nonumber\\
 &\dis =\frac{1}{2}\sum_{j\neq m}\pa_m
 A_{jj}(R+G)-\sum_{j}\pa_j
 A_{jm}(R+G).\label{HM4.no}
\end{eqnarray}

\begin{remark}
It should be pointed out that the proof of \eqref{a.pro.pri.5} in
Lemma \ref{a.pro.pri} for the macroscopic dissipation is only based
on similar moment equations in presence of external forcing
corresponding to \eqref{cl.0}-\eqref{cl.2}, \eqref{cl.Poisson},
\eqref{HM1}-\eqref{HM3} and \eqref{HM4.no}. The derivation of these
moment equations was inspired by \cite{Guo-IUMJ} and firstly given
by \cite{Duan} and \cite{D-09PD} in the case of the Boltzmann
equation. They play a key role in the choice of the solution space
without time derivatives. Actually, the method of excluding time
derivatives in \cite{Duan,D-09PD,DY-09VPB} is much useful since it
will also be applied  in Section \ref{sec.ld} of this paper to the
study of the linearized system in order to obtain the time-decay
property of solutions.
\end{remark}

Next, as mentioned at the beginning of this section, we claim that
Theorem \ref{thm.cr} is indeed implied by the following proposition
in terms of perturbation $u$. Hence, the rest of this paper is
devoted to the proof of this proposition.

\begin{proposition}\label{prop.main}
Let $N\geq 4$ and $w(\xi)=(1+|\xi|^2)^{1/2}$. Assume  that
$f_0\equiv \FM+\sqrt{\FM}u_0\geq 0$ and $\|u_0\|_{H^N\cap L^2_w\cap
Z_1}$ is sufficiently small. Let $f\equiv \FM+\sqrt{\FM} u\geq 0$ be
the solution obtained in Proposition \ref{a.prop.exi} to the Cauchy
problem \eqref{VPB.p1}, \eqref{VPB.p2} and \eqref{VPB.pID} under the
assumption \eqref{ass.backg}. Then, $u$ enjoys the estimate with
algebraic decay rate in time:
\begin{equation}\label{prop.main.1}
 \left\|u(t)\right\|_{H^N}\leq C
 \left\|u_0\right\|_{H^N\cap Z_1}
  (1+t)^{-\frac{1}{4}}.
\end{equation}
Furthermore, under the following additional conditions on $u_0$, $u$
also enjoys some estimates with extra decay rates in time:

\medskip

\noindent{Case 1.} If
\begin{equation}\label{prop.main.ass}
   \FP_0 u_0\equiv 0
\end{equation}
holds for any $x\in\R^3$, then one has
\begin{equation}\label{prop.main.2}
 \left\|u(t)\right\|_{H^N}\leq C
 \left\|u_0\right\|_{H^N\cap Z_1}
  (1+t)^{-\frac{3}{4}}.
\end{equation}

\medskip

\noindent{Case 2.} Suppose that $\|u_0\|_{H^N_w\cap Z_1}$ is further
sufficiently small. Then, for any $0<\eps\leq 3/4$, there is
$\eta>0$ depending only on $\eps$ such that whenever
\begin{equation*}
    \left\|u_0\right\|_{H^N\cap Z_1}\leq \eta,
\end{equation*}
one has
\begin{equation}
 \dis  \left\|\{\FI-\FP_1\} u(t)\right\|_{H^N_w}+\|\na_x\FP_1 u(t)
 \|_{L^2_\xi(H^{N-1}_x)}\leq  C\left\|u_0\right\|_{ H^N_w\cap
    Z_1} (1+t)^{-\frac{3}{4}+\eps}.\label{prop.main.3}
\end{equation}

\end{proposition}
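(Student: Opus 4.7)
The plan is to represent the perturbation via Duhamel's formula $u(t)=e^{t\FB}u_0+\int_0^t e^{(t-s)\FB}G(s)\,ds$, where $e^{t\FB}$ denotes the linearized solution operator associated with (\ref{VPB.p1})--(\ref{VPB.p2}) and $G$ is the nonlinear source from (\ref{n.form}), and then to close a bootstrap by combining the $Z_1\cap H^N\to H^N$ time-decay supplied by Theorem \ref{thm.liDe} with the nonlinear energy estimates built from the macroscopic system (\ref{cl.0})--(\ref{cl.2}), (\ref{cl.Poisson}) and the high-order moment equations (\ref{HM1})--(\ref{HM4.no}). The bootstrap will be driven by a Lyapunov-type differential inequality that replaces the missing low-frequency macroscopic dissipation by a source that is itself controlled through the Duhamel formula.

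First I would set up an energy--dissipation pair $(\CE_N,\CD_N)$ with $\CE_N(t)\sim\|u(t)\|_{H^N}^2+\|\nabla_x\Phi(t)\|_{H^N}^2$ and $\CD_N$ controlling $\|\sqrt{\nu}\{\FI-\FP\}u\|_{H^N}^2+\|\nabla_x(a^u,b^u,c^u)\|_{H^{N-1}}^2+\|\nabla_x\Phi\|_{H^N}^2$, in the spirit of the a priori estimates recalled in the appendix from \cite{DY-09VPB}. The macroscopic dissipation is extracted by testing the balance laws against the moment functionals $A_{jm}(\{\FI-\FP\}u)$ and $B_j(\{\FI-\FP\}u)$ and invoking (\ref{HM4.no}) to recover $\Delta_x b^u$, while (\ref{cl.Poisson}) converts $L^2$ control of $a^u+3c^u$ into control of $\nabla_x\Phi$. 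Smallness then yields $\frac{d}{dt}\CE_N+\lambda\CD_N\le 0$. Crucially, $\CD_N$ controls only the \emph{derivatives} of the hyperbolic macroscopic piece $\FP_0 u$, not its $L^2$ norm, so Gronwall alone gives no decay.

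To overcome this I would upgrade the estimate to the Lyapunov-type inequality
\begin{equation*}
\tfrac{d}{dt}\CE_N(t)+\lambda\CE_N(t)\lesssim \|\FP_0 u(t)\|^2+\text{(cubic error)},
\end{equation*}
and control the right-hand side by plugging the Duhamel representation into $\|\FP_0 u\|$: the linear term contributes $(1+t)^{-1/4}\|u_0\|_{H^N\cap Z_1}$ by Theorem \ref{thm.liDe}, and the integral term is handled by splitting $[0,t]$ at $t/2$ and estimating $\|G(s)\|_{Z_1}\lesssim\|u(s)\|_{H^N}\|u(s)\|_{L^2}$ (here $\|G\|_{Z_1}$ absorbs $\Gamma(u,u)$ via $\sqrt{\FM}$ decay and the Poisson force terms via $\|\nabla_x\Phi\|_{L^2_x}\lesssim\|a^u+3c^u\|_{\dot H^{-1}_x}$). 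A bootstrap ansatz $\CE_N(t)\le C\|u_0\|_{H^N\cap Z_1}^2(1+t)^{-1/2}$ closes the argument and delivers (\ref{prop.main.1}). For Case 1, the hypothesis $\FP_0 u_0\equiv 0$ is propagated in time because $Q(f,f)$ has zero mass integral and the force contributions $\nabla_x\Phi\cdot\nabla_\xi u-\tfrac12\xi\cdot\nabla_x\Phi\,u$ integrate to zero against $\sqrt{\FM}$, so the same argument runs with the improved linear rate $(1+t)^{-3/4}$ from Theorem \ref{thm.liDe} and yields (\ref{prop.main.2}).

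Case 2 requires replacing $\CE_N$ by a weighted energy $\CE_{N,w}\sim\|u\|_{H^N_w}^2$, for which the coerciveness of $-\FL$ against the velocity weight $w$ compensates the streaming term $\tfrac12\xi\cdot\nabla_x\Phi\,u$; this is precisely why the extra smallness on $\|u_0\|_{H^N_w\cap Z_1}$ is needed. One then runs the scheme on the finer splitting $u=\FP_0 u\oplus\FP_1 u\oplus\{\FI-\FP\}u$, deriving a separate differential inequality for $\|\{\FI-\FP_1\}u\|_{H^N_w}^2+\|\nabla_x\FP_1 u\|_{H^{N-1}}^2$ whose sources only couple to the slower-decaying $\FP_0 u$. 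The main obstacle is located here: the quadratic self-interaction $\FP_0 u\cdot\FP_0 u$ inside $G$ decays only like $(1+t)^{-1/2}$, which is right at the borderline of integrability and blocks closure at the sharp rate $(1+t)^{-3/2}$. The resolution is to postulate an ansatz $X(t)\le M(1+t)^{-3/2+2\eps}$ for the weighted quantity, verify it by continuity, and absorb the borderline interaction using the smallness $\|u_0\|_{H^N\cap Z_1}\le\eta$; the controlled loss of $\eps$ in the exponent is exactly the source of the ``almost optimal'' rate in (\ref{prop.main.3}).
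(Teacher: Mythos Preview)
Your overall architecture—energy inequality plus Duhamel for the missing low-frequency piece, closed by a bootstrap—is exactly the paper's strategy. However, two concrete points are wrong and would prevent the argument from going through as written.

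\textbf{The missing dissipation is not $\|\FP_0 u\|^2$.} The Poisson coupling is precisely what distinguishes VPB from the pure Boltzmann equation: testing the momentum equation against $\nabla_x\Delta_x^{-1}(a^u+3c^u)$ (equivalently, looking at \eqref{lem.li.fr.a}) produces $\|a^u+3c^u\|^2$ in the dissipation \emph{with no loss of a derivative}. Thus $\|\FP_0 u\|^2$ \emph{is} in $\CD$, and so is every $\|\partial_x^\alpha\nabla_x\Phi\|^2$ with $|\alpha|\ge 1$. What is \emph{not} in $\CD$ are the zeroth-order parabolic piece $\|\FP_1 u\|^2\sim\|b^u\|^2+\|c^u\|^2$ and the electric-field energy $\|\nabla_x\Phi\|^2=\|\nabla_x\Delta_x^{-1}\FP_0 u\|^2$, which is a negative-order norm of $\FP_0 u$. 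The correct Lyapunov inequality is therefore
\[
\tfrac{d}{dt}\CE(u(t))+\lambda\CE(u(t))\le C\bigl(\|\FP_1 u(t)\|^2+\|\nabla_x\Phi(t)\|^2\bigr),
\]
and it is the term $\|\nabla_x\Phi\|^2$, decaying only like $(1+t)^{-1/2}$ via \eqref{thm.liDe.1} with $m=0$, that forces the slow rate $(1+t)^{-1/4}$ in \eqref{prop.main.1}. Your formulation both puts $\|\nabla_x\Phi\|_{H^N}^2$ in $\CD_N$ (impossible at order zero) and identifies the wrong term on the right-hand side.

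\textbf{Case 1: $\FP_0 u_0=0$ is \emph{not} propagated.} Your cancellation argument is correct for the collision and force terms, but you have forgotten the transport term $\xi\cdot\nabla_x u$: integrating against $\sqrt{\FM}$ gives the continuity equation $\partial_t(a^u+3c^u)+\nabla_x\cdot b^u=0$, so $\FP_0 u(t)\not\equiv 0$ for $t>0$ even though $\FP_0 u_0=0$. The paper does not use propagation. What actually happens is that your observation yields $\FP_0 G\equiv 0$ for the nonlinear source, so the Duhamel integral is fed only by data in $\mathrm{Ran}(\FI-\FP_0)$; together with $\FP_0 u_0=0$ this lets one invoke the faster linear rate \eqref{thm.liDe.2} for both the homogeneous and inhomogeneous parts, and the bootstrap then closes at $(1+t)^{-3/4}$.

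For Case~2 your diagnosis of the $\epsilon$-loss is right in spirit, but the paper's mechanism is slightly different from what you describe: the weighted \emph{high-order} functional $\CE^h_w\sim\|\{\FI-\FP_1\}u\|_{H^N_w}^2+\|\nabla_x\FP_1 u\|_{H^{N-1}}^2$ satisfies a Lyapunov inequality whose source is $\|\nabla_x\FP u\|^2$ (a \emph{gradient}), and it is this extra derivative that, through \eqref{thm.liDe.3} with $|\alpha|=1$, gives enough decay to close at $(1+t)^{-3/2+2\eps}$; the constant blows up as $\eps\to 0$ because the time convolution $\int_0^t(1+t-s)^{-3/4}(1+s)^{-1}ds$ is borderline.
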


\noindent{\bf Proof of Theorem \ref{thm.cr}:}\ By the definition
\eqref{def.per.f} for perturbation $u$, \eqref{thm.cr.2} and
\eqref{thm.cr.4} are equivalent with \eqref{prop.main.1} and
\eqref{prop.main.2}, respectively, and conditions \eqref{thm.cr.1}
and \eqref{thm.cr.5} coincide with those in Proposition
\ref{prop.main}, and the condition \eqref{thm.cr.3} is equivalent
with \eqref{prop.main.ass} since by \eqref{mmd.pu},
\begin{equation*}
    \FP_0 u_0=\int_{\R^3} \sqrt{\FM} u_0 d\xi \sqrt{\FM}=\int_{\R^3}(f_0-\FM)
    d\xi \sqrt{\FM}
\end{equation*}
holds for any $(x,\xi)\in \R^3\times\R^3$. Finally, \eqref{thm.cr.7}
is implied by \eqref{prop.main.3}. This can be seen from
\begin{equation*}
    \|\{\FI-\FP_1\} u(t)\|_{H^N_w}\sim   \|\{\FI-\FP\}
    u(t)\|_{H^N_w}+\|\FP_0u(t)\|_{L^2_\xi(H^N_x)},
\end{equation*}
and further from
\begin{eqnarray*}
\FP_0 u= \langle e_0, f-\FM\rangle e_0 \sqrt{\FM}, \ \ \FP_1
u=\sum_{j=1}^4\langle e_j,f\rangle e_j\sqrt{\FM},
\end{eqnarray*}
and
\begin{eqnarray*}
\{\FI-\FP\} u &=& u- \sum_{j=0}^4 \langle  e_j, \sqrt{\FM}u\rangle
  e_j\sqrt{\FM}\\
  &=&\frac{f-\FM}{\sqrt{\FM}}- \sum_{j=0}^4 \langle  e_j, f-\FM\rangle
  e_j\sqrt{\FM}\\
  &=&\frac{f-\sum_{j=0}^4\langle e_j,f\rangle
  e_j\FM}{\sqrt{\FM}}.
\end{eqnarray*}
Here, the orthonormal set  $\{e_j\}_{j=0}^4$ is defined by
\eqref{thm.cr.8}. Therefore, Theorem \ref{thm.cr} is proved.

\medskip

\begin{remark}\label{rem.AssTwo}
Let us explain that conditions of Proposition \ref{prop.main} make
sure those of Proposition \ref{a.prop.exi} are satisfied. Actually,
this follows from the fact that
\begin{equation*}
    \CE(u_0)\leq C \|u_0\|_{H^N\cap Z_1}^2,
\end{equation*}
where $\CE(u_0)$ is defined in \eqref{a.prop.exi.1}. It suffices to
verify
\begin{equation}\label{AssTwo.1}
   \|\na_x \De_x^{-1}\FP_0 u_0\|^2\leq C \|u_0\|_{L^2\cap Z_1}^2.
\end{equation}
In fact, it follows from the definition of $\FP_0$ and  an
interpolation inequality that
\begin{eqnarray*}
  \|\na_x \De_x^{-1}\FP_0 u_0\|^2 \sim  \|\na_x\Phi_0\|^2
  \leq
  C\|\rho_0\|_{L^2_x}^{\frac{2}{3}}\|\rho_0\|_{L^1_x}^{\frac{4}{3}},
\end{eqnarray*}
where $\Phi_0$ and $\rho_0$ are defined by
\begin{equation*}
     \Phi_0=-\frac{1}{4\pi|x|}\ast \rho_0,\ \ \rho_0=\int_{\R^3} \sqrt{\FM} u_0 d\xi.
\end{equation*}
Furthermore, one has
\begin{eqnarray*}
 \|\rho_0\|_{L^2_x} \leq \|u_0\|,\ \   \|\rho_0\|_{L^1_x} \leq \|u_0\|_{Z_1}.
\end{eqnarray*}
Then, \eqref{AssTwo.1} follows.
\end{remark}

\section{The linearized system}\label{sec.ld}

In this section, let us consider the Cauchy problem of the
linearized system with a nonhomogeneous source:
\begin{equation}\label{LCP.eq}
  \left\{\begin{array}{l}
\dis \pa_t u +\xi\cdot \na_x u-\na_x\Phi\cdot \xi\sqrt{\FM}= \FL u + h,\\[3mm]
\dis \De_x \Phi=\int_{\R^n}\sqrt{\FM} u d\xi,\ \ t>0,x\in\R^n,  \xi\in\R^n,\\[3mm]
\dis u|_{t=0}=u_0,\ \ x\in\R^n,
  \end{array}\right.
\end{equation}
where  $h=h(t,x,\xi)$ and $u_0=u_0(x,\xi)$ are given, the spatial
dimension $n\geq 1$ is supposed to be arbitrary in order to see how
it enters into the time-decay rate at the level of linearization,
and for simplicity we still use $\FM$ to denote the normalized
$n$-dimensional Maxwellian
\begin{equation*}
    \FM=\frac{1}{(2\pi)^{n/2}} e^{-|\xi|^2/2}.
\end{equation*}
Formally, the solution to the Cauchy problem \eqref{LCP.eq} can be
written as the mild form
\begin{equation}\label{ls.so}
    u(t)=e^{ t\FB }u_0+\int_0^t e^{(t-s) \FB } h(s)ds,
\end{equation}
where $e^{t\FB}$ denotes the solution operator to the Cauchy problem
of the linearized equation without source corresponding to
\eqref{LCP.eq} for $h\equiv 0$. The goal of this section is to show
that $e^{t\FB }$ has the proposed algebraic decay properties as time tends to
infinity. The idea of proofs is to make energy estimates for
pointwise time $t$ and frequency variable $k$ which corresponds to
the spatial variable $x$. To the end, for $1\leq q\leq 2$ and
integer $m$, set the index $ \si_{q,m}$ of the time-decay rate  by
\begin{equation*}
    \si_{q,m}=\frac{n}{2}\left(\frac{1}{q}-\frac{1}{2}\right)+\frac{m}{2}.
\end{equation*}
As mentioned  at the end of Remark \ref{rem.cr.1}, another main
result of this paper is stated in the following

\begin{theorem}\label{thm.liDe}
Let $1\leq q\leq 2$ and $n\geq 1$. Let $\FP$ and $\FP_0$ be defined
in \eqref{ld.decom}.

\medskip

\noindent (i) For any $\al, \al'$ with $\al'\leq \al$, and for any
$u_0$ satisfying $\pa_x^{\al}u_0\in L^2$ and $\pa_x^{\al'} u_0\in
Z_q$, one has
\begin{equation}
 \|\pa_x^\al e^{ t\FB }u_0\|+\|\pa_x^\al \na_x\De_x^{-1}\FP_0  e^{ t\FB }u_0\|
 \leq C (1+t)^{-\si_{q,m-1}}(\|\pa_x^{\al'}u_0\|_{Z_q}+\|\pa_x^\al
    u_0\|),\label{thm.liDe.1}
\end{equation}
and
\begin{eqnarray}
  &&\dis \|\pa_x^\al e^{ t\FB }\{\FI-\FP_0\}u_0\|+\|\pa_x^\al \na_x\De_x^{-1}\FP_0
  e^{ t\FB }\{\FI-\FP_0\}u_0\|\nonumber\\
  &&\hspace{2cm}\leq C (1+t)^{-\si_{q,m}}(\|\pa_x^{\al'}\{\FI-\FP_0\}u_0\|_{Z_q}
  +\|\pa_x^\al
    \{\FI-\FP_0\}u_0\|),\label{thm.liDe.2}
\end{eqnarray}
for $t\geq 0$ with $m=|\al-\al'|$, where $C$ is a positive constant
depending only on $n,m,q$.

\medskip

\noindent (ii) Similarly, for any $\al, \al'$ with $\al'\leq \al$,
and for any $h$ satisfying  $\nu(\xi)^{-1/2}\pa_x^{\al}h(t)\in L^2$
and $\nu(\xi)^{-1/2}\pa_x^{\al'} h(t)\in Z_q$ for $t\geq 0$, one has
\begin{eqnarray}
&&\dis \left\|\pa_x^\al\int_0^t e^{ (t-s)\FB  }
\{\FI-\FP\}h(s)ds\right\|^2+\left\|\pa_x^\al\na_x\De_x^{-1}\FP_0
\int_0^t e^{ (t-s)\FB  }
\{\FI-\FP\}h(s)ds\right\|^2\nonumber\\
&& \leq
    C\int_0^t (1+t-s)^{-2\si_{q,m}}\nonumber\\
 &&\hspace{1cm}\dis(\|\nu^{-1/2}\pa_x^{\al'}\{\FI-\FP\}h(s)\|_{Z_q}^2+\|\nu^{-1/2}\pa_x^\al
    \{\FI-\FP\}h(s)\|^2)ds,\label{thm.liDe.3}
\end{eqnarray}
for $t\geq 0$ with $m=|\al-\al'|$, where $C$ is a positive constant
depending only on $n,m,q$.
\end{theorem}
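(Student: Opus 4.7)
The plan is to Fourier-transform the linearized system in $x$ and, at each frequency $k\in\R^n$, construct a Lyapunov functional whose dissipation forces exponential decay at a $k$-dependent rate $\rho(k)$. Applying $\CF_{x\to k}$ and using the Poisson relation $\hat\Phi(t,k)=-|k|^{-2}\hat\rho(t,k)$ with $\hat\rho=\langle\sqrt{\FM},\hat u\rangle$, the transport equation becomes
\begin{equation*}
\pa_t\hat u+ik\cdot\xi\,\hat u+ik|k|^{-2}\hat\rho\cdot\xi\sqrt{\FM}=\FL\hat u.
\end{equation*}
Taking the complex $L^2_\xi$-inner product with $\hat u$, the streaming term is purely imaginary, and the mass identity $\pa_t\hat\rho+ik\cdot\hat b=0$ rewrites the Poisson contribution $\mathrm{Re}(ik\hat\Phi\cdot\overline{\hat b})$ exactly as $-\tfrac12\pa_t(|k|^{-2}|\hat\rho|^2)$. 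Combined with local coercivity of $\FL$, this yields the natural identity
\begin{equation*}
\tfrac12\pa_t\bigl(\|\hat u\|_{L^2_\xi}^2+|k|^{-2}|\hat\rho|^2\bigr)+\lambda\,\|\{\FI-\FP\}\hat u\|_\nu^2\leq 0,
\end{equation*}
already showing that the correct free energy at frequency $k$ must contain the singular Poisson piece $|k|^{-2}|\hat\rho|^2=|\widehat{\na_x\Phi}|^2$.

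Next I would extract dissipation on the macroscopic modes $(\hat a,\hat b,\hat c)$, adapting the hypocoercivity/Kawashima construction to the Fourier versions of the moment identities \eqref{cl.0}--\eqref{cl.2}, \eqref{HM1}--\eqref{HM3} and \eqref{HM4.no}. Concretely, I form interaction functionals of the type $\mathrm{Re}(ik\hat a\mid\hat b)$, $\mathrm{Re}(ik\hat c\mid\hat b)$ and the $\hat b$-dissipation associated with \eqref{HM4.no}, each tempered by $(1+|k|^2)^{-1}$ so as to keep the high-frequency contribution harmless; the extra $-ik\hat\Phi$ term in the momentum equation then contributes an additional positive amount that, via the algebraic identity $|k|^2|\hat\Phi|^2=|\hat\rho|^2$, recaptures the singular Poisson piece of the free energy. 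Adding these with small enough coefficients to the natural identity yields a total functional $\CE(t,k)\sim\|\hat u\|_{L^2_\xi}^2+|k|^{-2}|\hat\rho|^2$ with
\begin{equation*}
\pa_t\CE(t,k)+\lambda\,\rho(k)\,\CE(t,k)\leq 0,\qquad \rho(k)\sim\frac{|k|^2}{1+|k|^2},
\end{equation*}
hence the pointwise bound $\CE(t,k)\leq e^{-\lambda\rho(k)t}\CE(0,k)$.

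Finally I would pass to physical space. Writing $\|\pa_x^\al u\|^2=\int|k|^{2|\al|}\|\hat u\|^2_{L^2_\xi}dk$ and $\|\pa_x^\al\na_x\De_x^{-1}\FP_0 u\|^2=\int|k|^{2|\al|-2}|\hat\rho|^2dk$, I split into $|k|\leq 1$ and $|k|\geq 1$. On $|k|\geq 1$ one has $\rho(k)\geq 1/2$ and pure exponential decay controlled by the $L^2$ data. On $|k|\leq 1$, $\rho(k)\sim|k|^2$, and the Hausdorff--Young-type bound $\|\widehat{\pa_x^{\al'}u_0}\|_{L^{q'}_k L^2_\xi}\leq\|\pa_x^{\al'}u_0\|_{Z_q}$ combined with H\"older in $k$ produces the polynomial factor $(1+t)^{-\sigma_{q,m}}$ for the $\hat u$ piece. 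The companion integral for $\|\na_x\De_x^{-1}\FP_0 u\|^2$ carries the additional $|k|^{-2}$ at low frequency, which in $n=3$ costs one more power $t^{1/2}$; this is exactly the drop from $\sigma_{q,m}$ in \eqref{thm.liDe.2} to $\sigma_{q,m-1}$ in \eqref{thm.liDe.1}, and the faster rate in \eqref{thm.liDe.2} is recovered because $\{\FI-\FP_0\}u_0$ has $\hat\rho_0\equiv 0$, so the singular initial contribution to $\CE(0,k)$ is absent. The inhomogeneous bound \eqref{thm.liDe.3} then follows by Duhamel \eqref{ls.so}: since $h$ is restricted to $\{\FI-\FP\}$, no Poisson singularity is generated, and the $\nu^{-1/2}$-weight on $h$ matches the $\nu$-weighted microscopic dissipation in the Lyapunov estimate. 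The main obstacle is the hypocoercivity step: choosing the interaction functionals and their coefficients so that the total dissipation dominates $\rho(k)\,\CE(t,k)$ uniformly in $k$ down to $k=0$ despite the non-local Poisson coupling $|k|^{-2}$, while preserving the norm equivalence $\CE\sim\|\hat u\|^2+|k|^{-2}|\hat\rho|^2$.
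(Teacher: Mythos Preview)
Your proposal is correct and follows essentially the same route as the paper: Fourier transform in $x$, the natural $L^2_\xi$ identity yielding $\|\hat u\|_{L^2_\xi}^2+|k|^{-2}|\hat\rho|^2$ as the base free energy (the paper's Lemma~\ref{lem.li.u}), a Kawashima-type interaction functional built from the Fourier moment equations and tempered by $(1+|k|^2)^{-1}$ to recover macroscopic dissipation (the paper's Lemma~\ref{lem.li.fr}), and then the combined Lyapunov inequality $\pa_t E(\hat u)+\la\frac{|k|^2}{1+|k|^2}E(\hat u)\le C\|\nu^{-1/2}\{\FI-\FP\}\hat h\|_{L^2_\xi}^2$ followed by the low/high frequency splitting with H\"older and Hausdorff--Young. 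Your identification of why the Poisson contribution in the momentum equation yields undamped dissipation on $|\hat\rho|^2$ (hence controls the singular piece $|k|^{-2}|\hat\rho|^2$ at the rate $\rho(k)$), and of why $\FP_0 u_0=0$ removes the slow term, matches the paper's argument exactly.
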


\begin{remark}
In the case of the Boltzmann equation \cite{Ukai-1974} or the
Navier-Stokes-Poisson system \cite{LMZ-NSP}, some similar algebraic
time-decay estimates in Theorem \ref{thm.liDe} were obtained on the
basis of the spectral analysis of the solution semigroup. However,
so far there has not been known results applying the direct spectral
analysis as in \cite{Ukai-1974} to the study of the VPB system.
Here, the proof of Theorem \ref{thm.liDe} that we shall show can
provide a robust method to get the time-decay estimates in $L^2$
space for not only the VPB system but also some other kinetic models
such as the Boltzmann equation and Landau equation, relativistic or
non-relativistic, which is a future research goal in the general
framework as in \cite{Vi}.
\end{remark}

\begin{remark}\label{rem.extension}
In the case when the spatial domain is a torus $\T^n$,
Glassey-Strauss \cite{GS-TTSP} and Mouhot-Neumann \cite{MN} obtained
the exponential time-decay rate
\begin{equation*}
    \left\|e^{t\FB} \{\FI-\FP\} u_0\right\|_{X}\leq C e^{-\la
    t}\|u_0\|_X,
\end{equation*}
where $X=L^2(\T^n\times\R^n)$ in \cite{GS-TTSP} and
$X=H^1(\T^n\times\R^n)$ in \cite{MN}. See also \cite{Vi,DMS,D-09PD}
for the other kinetic models on torus. Actually, the proof of
Theorem \ref{thm.liDe} can be modified in a simple way so that the
above inequality with exponential rates is recovered. We shall
further explain this point at the end of this section; see Theorem
\ref{thm.torus}.
\end{remark}

To prove Theorem \ref{thm.liDe}, let $u(t)$, formally defined by
\eqref{ls.so}, be the solution to the linearized non-homogeneous
Cauchy problem \eqref{LCP.eq}, where $u_0(x,\xi)$ and $h(t,x,\xi)$
with $\FP h \equiv 0$ are given.  We decompose $u$ as
\begin{equation}\label{ld.decom}
\left\{\begin{array}{l}
\dis u(t,x,\xi)= \FP u\oplus\{\FI-\FP\}u,\\[3mm]
\dis \FP u=\FP_0 u\oplus \FP_1u\equiv \{a^u+b^u\cdot \xi+c^u|\xi|^2\}\sqrt{\FM},\\[3mm]
\dis \FP_0 u=(a^u+nc^u)\sqrt{\FM},\\[3mm]
\dis \FP_1 u=[b^u\cdot \xi+c^u(|\xi|^2-n)]\sqrt{\FM},
\end{array}\right.
\end{equation}
where $a^u,b^u,c^u$ are the macro moment functions of $u$ given by
\begin{eqnarray*}
  a^u &=& \frac{1}{2}\int_{\R^3}[(n+2)-|\xi|^2]\sqrt{\FM} ud\xi,\\
  b^u &=& \int_{\R^3}\xi\sqrt{\FM} ud\xi,\\
  c^u &=& \frac{1}{2n}\int_{\R^3}(|\xi|^2-n)\sqrt{\FM} ud\xi,
\end{eqnarray*}
which are the n dimensional generalizations of \eqref{def.a}, \eqref{def.b},
\eqref{def.c} when $n=3$. Notice that although $a^u,b^u$ and $c^u$
also depend on the spatial dimension, we used the same notations as
in Section  \ref{sec.MMD} for simplicity.  Then, from the same
procedure as in Section \ref{sec.MMD}, one has the macroscopic
balance laws satisfied by $a^u, b^u,c^u$:
\begin{eqnarray}
&& \pa_t (a^u+nc^u) + \na_x \cdot b^u =0,\label{Mleq.0}\\
&&\dis \pa_t b^u_j +\pa_j (a^u+nc^u)+2\pa_j c^u
+\sum_{m} \pa_m A_{jm}(\{\FI- \FP\}u)-\pa_j\Phi=0,\label{Mleq.1}\\
&&\pa_t c^u+\frac{1}{n}\na_x\cdot b^u +\frac{1}{2n}\sum_{j}\pa_j
B_j(\{\FI- \FP\}u)=0,\label{Mleq.2}\\
&&\De_x\Phi=a^u+nc^u,\label{Mleq.p}
\end{eqnarray}
and
\begin{eqnarray}
&&\pa_t[A_{jj}(\{\FI- \FP\}u)+2c^u]+2\pa_j b^u_j =
A_{jj}(R+h),\label{Mleq.aii}\\
&& \pa_tA_{jm}(\{\FI- \FP\}u)+\pa_jb^u_m +\pa_m b^u_j=A_{jm}(R+h),\
j\neq m,\label{Mleq.aij}\\
&& \pa_t B_j(\{\FI- \FP\}u)+\pa_j c^u=B_j(R+h),\label{Mleq.bi}
\end{eqnarray}
for $1\leq j,m\leq n$. Here, the velocity moment functions
$A_{jm}(\cdot)$ and $B_j(\cdot)$ are  given by
\begin{eqnarray*}
  A_{jm}(u) = \langle(\xi_j\xi_m-1)\sqrt{\FM}, u\rangle,\ \
  B_j(u)=\langle[|\xi|^2-(n+2)]\xi_j\sqrt{\FM},
  u\rangle,
\end{eqnarray*}
where these moment functions correspond to \eqref{AB.moment} for
$n=3$, and we again used the same notations as in Section
\ref{sec.MMD} for simplicity. $R$ has the same form with
\eqref{l.form}, given by
\begin{eqnarray}
  R &=& -\xi\cdot \na_x \{\FI-
  \FP\}u+\FL\{\FI-\FP\}u.\label{def.ll}
\end{eqnarray}
Notice that the source term $h$ does not appear in the first $n+2$
equations \eqref{Mleq.0}-\eqref{Mleq.1} due to $\FP h =0$.
Furthermore, similar to derive \eqref{HM4} from \eqref{HM1} and
\eqref{HM2}, it follows from \eqref{Mleq.aii} and \eqref{Mleq.aij}
that
\begin{eqnarray}
 &\dis-\pa_t\left[\sum_{j}\pa_j A_{jm}(\{\FI-\FP\}u)+\frac{1}{2}\pa_m A_{mm}
 (\{\FI-\FP\}u)\right]-\De_x b_m^u
 -\pa_m\pa_m
 b_m^u\nonumber\\
 &\dis =\frac{1}{2}\sum_{j\neq m}\pa_m
 A_{jj}(R+h)-\sum_{j}\pa_j
 A_{jm}(R+h).\label{HM4}
\end{eqnarray}

\begin{lemma}\label{lem.li.fr}
There is a free energy functional $E_{free}(\widehat{u}(t,k))$ which
is local in the time and frequency and takes the form of
\begin{eqnarray}
&& E_{free}(\widehat{u}(t,k))  \nonumber\\
&&= \kappa_1 \sum_m \left(\sum_j
  \frac{i k_j}{1+|k|^2}
A_{jm}(\{\FI-\FP\}\widehat{u})+\frac{1}{2} \frac{i k_m}{1+|k|^2}
A_{mm}(\{\FI-\FP\}\widehat{u})\mid-b^{\widehat{u}}_m\right)\nonumber\\
&&\ \ \ +\kappa_1 \sum_{j}\left( B_j (\{\FI-\FP\}\widehat{u})\mid
\frac{i k_j}{1+|k|^2} c^{\widehat{u}}\right)\nonumber\\
&&\ \ \ +\sum_m \left(b^{\widehat{u}}_m\mid\frac{ i k_m}{1+|k|^2}
(a^{\widehat{u}}+nc^{\widehat{u}})\right)\label{lem.li.fr.1}
\end{eqnarray}
for some constant $\kappa_1>0$, such that one has
\begin{eqnarray}
&\dis \pa_t \rmre\, E_{free}(\widehat{u}(t,k))+ \la
\frac{|k|^2}{1+|k|^2}\left(|b^{\widehat{u}}|^2+|c^{\widehat{u}}|^2
\right)+\la |a^{\widehat{u}}+nc^{\widehat{u}}|^2\nonumber\\
&\dis \leq C\left(\|\{\FI-\FP\}\widehat{u}\|_{L^2_\xi}^2
+\|\nu^{-1/2}\{\FI-\FP\}\widehat{h}\|_{L^2_\xi}^2\right)\label{lem.li.fr.2}
\end{eqnarray}
for any $t\geq 0$ and $k\in\R^n$.
\end{lemma}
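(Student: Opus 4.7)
The strategy is to build $E_{free}(\widehat{u}(t,k))$ as a sum of three cross-term inner products in Fourier space, each engineered so that its time derivative, combined with the macroscopic balance laws \eqref{Mleq.0}--\eqref{Mleq.bi} and the derived identity \eqref{HM4}, produces exactly one of the three desired dissipation quantities: $\frac{|k|^2}{1+|k|^2}|b^{\widehat{u}}|^2$, $\frac{|k|^2}{1+|k|^2}|c^{\widehat{u}}|^2$, and $|a^{\widehat{u}}+nc^{\widehat{u}}|^2$. The role of the Poisson equation \eqref{Mleq.p} is crucial here: since $-|k|^2\widehat{\Phi}=a^{\widehat{u}}+nc^{\widehat{u}}$, the potential term in \eqref{Mleq.1} combines with the pressure term as
\[
ik_j(a^{\widehat{u}}+nc^{\widehat{u}})-ik_j\widehat{\Phi}=\frac{ik_j(1+|k|^2)}{|k|^2}(a^{\widehat{u}}+nc^{\widehat{u}}),
\]
which motivates the weight $\frac{ik_j}{1+|k|^2}$ appearing throughout \eqref{lem.li.fr.1}.

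I would treat the three main dissipation terms separately. Differentiating the third piece $\sum_m(b^{\widehat{u}}_m\mid\frac{ik_m}{1+|k|^2}(a^{\widehat{u}}+nc^{\widehat{u}}))$ in $t$, substituting for $\partial_t b^{\widehat{u}}_m$ from \eqref{Mleq.1}, and taking the real part yields $-\sum_m\frac{k_m^2}{|k|^2}|a^{\widehat{u}}+nc^{\widehat{u}}|^2=-|a^{\widehat{u}}+nc^{\widehat{u}}|^2$, together with an error of the form $\frac{|k\cdot b^{\widehat{u}}|^2}{1+|k|^2}$ arising when the time derivative instead hits $a^{\widehat{u}}+nc^{\widehat{u}}$ and is replaced by $-ik\cdot b^{\widehat{u}}$ via \eqref{Mleq.0}. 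Differentiating the second piece and using \eqref{Mleq.bi} gives $-\kappa_1\frac{|k|^2}{1+|k|^2}|c^{\widehat{u}}|^2$. For the $b^{\widehat{u}}$-dissipation I would invoke the Fourier form of \eqref{HM4}, which after differentiating the first piece of $E_{free}$ in $t$ contributes
\[
-\kappa_1\sum_m\frac{|k|^2+k_m^2}{1+|k|^2}|b^{\widehat{u}}_m|^2,
\]
dominating $\kappa_1\frac{|k|^2}{1+|k|^2}|b^{\widehat{u}}|^2$ after summation in $m$.

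The remaining terms are cross terms of three types: (i) reverse time-derivative contributions in which $\partial_t$ hits the macroscopic factor of each inner product rather than the microscopic one, producing products of macroscopic quantities which Young's inequality absorbs into the three main dissipations; (ii) linear moment contributions $A_{jm}(R)$, $B_j(R)$ with $R$ given by \eqref{def.ll}, which by standard moment bounds on $\sqrt{\FM}$ and the $\nu$-coercivity of $-\FL$ are dominated by $|k|\|\{\FI-\FP\}\widehat{u}\|_{L^2_\xi}+\|\{\FI-\FP\}\widehat{u}\|_{L^2_\xi}$; and (iii) source contributions $A_{jm}(\widehat{h})$, $B_j(\widehat{h})$, bounded by $\|\nu^{-1/2}\{\FI-\FP\}\widehat{h}\|_{L^2_\xi}$ via Cauchy--Schwarz against $(\xi_j\xi_m-1)\sqrt{\FM}\nu^{1/2}$ and $(|\xi|^2-n-2)\xi_j\sqrt{\FM}\nu^{1/2}$, each of which lies in $L^2_\xi$. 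After a Young-type splitting with small parameter, all cross terms are absorbed into the three dissipations plus a harmless remainder of the form $C(\|\{\FI-\FP\}\widehat{u}\|_{L^2_\xi}^2+\|\nu^{-1/2}\{\FI-\FP\}\widehat{h}\|_{L^2_\xi}^2)$, which matches the right-hand side of \eqref{lem.li.fr.2}.

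The main obstacle will be the calibration of $\kappa_1$. The first piece is the sole source of $b^{\widehat{u}}$-dissipation, so $\kappa_1$ must be fixed large enough to dominate cross terms that drain the $b^{\widehat{u}}$ budget from the third piece (notably $\frac{|k\cdot b^{\widehat{u}}|^2}{1+|k|^2}$); on the other hand, $\kappa_1$ must remain small enough that the cross terms generated by the first piece itself, which couple $b^{\widehat{u}}$ to $a^{\widehat{u}}+nc^{\widehat{u}}$, to $c^{\widehat{u}}$, and to $A_{jm}(\{\FI-\FP\}\widehat{u})$, do not swamp the $O(1)$ dissipations of the second and third pieces. A careful two-parameter bookkeeping (choose a small Young parameter $\eta$, then $\kappa_1$ of intermediate size depending on $\eta$) should close the estimate and yield \eqref{lem.li.fr.2} with some $\la>0$ independent of $k$ and $t$.
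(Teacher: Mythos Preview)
Your proposal is correct and follows essentially the same approach as the paper: derive three separate inequalities capturing $b^{\widehat{u}}$-, $c^{\widehat{u}}$-, and $(a^{\widehat{u}}+nc^{\widehat{u}})$-dissipation from the Fourier forms of \eqref{HM4}, \eqref{Mleq.bi}, and \eqref{Mleq.1} respectively, then take a linear combination with $\kappa_1$ large and the Young parameters small. The only presentational difference is that the paper works out the three estimates \emph{without} the $(1+|k|^2)^{-1}$ weight and introduces it only at the combination step, and it uses three independent Young parameters $\delta_1,\delta_2,\delta_3$ (first fixing $\delta_3$, then $\kappa_1$ large, then $\delta_1,\delta_2$ small depending on $\kappa_1$) rather than your single $\eta$ with $1\ll\kappa_1\ll 1/\eta$; both schemes close.
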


\begin{proof}
We shall make estimates on $b^{\widehat{u}},c^{\widehat{u}}$ and
$a^{\widehat{u}}+nc^{\widehat{u}}$ individually and then take the
proper linear combination to deduce the desired free energy
inequality \eqref{lem.li.fr.2}. Firstly, notice that
\begin{equation*}
  \CF a^u=a^{\CF u},\ \  \CF b^u=b^{\CF u},\ \ \CF c^u=c^{\CF u},
\end{equation*}
and likewise for the high-order moment functions $A_{jm}(\cdot)$ and
$B_j(\cdot)$.

\medskip

\noindent{\it Estimate on $b^{\widehat{u}}$.}  We claim that for any
$0<\de_1<1$, it holds that
\begin{eqnarray}
&&\pa_t {\rmre}\sum_m \left(\sum_j i k_j
A_{jm}(\{\FI-\FP\}\widehat{u})+\frac{1}{2}i k_m
A_{mm}(\{\FI-\FP\}\widehat{u})\mid-b^{\widehat{u}}_m\right)+(1-\de_1)|k|^2|b^{\widehat{u}}|^2\nonumber \\
&&\leq \de_1
(1+|k|^2)|a^{\widehat{u}}+nc^{\widehat{u}}|^2+\de_1|k|^2|c^{\widehat{u}}|^2\nonumber \\
&&\ \ \
+\frac{C}{\de_1}(1+|k|^2)\left(\|\{\FI-\FP\}\widehat{u}\|_{L^2_\xi}^2
+\|\nu^{-1/2}\{\FI-\FP\}\widehat{h}\|_{L^2_\xi}^2\right).\label{lem.li.fr.b01}
\end{eqnarray}
In fact, the Fourier transform of \eqref{HM4} gives
\begin{eqnarray*}
 &\dis-\pa_t\left[\sum_{j}i k_j A_{jm}(\{\FI-\FP\}\widehat{u})
 +\frac{1}{2}i
 k_m A_{mm}(\{\FI-\FP\}\widehat{u})\right]+|k|^2b_m^{\widehat{u}}
 +k_m^2
 b_m^{\widehat{u}}\nonumber\\
 &\dis =\frac{1}{2}\sum_{j\neq m}i k_m A_{jj}(\widehat{R}+\widehat{h})-\sum_{j}i
 k_j
 A_{jm}(\widehat{R}+\widehat{h}).
\end{eqnarray*}
Taking further the complex inner product with $b_m^{\widehat{u}}$
gives
\begin{eqnarray}
 &&\dis\pa_t\left(\sum_{j}i k_j A_{jm}(\{\FI-\FP\}\widehat{u})
 +\frac{1}{2}i
 k_m A_{mm}(\{\FI-\FP\}\widehat{u})\mid-b_m^{\widehat{u}}\right)+(|k|^2+k_m^2)
 |b_m^{\widehat{u}}|^2\nonumber\\
 &&\dis =\left(\frac{1}{2}\sum_{j\neq m}i k_m A_{jj}(\widehat{R}+\widehat{h})-\sum_{j}i
 k_j
 A_{jm}(\widehat{R}+\widehat{h})\mid b_m^{\widehat{u}}\right)  \nonumber\\
 &&\ \ \ +\left(\sum_{j}i k_j A_{jm}(\{\FI-\FP\}\widehat{u})
 +\frac{1}{2}i
 k_m A_{mm}(\{\FI-\FP\}\widehat{u})\mid
 -\pa_tb_m^{\widehat{u}}\right)=I_1+I_2.\label{lem.li.fr.p02}
\end{eqnarray}
$I_1$ is bounded by
\begin{eqnarray*}
  I_1 &\leq & \de_1 |k|^2|b_m^{\widehat{u}}|^2+
  \frac{C}{\de_1}\sum_{jm}(|A_{jm}(\widehat{R})|^2+|A_{jm}(\widehat{h})|^2).
\end{eqnarray*}
For $I_2$, one can use the Fourier transforms of \eqref{Mleq.1} and
\eqref{Mleq.p}:
\begin{eqnarray}
&\dis \pa_t b^{\widehat{u}}_j +i k_j
[a^{\widehat{u}}+(n+2)c^{\widehat{u}}]
+\sum_{m}i k_m A_{jm}(\{\FI- \FP\}\widehat{u})-i k_j\widehat{\Phi}=0,\label{Mleq.1.F}\\
&\dis
-|k|^2\widehat{\Phi}=a^{\widehat{u}}+nc^{\widehat{u}}\label{Mleq.p.F}
\end{eqnarray}
to estimate it as
\begin{equation*}
   I_2 \leq \de_1 (1+|k|^2)
  |a^{\widehat{u}}+nc^{\widehat{u}}|^2+\de_1|k|^2
  |c^{\widehat{u}}|^2+\frac{C}{\de_1}(1+|k|^2)\sum_{jm}|A_{jm}(\{\FI-
  \FP\}\widehat{u})|^2.
\end{equation*}
On the other hand, notice from \eqref{def.ll} that
\begin{equation*}
\widehat{R}=-\xi\cdot k\{\FI-\FP\}\widehat{u}+\FL
\{\FI-\FP\}\widehat{u}
\end{equation*}
which implies
\begin{eqnarray*}
 |A_{jm}(\widehat{R})|^2&\leq &
 C(1+|k|^2)\|\{\FI-\FP\}\widehat{u}\|_{L^2_\xi}^2.
\end{eqnarray*}
Similarly it holds that
\begin{equation*}
|A_{jm}(\widehat{h})|^2\leq C\|\nu^{-1/2}
\{\FI-\FP\}\widehat{h}\|_{L^2_\xi}^2,\ \ |A_{jm}(\{\FI-
  \FP\}\widehat{u})|^2\leq C\|\{\FI-\FP\}\widehat{u}\|_{L^2_\xi}^2.
\end{equation*}
Thus, \eqref{lem.li.fr.b01} follows from taking the real part of
\eqref{lem.li.fr.p02} and plugging the estimates of $I_1,I_2$ into
it.

\medskip

\noindent{\it Estimate on $c^{\widehat{u}}$.}  We claim that for any
$0<\de_2<1$, it holds that
\begin{eqnarray}
&&\pa_t \rmre \sum_{j}\left( B_j (\{\FI-\FP\}\widehat{u})\mid i k_j
c^{\widehat{u}}\right)+(1-\de_2)|k|^2 |c^{\widehat{u}}|^2  \nonumber\\
&& \leq \de_2
|k|^2|b^{\widehat{u}}|^2+\frac{C}{\de_2}(1+|k|^2)\|\{\FI-\FP\}\widehat{u}\|_{L^2_\xi}^2
+\frac{C}{\de_2}\|\nu^{-1/2}\{\FI-\FP\}\widehat{h}\|_{L^2_\xi}^2.\label{lem.li.fr.c01}
\end{eqnarray}
In fact, similarly as before, from the Fourier transform of
\eqref{Mleq.bi}
\begin{equation*}
\pa_t B_j(\{\FI- \FP\}\widehat{u})+i k_j
c^{\widehat{u}}=B_j(\widehat{R}+\widehat{h}),
\end{equation*}
one can get
\begin{eqnarray}
&\dis \pa_t \left(B_j(\{\FI- \FP\}\widehat{u})\mid i k_j
c^{\widehat{u}}\right) +|k_j|^2
|c^{\widehat{u}}|^2  \nonumber\\
&\dis =\left(B_j(\widehat{R}+\widehat{h})\mid i k_j
c^{\widehat{u}}\right)+\left(B_j(\{\FI- \FP\}\widehat{u})\mid i k_j
\pa_t c^{\widehat{u}}\right)=I_3+I_4.\label{lem.li.fr.c02}
\end{eqnarray}
$I_3$ is bounded by
\begin{eqnarray*}
  I_3 &\leq & \de_2|k_j|^2| c^{\widehat{u}}|^2+
  \frac{C}{\de_2}\sum_j(|B_j(\widehat{R})|^2+|B_j(\widehat{h})|^2),
\end{eqnarray*}
and from the Fourier transform of \eqref{Mleq.2}
\begin{equation*}
    \pa_t c^{\widehat{u}}+\frac{1}{n}i k \cdot b^{\widehat{u}} +\frac{1}{2n}\sum_{j}i
    k_j
B_j(\{\FI- \FP\}\widehat{u})=0,
\end{equation*}
$I_4$ is bounded by
\begin{equation*}
    I_4\leq\frac{\de_2}{n}|k|^2| b^{\widehat{u}}
    |^2+\frac{C}{\de_2}|k|^2\sum_j |B_j(\{\FI- \FP\}\widehat{u})|^2.
\end{equation*}
Notice that similar to $A_{jm}$, it holds that
\begin{eqnarray*}
&\dis |B_{j}(\widehat{R})|^2\leq
 C(1+|k|^2)\|\{\FI-\FP\}\widehat{u}\|_{L^2_\xi}^2,\\
&\dis |B_{j}(\widehat{h})|^2\leq C\|\nu^{-1/2}
\{\FI-\FP\}\widehat{h}\|_{L^2_\xi}^2,\ \ |B_{j}(\{\FI-
  \FP\}\widehat{u})|^2\leq C\|\{\FI-\FP\}\widehat{u}\|_{L^2_\xi}^2.
\end{eqnarray*}
Then, \eqref{lem.li.fr.c01} follows from \eqref{lem.li.fr.c02} by
taking summation over $1\leq i\leq n$, taking the real part and then
applying the estimates of $I_3$ and $I_4$.
\medskip

\noindent{\it Estimate on $a^{\widehat{u}}+nc^{\widehat{u}}$.} We
claim that for any $0<\de_3<1$, it holds that
\begin{eqnarray}
&&\pa_t \rmre \sum_m \left(b^{\widehat{u}}_m\mid i k_m
(a^{\widehat{u}}+nc^{\widehat{u}})\right)+(1-\de_3)(1+|k|^2)|a^{\widehat{u}}+nc^{\widehat{u}}|^2  \nonumber\\
&&\leq
|k|^2|b^{\widehat{u}}|^2+\frac{C}{\de_3}|k|^2|c^{\widehat{u}}|^2
+\frac{C}{\de_3}|k|^2\|\{\FI-\FP\}\widehat{u}\|_{L^2_\xi}^2.\label{lem.li.fr.a}
\end{eqnarray}
In fact, by taking the complex inner product with $i k_j
(a^{\widehat{u}}+nc^{\widehat{u}}) $ and then taking summation over
$1\leq j\leq n$, it follows from \eqref{Mleq.1.F} that
\begin{eqnarray}
 &&\dis \pa_t \sum_j( b^{\widehat{u}}_j\mid i k_j
(a^{\widehat{u}}+nc^{\widehat{u}}) ) +|k|^2
|a^{\widehat{u}}+nc^{\widehat{u}}|^2+ \sum_j(-i k_j
\widehat{\Phi}\mid i k_j (a^{\widehat{u}}+nc^{\widehat{u}}))  \nonumber\\
&&\dis=(-2i k_j c^{\widehat{u}}\mid i k_j
(a^{\widehat{u}}+nc^{\widehat{u}}) )+ \sum_{jm}(-i
k_jA_{jm}(\{\FI-\FP\}\widehat{u})\mid i k_j
(a^{\widehat{u}}+nc^{\widehat{u}}))  \nonumber\\
&&\ \ \ + \sum_j( b^{\widehat{u}}\mid i k_j \pa_t
(a^{\widehat{u}}+nc^{\widehat{u}}) ).\label{lem.li.fr.a01}
\end{eqnarray}
Using \eqref{Mleq.p.F}, one has
\begin{eqnarray*}
 \sum_j(-i k_j
\widehat{\Phi}\mid i k_j (a^{\widehat{u}}+nc^{\widehat{u}}))&=&
\sum_j(k_j^2 \frac{a^{\widehat{u}}+nc^{\widehat{u}}}{|k|^2}\mid
a^{\widehat{u}}+nc^{\widehat{u}})=|a^{\widehat{u}}+nc^{\widehat{u}}|^2.
\end{eqnarray*}
The first two terms on the r.h.s. of \eqref{lem.li.fr.a01} are
bounded by
\begin{eqnarray*}
\de_3|k|^2|a^{\widehat{u}}+nc^{\widehat{u}}|^2+\frac{C}{\de_3}|k|^2|c^{\widehat{u}}|^2
+\frac{C}{\de_3}|k|^2\|\{\FI-\FP\}\widehat{ u}\|_{L^2_\xi}^2,
\end{eqnarray*}
while for the third term, it holds that
\begin{equation*}
\sum_j( b^{\widehat{u}}_j\mid i k_j \pa_t
(a^{\widehat{u}}+nc^{\widehat{u}}) )=\sum_j( b^{\widehat{u}}_j\mid i
k_j (-i k\cdot b^{\widehat{u}}) )=|k\cdot b^{\widehat{u}}|^2\leq
|k|^2|b^{\widehat{u}}|^2,
\end{equation*}
where we used the Fourier transform of \eqref{Mleq.0}:
\begin{equation}\label{lem.li.fr.a02}
\pa_t (a^{\widehat{u}}+nc^{\widehat{u}})+i k\cdot b^{\widehat{u}}=0.
\end{equation}
Then, putting the above estimates into \eqref{lem.li.fr.a01} and
taking the real part yields \eqref{lem.li.fr.a}.

\medskip

 Therefore, \eqref{lem.li.fr.2} follows from the proper
linear combination of \eqref{lem.li.fr.b01}, \eqref{lem.li.fr.c01}
and \eqref{lem.li.fr.a} by taking $0<\de_1,\de_2,\de_3<1$ small
enough and also $\kappa_0>0$ large enough. This completes the proof
of Lemma \ref{lem.li.fr}.\end{proof}

\begin{lemma}\label{lem.li.u}
It holds that
\begin{eqnarray}
\pa_t
\left(\|\widehat{u}\|_{L^2_\xi}+\frac{|a^{\widehat{u}}+nc^{\widehat{u}}|^2}{|k|^2}\right)+
\la \|\nu^{1/2}\{\FI-\FP\}\widehat{u}\|_{L^2_\xi}^2\leq C
\|\nu^{-1/2}\{\FI-\FP\}\widehat{h}\|_{L^2_\xi}^2\label{lem.li.u.1}
\end{eqnarray}
for any $t\geq 0$ and $k\in\R^n$.
\end{lemma}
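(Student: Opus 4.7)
The strategy is a direct $L^2_\xi$--energy estimate at fixed frequency $k$, arranged so that the electric--field term produced by $-\na_x\Phi\cdot\xi\sqrt{\FM}$ is converted into an exact time derivative by means of the mass conservation law and the Poisson equation; this is precisely how the potential energy $|a^{\widehat u}+nc^{\widehat u}|^2/|k|^2\sim |k|^2|\widehat\Phi|^2$ enters as a Lyapunov correction. (I interpret the first summand in the statement as $\|\widehat u\|_{L^2_\xi}^2$.)

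\textbf{Step 1: Basic energy identity.} Fourier transform the linearized equation \eqref{LCP.eq} in $x$ to obtain
$\pa_t\widehat u + i(\xi\cdot k)\widehat u - ik\widehat\Phi\cdot\xi\sqrt{\FM} = \FL\widehat u + \widehat h.$
Take the complex $L^2_\xi$--inner product against $\widehat u$ and keep the real part. The streaming contribution $(i(\xi\cdot k)\widehat u\mid\widehat u)$ is purely imaginary and vanishes. Recognizing $\int\xi\sqrt{\FM}\,\widehat u\,d\xi = b^{\widehat u}$, what remains is
\begin{equation*}
\tfrac{1}{2}\pa_t\|\widehat u\|_{L^2_\xi}^2 \;-\; \rmre\bigl(ik\widehat\Phi\cdot\overline{b^{\widehat u}}\bigr) \;=\; \rmre(\FL\widehat u,\widehat u)_{L^2_\xi} + \rmre(\widehat h,\widehat u)_{L^2_\xi}.
\end{equation*}

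\textbf{Step 2: Eliminating the electric--field term.} Using the Fourier mass conservation law \eqref{lem.li.fr.a02}, $\pa_t(a^{\widehat u}+nc^{\widehat u}) = -ik\cdot b^{\widehat u}$, together with Poisson's relation \eqref{Mleq.p.F}, $a^{\widehat u}+nc^{\widehat u} = -|k|^2\widehat\Phi$, a short computation shows
\begin{equation*}
\tfrac{1}{2}\pa_t\frac{|a^{\widehat u}+nc^{\widehat u}|^2}{|k|^2} \;=\; -\rmre\bigl(ik\widehat\Phi\cdot\overline{b^{\widehat u}}\bigr).
\end{equation*}
Adding this identity to the one from Step 1 cancels the electric field term exactly and yields
\begin{equation*}
\tfrac{1}{2}\pa_t\left(\|\widehat u\|_{L^2_\xi}^2+\frac{|a^{\widehat u}+nc^{\widehat u}|^2}{|k|^2}\right) = \rmre(\FL\widehat u,\widehat u)_{L^2_\xi} + \rmre(\widehat h,\widehat u)_{L^2_\xi}.
\end{equation*}

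\textbf{Step 3: Coercivity and closing the inequality.} Apply the local coercivity of $-\FL$ stated in the introduction, $-\rmre(\FL\widehat u,\widehat u)_{L^2_\xi}\geq \la\|\nu^{1/2}\{\FI-\FP\}\widehat u\|_{L^2_\xi}^2$. For the forcing, exploit $\FP h\equiv 0$, write $\rmre(\widehat h,\widehat u)_{L^2_\xi}=\rmre(\{\FI-\FP\}\widehat h,\{\FI-\FP\}\widehat u)_{L^2_\xi}$, and apply Cauchy--Schwarz with the weight $\nu^{1/2}$, followed by Young's inequality, to bound it by $\tfrac{\la}{2}\|\nu^{1/2}\{\FI-\FP\}\widehat u\|_{L^2_\xi}^2+C\|\nu^{-1/2}\{\FI-\FP\}\widehat h\|_{L^2_\xi}^2$. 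Absorbing the first piece into the coercive term and multiplying through by $2$ produces \eqref{lem.li.u.1}.

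There is no real obstacle: the only subtle point is the sign bookkeeping in Step 2, which must be carried out with the complex conjugation in the inner product to verify that the two contributions really have opposite signs and combine to an exact time derivative. Once that cancellation is secured, Steps 1 and 3 are standard.
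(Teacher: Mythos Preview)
Your proof is correct and follows essentially the same route as the paper: take the $L^2_\xi$ inner product of the Fourier-transformed equation with $\widehat u$, use mass conservation \eqref{lem.li.fr.a02} together with the Poisson relation \eqref{Mleq.p.F} to rewrite the electric-field contribution as the exact time derivative $\tfrac{1}{2|k|^2}\pa_t|a^{\widehat u}+nc^{\widehat u}|^2$, and close with the coercivity of $-\FL$ and a weighted Cauchy--Schwarz on the source term exploiting $\FP h\equiv 0$. The only cosmetic difference is that the paper substitutes the Poisson equation into the PDE before pairing (so the field term appears as $i\frac{k}{|k|^2}(a^{\widehat u}+nc^{\widehat u})\cdot\xi\sqrt{\FM}$), whereas you keep $\widehat\Phi$ and invoke Poisson only in Step~2; the computations are otherwise identical, and your observation that the first summand should read $\|\widehat u\|_{L^2_\xi}^2$ is correct.
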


\begin{proof}
The Fourier transform of \eqref{LCP.eq}$_1$ and \eqref{LCP.eq}$_2$
gives
\begin{equation*}
    \pa_t \widehat{u} + i \xi\cdot k \widehat{u}+ i
    \frac{k}{|k|^2}(a^{\widehat{u}}+n c^{\widehat{u}})\cdot \xi
    \sqrt{\FM}=\FL \widehat{u}+ \widehat{h}.
\end{equation*}
Further taking the complex inner product with $\widehat{u}$ and
taking the real part yield
\begin{eqnarray}
 \frac{1}{2}\pa_t \|\widehat{u}\|_{L^2_\xi}^2&+&\rmre \int_{\R^n}
    \left(i  \frac{k}{|k|^2}(a^{\widehat{u}}+n c^{\widehat{u}})\cdot \xi
    \sqrt{\FM}\mid\widehat{u}\right)d\xi  \nonumber\\
    &&\hspace{2cm}=\rmre \int_{\R^n}(\FL  \widehat{u}\mid
    \widehat{u})d\xi+ \rmre \int_{\R^n}(\widehat{h}\mid
    \widehat{u})d\xi.\label{lem.li.u.p1}
\end{eqnarray}
For the second term on the l.h.s. of \eqref{lem.li.u.p1},  from
\eqref{lem.li.fr.a02}, one has
\begin{eqnarray*}
 \rmre \int_{\R^n}
    \left(i  \frac{k}{|k|^2}(a^{\widehat{u}}+n c^{\widehat{u}})\cdot \xi
    \sqrt{\FM}\mid\widehat{u}\right)d\xi  &=& \rmre \int_{\R^n}
    \left( \frac{1}{|k|^2}(a^{\widehat{u}}+n c^{\widehat{u}})\mid-i k\cdot
    b^{\widehat{u}}\right)d\xi\\
    &=& \rmre \int_{\R^n}
    \left( \frac{1}{|k|^2}(a^{\widehat{u}}+n c^{\widehat{u}})\mid\pa_t (a^{\widehat{u}}+n
    c^{\widehat{u}})\right)d\xi\\
    &=&\frac{1}{2|k|^2}\pa_t |a^{\widehat{u}}+n c^{\widehat{u}}|^2.
\end{eqnarray*}
For two terms on the r.h.s. of \eqref{lem.li.u.p1}, one has
\begin{eqnarray*}
   \rmre \int_{\R^n}(\FL  \widehat{u}\mid
    \widehat{u})d\xi &\leq & -\la  \|\nu^{1/2}\{\FI-\FP\}\widehat{u}\|_{L^2_\xi}^2
\end{eqnarray*}
and
\begin{eqnarray*}
   \rmre \int_{\R^n}( \widehat{h}\mid
    \widehat{u})d\xi &=& \rmre \int_{\R^n}( \{\FI-\FP\}\widehat{h}\mid
    \{\FI-\FP\}\widehat{u})d\xi\\
    &\leq & \frac{\la}{2}
    \|\nu^{1/2}\{\FI-\FP\}\widehat{u}\|_{L^2_\xi}^2+C\|\nu^{-1/2}\{\FI-\FP\}\widehat{h}\|_{L^2_\xi}^2,
\end{eqnarray*}
where $\FP h \equiv 0$ was used. Plugging the above estimates into
\eqref{lem.li.u.p1} gives  \eqref{lem.li.u.1}. This completes the
proof of Lemma \ref{lem.li.u}.
\end{proof}

\medskip

\noindent{\bf {Proof of Theorem \ref{thm.liDe}}:} Let $\kappa_2>0$
be a small constant to be determined later. Define
\begin{equation*}
    E(\widehat{u}(t,k))=\|\widehat{u}(t,k)\|_{L^2_\xi}^2+
    \frac{1}{|k|^2}|a^{\widehat{u}}+nc^{\widehat{u}}|^2+\kappa_2\rmre\,
    E_{free} (\widehat{u}(t,k))
\end{equation*}
for $t\geq 0$ and $k\in\R^n$, where $E_{free} (\widehat{u}(t,k))$ is
given by \eqref{lem.li.fr.1}. Notice from \eqref{lem.li.fr.1} that
\begin{eqnarray*}
\left|E_{free} (\widehat{u}(t,k)) \right| &\leq &
C(|b^{\widehat{u}}|^2+|c^{\widehat{u}}|^2+
|a^{\widehat{u}}+nc^{\widehat{u}}|^2)\\
&&+\sum_{ij}(|A_{ij}(\{\FI-\FP\}\widehat{u})|^2
+|B_i(\{\FI-\FP\}\widehat{u})|^2)\\
&\leq & C(\|\FP
\widehat{u}\|_{L^2_\xi}^2+\|\{\FI-\FP\}\widehat{u}\|_{L^2_\xi}^2)\\
&\leq& C \|\widehat{u}(t,k)\|_{L^2_\xi}^2.
\end{eqnarray*}
Therefore, one can choose $\kappa_2>0$ small enough such that
\begin{equation}\label{thm.liDe.p0}
 E(\widehat{u}(t,k))\sim\|\widehat{u}(t,k)\|_{L^2_\xi}^2+
    \frac{1}{|k|^2}|a^{\widehat{u}}+nc^{\widehat{u}}|^2
\end{equation}
holds. By further letting $\kappa_2>0$ be small enough, the linear
combination of  \eqref{lem.li.u.1} and  \eqref{lem.li.fr.2} implies
\begin{eqnarray*}
  \pa_t E(\widehat{u}(t,k)) &+& \la \|\nu^{1/2}\{\FI-\FP\}\widehat{u}\|_{L^2_\xi}^2+ \la
\frac{|k|^2}{1+|k|^2}\left(|b^{\widehat{u}}|^2+|c^{\widehat{u}}|^2
\right)\\
&&\hspace{2cm}+\la |a^{\widehat{u}}+nc^{\widehat{u}}|^2\leq C
\|\nu^{-1/2}\{\FI-\FP\}\widehat{h}\|_{L^2_\xi}^2.
\end{eqnarray*}
Notice that
\begin{eqnarray*}
&&\|\nu^{1/2}\{\FI-\FP\}\widehat{u}\|_{L^2_\xi}^2
+\frac{|k|^2}{1+|k|^2}\left(|b^{\widehat{u}}|^2+|c^{\widehat{u}}|^2
\right)+|a^{\widehat{u}}+nc^{\widehat{u}}|^2\\
&&\geq \la
\frac{|k|^2}{1+|k|^2}\left(\|\{\FI-\FP\}\widehat{u}\|_{L^2_\xi}^2+|a^{\widehat{u}}+nc^{\widehat{u}}|^2
+|b^{\widehat{u}}|^2+|c^{\widehat{u}}|^2+\frac{1}{|k|^2}|a^{\widehat{u}}+nc^{\widehat{u}}|^2\right)\\
&&\geq\la
\frac{|k|^2}{1+|k|^2}\left(\|\{\FI-\FP\}\widehat{u}\|_{L^2_\xi}^2+\|\FP\widehat{u}\|_{L^2_\xi}^2
+\frac{1}{|k|^2}|a^{\widehat{u}}+nc^{\widehat{u}}|^2\right)\\
&&\geq\la \frac{|k|^2}{1+|k|^2}\left(\|\widehat{u}\|_{L^2_\xi}^2
+\frac{1}{|k|^2}|a^{\widehat{u}}+nc^{\widehat{u}}|^2\right)
\geq
\la
\frac{|k|^2}{1+|k|^2}
E (\widehat{u}(t,k)).
\end{eqnarray*}
Then, it follows that
\begin{equation}\label{thm.liDe.p00}
 \pa_t E (\widehat{u}(t,k))+  \frac{\la |k|^2}{1+|k|^2}E
 (\widehat{u}(t,k))\leq C
\|\nu^{-1/2}\{\FI-\FP\}\widehat{h}\|_{L^2_\xi}^2,
\end{equation}
which by the Gronwall inequality, implies
\begin{eqnarray}
E(\widehat{u}(t,k))&\leq& E(\widehat{u}(0,k))e^{-
\frac{\la |k|^2}{1+|k|^2}t}  \nonumber\\
&&+C\int_0^t e^{- \frac{\la
|k|^2}{1+|k|^2}(t-s)}\|\nu^{-1/2}\{\FI-\FP\}\widehat{h}(s,k)\|_{L^2_\xi}^2ds\label{thm.liDe.p1}
\end{eqnarray}
for any $t\geq 0$ and $k\in\R^n$.

Now, to prove \eqref{thm.liDe.1} and \eqref{thm.liDe.2}, let $h=0$
so that $u(t)=e^{ t\FB }u_0$  is the solution to the Cauchy problem
\eqref{LCP.eq} and hence satisfies the estimate \eqref{thm.liDe.p1}
with $h=0$. Write $k^{\al}=k_1^{\al_1}k_2^{\al_2}\cdots
k_n^{\al_n}$. By noticing
\begin{eqnarray}
&&\|\pa_x^\al e^{ t\FB }u_0\|^2+\|\pa_x^\al \na_x\De_x^{-1}\FP_0
e^{ t\FB }u_0\|^2  \nonumber\\
&&=\int_{\R^n_k}|k^{2\al}|\cdot \|\widehat{u}(t,k)\|_{L^2_\xi}^2dk +
\int_{\R^n_k}|k^{2\al}|\cdot
 \frac{1}{|k|^2}|a^{\widehat{u}}+nc^{\widehat{u}}|^2dk  \nonumber\\
 &&\leq C \int_{\R^n_k}|k^{2\al}| \left| E(\widehat{u}(t,k))\right| dk,\label{thm.liDe.p1.1}
\end{eqnarray}
then, from \eqref{thm.liDe.p1} with $h=0$  and \eqref{thm.liDe.p0},
one has
\begin{eqnarray}
&&\|\pa_x^\al e^{ t\FB }u_0\|^2+\|\pa_x^\al \na_x\De_x^{-1}\FP_0
e^{ t\FB }u_0\|^2  \nonumber\\
&&\leq C\int_{\R^n_k}|k^{2\al}|e^{-\frac{\la|k|^2}{1+|k|^2}
t}\|\widehat{u_0}(k)\|_{L^2_\xi}^2dk+
C\int_{\R^n_k}\frac{|k^{2\al}|}{|k|^2}e^{-\frac{\la|k|^2}{1+|k|^2}
t}\|\widehat{\FP_0u_0}(k)\|_{L^2_\xi}^2dk.\label{thm.liDe.p2}
\end{eqnarray}
As in \cite{Ka}, one can further estimate the first term on the
r.h.s. of  \eqref{thm.liDe.p2} by
\begin{eqnarray}
&&\int_{\R^n_k}|k^{2\al}|e^{-\frac{\la|k|^2}{1+|k|^2}
t}\|\widehat{u_0}(k)\|_{L^2_\xi}^2dk\nonumber\\
&&\leq  \int_{|k|\leq 1}
|k^{2(\al-\al')}|e^{-\frac{\la|k|^2}{1+|k|^2}
t}|k^{2\al'}|\cdot\|\widehat{u_0}(k)\|_{L^2_\xi}^2dk+ \int_{|k|\geq
1}
e^{-\frac{\la}{2}t}|k^{2\al}|\cdot\|\widehat{u_0}(k)\|_{L^2_\xi}^2dk\nonumber\\
&&\leq C
 (1+t)^{-\frac{n}{q}+\frac{n-2|\al-\al'|}{2}}\|\pa_x^{\al'}u_0\|_{Z_q}^2+ C
e^{-\frac{\la}{2}t}\|\pa_x^\al u_0\|^2,\label{thm.liDe.p3}
\end{eqnarray}
where the H\"{o}lder and Hausdorff-Young inequalities were used, and
similarly for the second term on the r.h.s. of  \eqref{thm.liDe.p2},
it holds that
\begin{eqnarray}
&&\int_{\R^n_k}\frac{|k^{2\al}|}{|k|^2}e^{-\frac{\la|k|^2}{1+|k|^2}
t}\|\widehat{\FP_0u_0}(k)\|_{L^2_\xi}^2dk \leq
\int_{\R^n_k}\frac{|k^{2\al}|}{|k|^2}e^{-\frac{\la|k|^2}{1+|k|^2}
t}\|\widehat{u_0}(k)\|_{L^2_\xi}^2dk \nonumber\\
&&\leq C
 (1+t)^{-\frac{n}{q}+\frac{n-2(|\al-\al'|-1)}{2}}\|\pa_x^{\al'}u_0\|_{Z_q}^2+ C
e^{-\frac{\la}{2}t}\|\pa_x^\al u_0\|^2.\label{thm.liDe.p4}
\end{eqnarray}
Thus, \eqref{thm.liDe.1} follows from \eqref{thm.liDe.p2} together
with \eqref{thm.liDe.p3} and \eqref{thm.liDe.p4}. Moreover, to prove
\eqref{thm.liDe.2}, notice $\FP_0\{\FI-\FP_0\}u_0=0$ and hence
\eqref{thm.liDe.2} similarly follows only from \eqref{thm.liDe.p2}
and \eqref{thm.liDe.p3} since the second term on the r.h.s. of
\eqref{thm.liDe.p2} vanishes.

Finally, to prove \eqref{thm.liDe.3}, let $u_0=0$ so that
\begin{equation*}
    u(t)=\int_0^t e^{ (t-s)\FB  } \{\FI-\FP\}h(s)ds
\end{equation*}
 is the solution to the Cauchy problem
\eqref{LCP.eq} and hence satisfies the estimate \eqref{thm.liDe.p1}
with $u_0=0$. Then, similar to \eqref{thm.liDe.p1.1}, one has
\begin{eqnarray*}
&& \left\|\pa_x^\al\int_0^t e^{ (t-s)\FB  }
\{\FI-\FP\}h(s)ds\right\|^2+\left\|\pa_x^\al\na_x\De_x^{-1}\FP_0
\int_0^t e^{ (t-s)\FB  } \{\FI-\FP\}h(s)ds\right\|^2\\
&&\leq C\int_0^t\int_{\R^n_k}|k^{2\al}|e^{-\frac{\la|k|^2}{1+|k|^2}
(t-s)}\|\nu^{-1/2}\{\FI-\FP\}\widehat{h}(s,k)\|_{L^2_\xi}^2dkds.
\end{eqnarray*}
Therefore, \eqref{thm.liDe.3} follows in the same way as in
\eqref{thm.liDe.p3}. This completes the proof of Theorem
\ref{thm.liDe}.

\medskip

We conclude this section by extending Theorem \ref{thm.liDe} to the
exponential time-decay rate in the case of $\T^n$ as mentioned in
Remark \ref{rem.extension}.

\begin{theorem}\label{thm.torus}
Suppose
\begin{equation}\label{thm.torus.ass}
    \int_{\T^n}\FP u_0 dx=0
\end{equation}
for any $\xi\in\R^n$. Let $e^{t\FB}u_0$ be the solution to the
Cauchy problem
\begin{equation}\label{LCP.eq.T}
  \left\{\begin{array}{l}
\dis \pa_t u +\xi\cdot \na_x u-\na_x\Phi\cdot \xi\sqrt{\FM}= \FL u,\\[3mm]
\dis \De_x \Phi=\int_{\T^n}\sqrt{\FM} u d\xi,\ \int_{\T^n} \Phi dx=0,\ \ t>0,x\in\T^n,  \xi\in\R^n,\\[3mm]
\dis u|_{t=0}=u_0,\ \ x\in\T^n.
  \end{array}\right.
\end{equation}
Then, there are constants $C>0$, $\la>0$ such that
\begin{equation}\label{thm.torus.1}
    \|e^{t\FB} u_0\|_{L^2(\T^n\times\R^n)}\leq C e^{-\la
    t}\|u_0\|_{L^2(\T^n\times\R^n)},
\end{equation}
for any $t\geq 0$.
\end{theorem}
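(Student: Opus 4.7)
The plan is to adapt the Fourier energy argument used to prove Theorem \ref{thm.liDe} to the torus setting, exploiting the fact that the discrete frequencies $k\in\Z^n$ are bounded away from the origin. The key observation is that for nonzero $k\in\Z^n$ one has $|k|\geq 1$, so the decay multiplier $|k|^2/(1+|k|^2)\geq 1/2$ is uniformly bounded below, which upgrades the algebraic decay on $\R^n$ to exponential decay on $\T^n$. The hypothesis \eqref{thm.torus.ass} is exactly what is needed to eliminate the single obstacle at $k=0$, where the potential term $|k|^{-2}$ in the functional $E$ would otherwise be singular and where the Poisson equation needs compatibility.

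First, I would expand $u$ in Fourier series over $\Z^n$ and inspect the zero mode. The macroscopic balance laws \eqref{Mleq.0}--\eqref{Mleq.2} at $k=0$ all lose their spatial-derivative terms, giving $\pa_t \widehat{\FP u}(t,0)=0$. Combined with $\int_{\T^n}\FP u_0\,dx=0$, this propagates $\FP \widehat{u}(t,0)\equiv 0$ for all $t\geq 0$. In particular $(a^{\widehat u}+nc^{\widehat u})(t,0)=0$, so the Poisson equation is consistent at $k=0$ with $\na_x\Phi|_{k=0}=0$ under the normalization $\int_{\T^n}\Phi\,dx=0$, and \eqref{LCP.eq.T} at $k=0$ reduces to $\pa_t\widehat u(t,0)=\FL\widehat u(t,0)$. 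Because $\widehat u(t,0)\in\CN^\perp$, the full spectral gap of $-\FL$ on $\CN^\perp$ yields $\|\widehat u(t,0)\|_{L^2_\xi}\leq e^{-\la_0 t}\|\widehat{u_0}(0)\|_{L^2_\xi}$.

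For nonzero modes $k\in\Z^n\setminus\{0\}$, I would reuse the same energy functional $E(\widehat u(t,k))$ constructed from Lemmas \ref{lem.li.fr} and \ref{lem.li.u}, since those lemmas are pointwise in $k$ and indifferent to whether $k$ is continuous or discrete. Setting $h=0$ in \eqref{thm.liDe.p00} gives $\pa_t E(\widehat u(t,k))+\la|k|^2(1+|k|^2)^{-1}E(\widehat u(t,k))\leq 0$, and because $|k|\geq 1$ forces $|k|^2/(1+|k|^2)\geq 1/2$, Gronwall delivers $E(\widehat u(t,k))\leq e^{-\la t/2}E(\widehat{u_0}(k))$. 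Moreover, from \eqref{thm.liDe.p0} and the bound $|k|^{-2}\leq 1$, one has $E(\widehat u(t,k))\sim \|\widehat u(t,k)\|_{L^2_\xi}^2$ with constants uniform in $\{k:|k|\geq 1\}$, since $|a^{\widehat u}+nc^{\widehat u}|^2\leq C\|\widehat u\|_{L^2_\xi}^2$. Summing the squared $L^2_\xi$ bounds over $k\in\Z^n$ via Parseval and combining with the $k=0$ estimate from the previous step yields \eqref{thm.torus.1} with $\la=\min\{\la_0,\la/4\}$.

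The main technical point, rather than any hard estimate, is to verify that \eqref{thm.torus.ass} is precisely the right hypothesis: it simultaneously removes the potentially singular $|k|^{-2}|a^{\widehat u}+nc^{\widehat u}|^2$ contribution to $E$ at $k=0$, guarantees solvability of the Poisson equation on $\T^n$, and enables the coercivity of $\FL$ on $\CN^\perp$ to drive the zero mode down exponentially. Once this is recognized, every other step transfers mechanically from the whole-space proof, with the decisive simplification that no separate low-frequency regime needs to be analyzed.
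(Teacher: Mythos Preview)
Your proposal is correct and follows essentially the same approach as the paper: both arguments transfer the pointwise-in-$k$ functional $E(\widehat u(t,k))$ and the inequality \eqref{thm.liDe.p00} to $k\in\Z^n$, use the conservation of $\int_{\T^n}(a^u,b^u,c^u)\,dx$ together with \eqref{thm.torus.ass} to kill the macroscopic part at $k=0$, and exploit $|k|^2/(1+|k|^2)\geq 1/2$ for $|k|\geq 1$ to obtain a uniform exponential rate. The only cosmetic difference is that the paper folds the $k=0$ mode into the same differential inequality (the inequality becomes $0\geq 0$ on the macroscopic side, and the microscopic dissipation from Lemma~\ref{lem.li.u} handles the rest), whereas you treat $k=0$ by a separate spectral-gap argument on $\CN^\perp$; these are equivalent.
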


\begin{proof}
In fact, on the one hand, \eqref{lem.li.fr.2} and \eqref{lem.li.u.1}
with $h=0$ in Lemma \ref{lem.li.fr} and Lemma \ref{lem.li.u} still
hold for any $t\geq 0$ and $k\in \Z^n$ for the solution
$u(t)=e^{t\FB} u_0$ to the Cauchy problem \eqref{LCP.eq.T} in the
torus case. On the other hand, from \eqref{Mleq.0}-\eqref{Mleq.2},
one has the total conservation laws
\begin{equation*}
    \left.\int_{\T^n} (a^u,b^u,c^u) dx\right|_{t>0}= \int_{\T^n} (a^{u_0},b^{u_0},c^{u_0})
    dx,
\end{equation*}
which together with the assumption \eqref{thm.torus.ass} imply
\begin{equation*}
    \left.\int_{\T^n} (a^u,b^u,c^u) dx\right|_{t\geq 0}\equiv 0.
\end{equation*}
Thus, it follows that
\begin{equation*}
    \left.(a^{\widehat{u}},b^{\widehat{u}},c^{\widehat{u}}) \right|_{t\geq 0,k=0}\equiv
    0,
\end{equation*}
which yields
\begin{equation*}
    \frac{2|k|^2}{1+|k|^2}(|b^{\widehat{u}}|^2+|c^{\widehat{u}}|^2+
|a^{\widehat{u}}+nc^{\widehat{u}}|^2)\geq
|b^{\widehat{u}}|^2+|c^{\widehat{u}}|^2+
|a^{\widehat{u}}+nc^{\widehat{u}}|^2,
\end{equation*}
for any $t\geq 0$ and $k\in \Z^n$. Therefore, similar to get
\eqref{thm.liDe.p00}, it holds that
\begin{equation*}
 \pa_t E (\widehat{u}(t,k))+ \la E
 (\widehat{u}(t,k))\leq 0,
\end{equation*}
where $E (\widehat{u}(t,k))$ is still given by \eqref{thm.liDe.p0}.
Then,
\begin{equation}\label{thm.torus.p1}
    E(\widehat{u}(t,k))\leq e^{-\la t}  E(\widehat{u}(0,k)).
\end{equation}
holds for any $t\geq 0$ and $k\in \Z^n$. Notice that in the torus
case, one has
\begin{eqnarray*}
\int_{\Z^n}E(\widehat{u}(t,k))dk &\sim&
\int_{\Z^n}\|\widehat{u}(t,k)\|_{L^2_\xi}^2dk+
\int_{\Z^n}\frac{1}{|k|^2}|a^{\widehat{u}}+nc^{\widehat{u}}|^2dk\\
&\sim& \int_{\Z^n}\|\widehat{u}(t,k)\|_{L^2_\xi}^2dk +
\int_{\Z^n}|a^{\widehat{u}}+nc^{\widehat{u}}|^2dk\\
&\sim& \int_{\Z^n}\|\widehat{u}(t,k)\|_{L^2_\xi}^2dk
=\|u(t)\|_{L^2(\T^n\times\R^n)}^2.
\end{eqnarray*}
Therefore, the integration of \eqref{thm.torus.p1} over $k\in\Z^n$
gives \eqref{thm.torus.1}. This completes the proof of Theorem
\ref{thm.torus}.
\end{proof}

\section{The nonlinear system}\label{sec.ES}

\subsection{Energy estimates}\label{sec.ES.sub1}

From now on, we devote ourselves to the proof of Proposition
\ref{prop.main}. For that, assume that all conditions of Proposition
\ref{prop.main} hold, particularly
$$
\|u_0\|_{H^N\cap L^2_w\cap Z_1}
$$
is supposed  to be sufficiently small throughout this section. Let
$u$ be the solution to the Cauchy problem on the nonlinear VPB
system \eqref{VPB.p1}, \eqref{VPB.p2} and \eqref{VPB.pID} obtained
by Proposition \ref{a.prop.exi}. Here, notice from Remark
\ref{rem.AssTwo} that the solution $u$ indeed exists under the
assumptions of Proposition \ref{prop.main}. Therefore, from
\eqref{a.prop.exi.3},
\begin{equation}\label{smallness}
    \CE(u(t))+\la \int_0^t \CD(u(s))ds\leq \CE(u_0)
\end{equation}
holds for any $t\geq 0$, where $\CE(u(t))$ and $\CD(u(t))$ are
defined by \eqref{a.prop.exi.1} and \eqref{a.prop.exi.2},
respectively. Thus, one can suppose that the energy functional
$\CE(u(t))$
 is small enough uniformly in time. We also
remark that those uniform {\it a priori} estimates given in Lemma
\ref{a.pro.pri} will be used  later on in the proof.

In this subsection, for some preparations,  we are concerned with
energy estimates on the microscopic part $\{\FI-\FP\}u$ to obtain
some Lyapunov-type inequalities. Recall from \cite{DY-09VPB} that
$\{\FI-\FP\}u$ satisfies
\begin{eqnarray}
&& \pa_t \{\FI-\FP\}u+\xi \cdot\na_x \{\FI-\FP\}u+\na_x
\Phi\cdot\na_\xi
\{\FI-\FP\}u\nonumber\\
&=& \FL \{\FI-\FP\}u+\Ga(u,u)+\frac{1}{2}\xi\cdot\na_x\Phi \{\FI-\FP\}u\nonumber\\
&& -\{\FI-\FP\}(\xi\cdot\na_x\FP u +\na_x\Phi\cdot\na_\xi
\FP u-\frac{1}{2}\xi\cdot\na_x\Phi \FP u)\nonumber\\
&&-\FP(\xi\cdot\na_x \{\FI-\FP\} u +\na_x\Phi\cdot\na_\xi
\{\FI-\FP\} u-\frac{1}{2}\xi\cdot\na_x\Phi  \{\FI-\FP\}
u),\label{lem-micro-SVMD-p02}
\end{eqnarray}
and also the following lemma was proved in \cite{UY-AA,DUYZ}.
\begin{lemma}\label{ineq.ga.K}
It holds that
\begin{eqnarray*}
\|\nu^{-\ga} \Ga(u,v)\|_{L^2_\xi}\leq C (\|\nu^{1-\ga} u\|_{L^2_\xi}
\|v\|_{L^2_\xi}+\|u\|_{L^2_\xi}\|\nu^{1-\ga} v\|_{L^2_\xi}),
\end{eqnarray*}
for $0\leq \ga\leq 1$, and
\begin{eqnarray*}
&\dis \|K u\|_{H^N}\leq C\|u\|_{H^N},\\
&\dis \|\Ga(u,v)\|_{H^N}\leq
C(\|u\|_{H^N_\nu}\|v\|_{H^N}+\|u\|_{H^N}\|v\|_{H^N_\nu}).
\end{eqnarray*}
\end{lemma}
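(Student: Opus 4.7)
The plan is to prove the three estimates in order, using classical Grad-type bounds for the Boltzmann collision operator as the main tools.

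For the pointwise bound on $\nu^{-\ga}\Ga(u,v)$, I would split $\Ga=\Ga_+-\Ga_-$ into its gain and loss parts. The loss part reads $\Ga_-(u,v)(\xi)=u(\xi)\int\sqrt{\FM_\ast}\,v(\xi_\ast)|(\xi-\xi_\ast)\cdot\om|\,d\om\,d\xi_\ast$; Cauchy--Schwarz in $(\xi_\ast,\om)$ together with the elementary moment bound $\int\FM_\ast|\xi-\xi_\ast|^2\,d\xi_\ast\leq C\nu(\xi)^2$ yields $|\Ga_-(u,v)(\xi)|\leq C\nu(\xi)|u(\xi)|\,\|v\|_{L^2_\xi}$. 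Multiplying by $\nu^{-\ga}$ and taking the $L^2_\xi$ norm in $\xi$ gives the first term of the claimed bound. For the gain $\Ga_+$, a pre/post-collisional change of variables (equivalently the Carleman representation) transfers the analysis to the post-collisional velocities; the resulting symmetric expression is controlled by both terms on the right-hand side, producing the full bound.

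For $\|Ku\|_{H^N}\leq C\|u\|_{H^N}$, since $K$ is a pure $\xi$-integral operator with symmetric kernel, spatial derivatives commute, $\pa_x^\al Ku=K\pa_x^\al u$, so it suffices to handle $\xi$-derivatives and pure $L^2$. The $L^2_\xi$ bound follows from Schur's test using Grad's classical estimate $|K(\xi,\xi_\ast)|\leq C(1+|\xi-\xi_\ast|)^{-1}e^{-c|\xi-\xi_\ast|^2}$, which ensures $\sup_\xi\int|K(\xi,\xi_\ast)|d\xi_\ast<\infty$ and the analogous bound in $\xi_\ast$. For velocity derivatives $\pa_\xi^\be$, one integrates by parts to shift derivatives onto $u$ or onto the kernel, the latter still satisfying Grad-type decay.

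For $\|\Ga(u,v)\|_{H^N}$, Leibniz applied to $\pa^\al$ gives $\pa^\al\Ga(u,v)=\sum_{\al_1+\al_2=\al}C_{\al_1,\al_2}\,\Ga(\pa^{\al_1}u,\pa^{\al_2}v)$ (the velocity derivatives produce bilinear collision terms of the same type, after integration by parts on the $\xi_\ast,\om$ integration). Applying the first estimate with $\ga=0$ pointwise in $x$ yields
\begin{equation*}
\|\Ga(\pa^{\al_1}u,\pa^{\al_2}v)\|_{L^2_\xi}\leq C\bigl(\|\nu\pa^{\al_1}u\|_{L^2_\xi}\|\pa^{\al_2}v\|_{L^2_\xi}+\|\pa^{\al_1}u\|_{L^2_\xi}\|\nu\pa^{\al_2}v\|_{L^2_\xi}\bigr).
\end{equation*}
Since $N\geq 4$, at least one of $\al_1,\al_2$ has length $\leq N-2$, so Sobolev's embedding $H^2_x\hookrightarrow L^\infty_x$ allows one to place the lower-order factor in $L^\infty_x(L^2_\xi)$ and the higher-order factor in $L^2_x(L^2_\xi)$; summing over $\al_1,\al_2$ produces the stated inequality. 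The main obstacle I expect is the careful handling of velocity derivatives for both $K$ and $\Ga$, where one must exploit the Grad kernel's exponential decay in $|\xi-\xi_\ast|$ to absorb derivatives (rather than appeal to generic bounds), and carefully allocate $L^\infty_x$ vs.\ $L^2_x$ in the Leibniz sum; these are standard manipulations essentially carried out in \cite{UY-AA, DUYZ}, which I would follow closely.
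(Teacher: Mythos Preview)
The paper does not actually give its own proof of this lemma: it simply states that ``the following lemma was proved in \cite{UY-AA,DUYZ}'' and moves on. Your sketch follows precisely the classical Grad-type arguments from those references (gain/loss splitting with Cauchy--Schwarz and Carleman change of variables for the first estimate, Grad's kernel bound plus Schur's test for $K$, and Leibniz plus Sobolev embedding for the $H^N$ bilinear estimate), so your approach is exactly aligned with what the paper defers to and is correct in outline. One minor imprecision: the Grad kernel bound is $|K(\xi,\xi_\ast)|\leq C|\xi-\xi_\ast|^{-1}e^{-c|\xi-\xi_\ast|^2}e^{-c(|\xi|^2-|\xi_\ast|^2)^2/|\xi-\xi_\ast|^2}$ rather than the non-singular form you wrote, but this does not affect the Schur test since the singularity is integrable.
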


Firstly, corresponding to \eqref{a.pro.pri.1}, it is straightforward
to prove

\begin{lemma}\label{lem.ES}
It holds that
\begin{eqnarray}
  \frac{d}{dt}\|\{\FI-\FP\} u\|^2+\la \|w^{1/2} \{\FI-\FP\} u\|^2 &\leq
  & C \|\na_x\FP u\|^2+
  C\CE(u(t))\CD(u(t)),\label{lem.ES.0}
\end{eqnarray}
for any $t\geq 0$.
\end{lemma}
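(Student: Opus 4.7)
The plan is to multiply equation \eqref{lem-micro-SVMD-p02} by $\{\FI-\FP\}u$ and integrate over $\R^3_x\times\R^3_\xi$, then extract the dissipation from $\FL$ and treat every remaining term either as a macroscopic gradient or as something absorbed into $\CE(u)\CD(u)$.

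First I would dispose of the transport-type contributions. The term $\langle\xi\cdot\na_x\{\FI-\FP\}u,\{\FI-\FP\}u\rangle$ vanishes after integration by parts in $x$, and $\langle\na_x\Phi\cdot\na_\xi\{\FI-\FP\}u,\{\FI-\FP\}u\rangle$ vanishes after integration by parts in $\xi$ (since $\na_x\Phi$ is $\xi$-independent). The coercivity of $-\FL$ recalled just before \eqref{form.P}, together with $\nu(\xi)\sim w(\xi)$, yields the dissipation $\la\|w^{1/2}\{\FI-\FP\}u\|^2$ on the left.

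Next I would handle the macroscopic source terms. The pieces $\{\FI-\FP\}(\xi\cdot\na_x\FP u)$ and $\FP(\xi\cdot\na_x\{\FI-\FP\}u)$ are both controlled, after Cauchy--Schwarz with a small parameter, by $\eps\|w^{1/2}\{\FI-\FP\}u\|^2 + C_\eps\|\na_x\FP u\|^2$; the first uses that multiplication by $\xi$ against the Gaussian factors in $\FP u$ produces something comparable to $\na_x\FP u$, while the second is immediate since $\FP$ is a bounded projection on $L^2_\xi$ and $\xi\sqrt{\FM}\in L^2_\xi$. Choosing $\eps$ small lets the $\eps$-term be absorbed into the dissipation, and the $\|\na_x\FP u\|^2$ contribution is precisely what appears on the right of \eqref{lem.ES.0}.

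For the genuinely nonlinear contributions I would use Lemma \ref{ineq.ga.K} on $\langle\Ga(u,u),\{\FI-\FP\}u\rangle$ with $\ga=1/2$, estimating
\[
|\langle\Ga(u,u),\{\FI-\FP\}u\rangle|\leq \|\nu^{-1/2}\Ga(u,u)\|\,\|\nu^{1/2}\{\FI-\FP\}u\|\leq C\|u\|_{H^N}\|w^{1/2}u\|\,\|w^{1/2}\{\FI-\FP\}u\|,
\]
which after Cauchy--Schwarz (and using that $\|w^{1/2}\FP u\|\lesssim \|\FP u\|$) fits into $\CE(u)\CD(u)$. The electric-field nonlinearities $\frac{1}{2}\langle\xi\cdot\na_x\Phi\,\{\FI-\FP\}u,\{\FI-\FP\}u\rangle$ and the analogous terms acting on $\FP u$ are bounded by $\|\na_x\Phi\|_{L^\infty_x}$ (controlled via Sobolev and Poisson \eqref{cl.Poisson} by $\CE(u)^{1/2}$) times weighted norms of $u$, which again sits in $\CE(u)\CD(u)$.

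The main obstacle is the term $\{\FI-\FP\}(\na_x\Phi\cdot\na_\xi\FP u)$: the $\na_\xi$ falls on $\FP u$, producing both a linear-looking Gaussian weight times $\na_x\Phi$ (which is pure macroscopic content, part of $\CE(u)\CD(u)$ after using $\|\na_x\Phi\|\lesssim \|\FP_0 u\|$) and an extra $\xi$-factor that must be paired against the weight $w^{1/2}$ already present on $\{\FI-\FP\}u$ in the dissipation. I would exploit the projection $\{\FI-\FP\}$ to commute this derivative cleanly, bound $\|\na_x\Phi\|_{L^\infty_x}$ by $\CE(u)^{1/2}$ through Sobolev embedding, and close the estimate by Cauchy--Schwarz with a small parameter. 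Combining all of the above, summing and choosing the small parameters appropriately yields \eqref{lem.ES.0}.
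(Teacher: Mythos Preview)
Your approach is the same as the paper's---multiply \eqref{lem-micro-SVMD-p02} by $\{\FI-\FP\}u$, integrate, and sort the terms---and most of it is fine. Two points deserve correction.

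First, the term $\langle\FP(\xi\cdot\na_x\{\FI-\FP\}u),\{\FI-\FP\}u\rangle$ (and likewise the other $\FP(\cdots)$ pieces in the last line of \eqref{lem-micro-SVMD-p02}) is identically zero by the orthogonality $\FP\perp\{\FI-\FP\}$ in $L^2_\xi$. You treat it as something to be bounded by $\|\na_x\FP u\|^2$; not only is this unnecessary, but your stated bound would be wrong if the term were nonzero, since it contains $\na_x\{\FI-\FP\}u$, not $\na_x\FP u$. Harmless here, but worth noting.

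Second, and more substantively, your estimate for the collision term does not quite land in $\CE(u)\CD(u)$. From your bound
\[
|\langle\Ga(u,u),\{\FI-\FP\}u\rangle|\leq C\|u\|_{H^N}\,\|w^{1/2}u\|\,\|w^{1/2}\{\FI-\FP\}u\|,
\]
Cauchy--Schwarz against the last factor leaves $C\|u\|_{H^N}^2\|w^{1/2}u\|^2\leq C\CE(u)\bigl(\|\FP_1 u\|^2+\CD(u)\bigr)$. The piece $\CE(u)\|\FP_1 u\|^2$ is \emph{not} controlled by $\CE(u)\CD(u)$, because the undifferentiated $\|\FP_1 u\|^2=\|(b^u,c^u)\|^2$ is absent from $\CD(u)$ (see \eqref{a.prop.exi.2}). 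The paper's sharper claim $\|\nu^{-1/2}\Ga(u,u)\|\leq C\sqrt{\CE(u)\CD(u)}$ avoids this by placing the $L^\infty_x$ norm on a factor that lies entirely in $\CD(u)$; for the $\FP_1$ component this requires an interpolation of Gagliardo--Nirenberg type, e.g.\ $\|(b^u,c^u)\|_{L^\infty_x}\leq C\|\na_x(b^u,c^u)\|^{1/2}\|\na_x^2(b^u,c^u)\|^{1/2}\leq C\sqrt{\CD(u)}$, rather than the cruder $H^2\hookrightarrow L^\infty$ embedding. With that adjustment your argument closes exactly as the paper's does.
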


\begin{proof}
The direct zero-order energy estimate on \eqref{lem-micro-SVMD-p02}
gives
\begin{eqnarray*}
&&\frac{1}{2}\frac{d}{dt}\|\{\FI-\FP\} u\|^2+\la \|\nu^{1/2}
\{\FI-\FP\} u\|^2\\
&&\leq \int_{\R^3} \langle \Ga(u,u),\{\FI-\FP\} u\rangle
dx+\int_{\R^3} \langle \frac{1}{2}\xi\cdot \na_x \Phi, (\{\FI-\FP\}
u)^2\rangle dx\\
&&\ \ \ -\int_{\R^3} \langle \{\FI-\FP\}(\xi\cdot\na_x\FP u
+\na_x\Phi\cdot\na_\xi \FP u-\frac{1}{2}\xi\cdot\na_x\Phi \FP u),
\{\FI-\FP\} u\rangle dx,
\end{eqnarray*}
which further implies
\begin{eqnarray*}
&&\dis \frac{1}{2}\frac{d}{dt}\|\{\FI-\FP\} u\|^2+\la \|\nu^{1/2}
\{\FI-\FP\} u\|^2\\
&&\leq C\sqrt{ \CE(u(t))\CD(u(t))} \| \nu^{1/2} \{\FI-\FP\}
u\| +C \sqrt{\CE(u(t))} \| \nu^{1/2} \{\FI-\FP\} u\|^2\\
&&\ \ \ + C\|\na_x\FP u\|  \| \nu^{1/2} \{\FI-\FP\} u\|,
\end{eqnarray*}
where Lemma \ref{ineq.ga.K} and Sobolev inequality were used. Then,
\eqref{lem.ES.0} follows from the Cauchy's inequality, smallness of
$\CE(u(t))$ and the equivalence $w(\xi)\sim\nu(\xi)$. This completes
the proof of Lemma \ref{lem.ES}.
\end{proof}

Next, we consider the weighted energy estimates on $\{\FI-\FP\}u$,
whose aim is to obtain the weighted high-order Lyapunov inequalities
later.

\begin{lemma}\label{lem.ES.w}
It holds that
\begin{eqnarray}
&\dis   \frac{d}{dt}\|w^{1/2}\{\FI-\FP\} u\|^2+\la \|w \{\FI-\FP\}
u\|^2
  \leq C [1+\CE(u(t))] \CD(u(t)). \label{lem.ES.w.1}
 \end{eqnarray}
 Furthermore
\begin{eqnarray}
  &\dis \frac{d}{dt}\sum_{
1\leq |\al|\leq N }\|w^{1/2}\pa_x^\al u\|^2
  +\la \sum_{1\leq |\al|\leq N }\|w \pa_x^\al u\|^2  \nonumber\\
  &\dis \leq
C\CD(u(t)) + C \CE(u(t))\sum_{|\al|+|\be|\leq N}\| w
\pa_x^{\al}\pa_\xi^{\be}\{\FI-\FP\} u\|^2,\label{lem.ES.w.2}
\end{eqnarray}
and
\begin{eqnarray}
  &\dis \frac{d}{dt}\sum_{\substack{|\be|\geq 1 \\
|\al|+|\be|\leq N }}\|w^{1/2}\pa_x^\al\pa_\xi^\be \{\FI-\FP\} u\|^2
  +\la \sum_{\substack{|\be|\geq 1 \\
|\al|+|\be|\leq N }}\|w \pa_x^\al\pa_\xi^\be \{\FI-\FP\} u\|^2  \nonumber\\
  &\dis \leq
C\CD(u(t)) + C \CE(u(t))\sum_{|\al|+|\be|\leq N}\| w
\pa_x^{\al}\pa_\xi^{\be}\{\FI-\FP\} u\|^2,\label{lem.ES.w.3}
\end{eqnarray}
for any $t\geq 0$.

\end{lemma}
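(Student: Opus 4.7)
All three inequalities are derived by $w$-weighted $L^2$-energy estimates, exploiting the weighted coercivity of $\FL$ (standard for the hard-sphere model since $\nu(\xi)\sim w(\xi)$): for some $\la>0$,
\begin{equation*}
-\langle w\,\{\FI-\FP\}u,\,\FL\{\FI-\FP\}u\rangle_{\xi} \geq \la\,\|w\{\FI-\FP\}u\|_{L^2_\xi}^2 - C\,\|\nu^{1/2}\{\FI-\FP\}u\|_{L^2_\xi}^{2},
\end{equation*}
where the unweighted remainder is absorbed into the microscopic dissipation already built into $\CD(u(t))$. Throughout, nonlinear terms are handled with Lemma \ref{ineq.ga.K}, Sobolev embedding, and the smallness of $\CE(u(t))$.

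For \eqref{lem.ES.w.1} the plan is to test the microscopic equation \eqref{lem-micro-SVMD-p02} against $w(\xi)\{\FI-\FP\}u$ and integrate in $x,\xi$. The streaming term $\xi\cdot\na_x$ is skew and drops out; the $\FL$-term produces the main weighted damping via the coercivity above. The macroscopic source terms on the right of \eqref{lem-micro-SVMD-p02}, which involve $\FP u$, contribute quantities of the form $\|\na_x\FP u\|^{2}$ (the weight $w(\xi)$ being absorbed by $\sqrt{\FM}$ in $\FP u$), all of which are pieces of $\CD(u(t))$. The $\na_x\Phi$-induced terms and $\Ga(u,u)$ are bounded using Lemma \ref{ineq.ga.K} with $\gamma=0$ and the smallness of $\CE(u(t))$, yielding the desired $C[1+\CE(u(t))]\CD(u(t))$.

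For \eqref{lem.ES.w.2} the plan is to apply $\pa_x^\al$ with $1\leq|\al|\leq N$ directly to the full perturbed equation \eqref{VPB.p1} and test against $w(\xi)\pa_x^\al u$. Splitting $u=\FP u+\{\FI-\FP\}u$ inside the multiplier, the $\FP u$-piece is pointwise bounded by $\|\pa_x^\al\FP u\|^{2}$ since $\FP u$ carries $\sqrt{\FM}$ which absorbs any polynomial weight, and is therefore part of $\CD(u(t))$; on the $\{\FI-\FP\}u$-piece the weighted coercivity of $\FL$ supplies the dissipation $\la\|w\,\pa_x^\al\{\FI-\FP\}u\|^{2}$. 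The streaming term is skew in $x$, the linear electric forcing $\na_x\Phi\cdot\xi\sqrt{\FM}$ is absorbed using $\|\na_x\Phi\|_{H^{N}}^{2}\lesssim\CE(u(t))$, and the three nonlinear terms $\na_x\Phi\cdot\na_\xi u$, $\frac{1}{2}\xi\cdot\na_x\Phi\, u$ and $\Ga(u,u)$ produce precisely the factor $\CE(u(t))\sum_{|\al|+|\be|\leq N}\|w\,\pa_x^\al\pa_\xi^\be\{\FI-\FP\}u\|^{2}$ after an application of Lemma \ref{ineq.ga.K} and Sobolev embedding.

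For \eqref{lem.ES.w.3} the plan is induction on $|\be|\geq 1$. Applying $\pa_x^\al\pa_\xi^\be$ to \eqref{lem-micro-SVMD-p02} and testing against $w\,\pa_x^\al\pa_\xi^\be\{\FI-\FP\}u$, weighted coercivity again supplies $\la\|w\,\pa_x^\al\pa_\xi^\be\{\FI-\FP\}u\|^{2}$. \emph{The main obstacle} is the commutator $[\pa_\xi^\be,\xi\cdot\na_x]$, which generates terms of the form $\pa_x^{\al+e_j}\pa_\xi^{\be-e_j}\{\FI-\FP\}u$: these are strictly lower order in $|\be|$ but one-higher order in $|\al|$, so they are closed up inductively, using \eqref{lem.ES.w.2} as the base case $|\be|=0$ and the previously proven inequalities for smaller $|\be|$, each time absorbing a small fraction of the dissipation via Cauchy's inequality. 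The commutator $[\pa_\xi^\be,\nu(\xi)]$ loses one power of $\nu$ but is controlled by $\eta\|w\,\pa_x^\al\pa_\xi^\be\{\FI-\FP\}u\|^{2}+C_\eta\,\|\pa_x^\al\pa_\xi^{\be'}\{\FI-\FP\}u\|^{2}$ with $|\be'|<|\be|$, again absorbable by induction, and $[\pa_\xi^\be,K]$ is compact and treated similarly. All remaining macroscopic and nonlinear contributions are bounded exactly as in the previous two steps, which gives the stated right-hand side $C\CD(u(t))+C\CE(u(t))\sum_{|\al|+|\be|\leq N}\|w\,\pa_x^\al\pa_\xi^\be\{\FI-\FP\}u\|^{2}$.
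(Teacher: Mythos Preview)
Your proposal is correct and follows essentially the same route as the paper: weighted energy estimates on \eqref{lem-micro-SVMD-p02} for the zero-order and mixed-derivative parts, and on \eqref{VPB.p1} for the pure $x$-derivatives. The only cosmetic differences are that the paper writes $\FL=-\nu+K$ and uses $\nu\sim w$ directly (rather than invoking weighted coercivity of $\FL$), and it does \emph{not} set up an induction on $|\be|$ for \eqref{lem.ES.w.3}: since the commutator terms $[\pa_\xi^\be,\xi\cdot\na_x]\pa_x^\al\{\FI-\FP\}u$ and $[\pa_\xi^\be,\nu]\pa_x^\al\{\FI-\FP\}u$ have total order $\leq N$ and lose at least one weight, after Cauchy's inequality they are already bounded by $\|\{\FI-\FP\}u\|_{H^N_\nu}^2\leq C\CD(u(t))$, so the estimate closes directly.
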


\begin{proof}
The proof of \eqref{lem.ES.w.1} is similar to that of
\eqref{lem.ES.0} and hence is omitted. To prove \eqref{lem.ES.w.2},
let $1\leq |\al|\leq N$ and then it follows from \eqref{VPB.p1} that
\begin{eqnarray}
&\dis \pa_t (\pa_x^\al u)+\xi\cdot \na_x (\pa_x^\al
u)+\na_x\Phi\cdot
\na_\xi (\pa_x^\al u)+\nu \pa_x^\al u\nonumber\\
&\dis =K\pa_x^\al u+\pa_x^\al \na_x\Phi\cdot\xi\sqrt{\FM}+\pa_x^\al
\Ga(u,u)-[\pa_x^\al,\na_x\Phi\cdot \na_\xi]u,\label{lem.ES.w.p1}
\end{eqnarray}
where the last term on the r.h.s. denotes the commutator. Then,
\eqref{lem.ES.w.2} follows from multiplying the above equation by
$w\pa_x^\al u$, taking integration over $\R^3\times\R^3$ and then
using integration by parts, Cauchy's inequality, Lemma
\ref{ineq.ga.K} and Sobolev inequality.

Finally, we prove \eqref{lem.ES.w.3}. Fix  $\al,\be$ with
$|\al|+|\be|\leq N$ and $|\be|\geq 1$. For simplicity, write
$z=\pa_x^\al\pa_\xi^\be\{\FI-\FP\} u$. Then, after applying
$\pa_x^\al\pa_\xi^\be$ to \eqref{lem-micro-SVMD-p02}, $z$ satisfies
\begin{eqnarray}
  \pa_t z+ \xi\cdot \na_x z+\na_x\Phi \cdot \na_\xi z  +\nu z&=&I, \label{lem.ES.w.p2}
\end{eqnarray}
where $I$ is denoted by
\begin{equation*}
    I=I_{1}+I_{2}+I_{3}
\end{equation*}
with
\begin{eqnarray*}
I_{1} &=& \pa_\xi^\be K\pa_x^\al \{\FI-\FP\}u+\pa_x^\al\pa_\xi^\be\Ga(u,u)
+\frac{1}{2} \pa_x^\al\pa_\xi^\be (\xi\cdot\na_x\Phi \{\FI-\FP\}u),\\
  I_{2} &=&  -\pa_x^\al\pa_\xi^\be\{\FI-\FP\}(\xi\cdot\na_x\FP u +\na_x\Phi\cdot\na_\xi
\FP u-\frac{1}{2}\xi\cdot\na_x\Phi \FP u)\nonumber\\
&&-\pa_x^\al\pa_\xi^\be\FP(\xi\cdot\na_x \{\FI-\FP\} u
+\na_x\Phi\cdot\na_\xi \{\FI-\FP\} u-\frac{1}{2}\xi\cdot\na_x\Phi
\{\FI-\FP\} u),\\
I_{3} &=& -[\pa_\xi^\be,\xi\cdot\na_x]\pa_x^\al \{\FI-\FP\}
u-[\pa_\xi^\be,\nu(\xi)]\pa_x^\al \{\FI-\FP\}
u\nonumber\\
&&-[\pa_x^\al,\na_x\Phi\cdot \na_\xi]\pa_\xi^\be \{\FI-\FP\} u.
\end{eqnarray*}
Here, notice that $I_2$ contains the macroscopic projection $\FP$
which can absorb both the weight and derivative of velocity
variable, and $I_3$ contains all the commutators in which the total
order of differentiation of $\{\FI-\FP\} u$ is no more than $N$.
Then, similarly for \eqref{lem.ES.w.p1}, the direct energy estimate
of \eqref{lem.ES.w.p2} yields \eqref{lem.ES.w.3}. This also
completes the proof of Lemma \ref{lem.ES.w}.

\end{proof}

\subsection{Time decay of energy}
In this subsection, we shall prove \eqref{prop.main.1} and
\eqref{prop.main.2} in Proposition \ref{prop.main}. For this, define
two temporal functions by
\begin{equation}\label{TE.def.e01}
    \CE_\infty^{(m)}(t)=\sup_{0\leq s\leq t} (1+s)^{\frac{1}{2}+m}
    \CE(u(s)), \ \ m=0, 1,
\end{equation}
where $\CE(u(t))$ is defined by \eqref{a.prop.exi.1}. For
simplicity, also define two constants depending only on initial data
by
\begin{equation}\label{TE.def.eps}
    \eps_0=\|u_0\|_{H^N}^2+\|u_0\|_{Z_1}^2,\ \
    \eps_{0,\nu}=\|u_0\|_{H^N}^2+\|u_0\|_{Z_1}^2+\|\nu^{1/2}u_0\|^2.
\end{equation}

\medskip

\noindent{\bf {Proof of \eqref{prop.main.1} and \eqref{prop.main.2}
in Proposition \ref{prop.main}}:}\ We begin with the proof of
\eqref{prop.main.1}. Firstly recall that from \eqref{a.prop.exi.3},
one  has the energy inequality
\begin{equation}\label{TE.p00}
    \frac{d}{dt}\CE(u(t)) +\la \CD(u(t))\leq 0,
\end{equation}
where $\CD(u(t))$ is defined by \eqref{a.prop.exi.2}. By comparing
\eqref{a.prop.exi.1} with \eqref{a.prop.exi.2}, it holds that
\begin{equation*}
    \CD(u) +\|\FP_1 u\|^2+\|\na_x\De_x^{-1}\FP_0 u\|^2\geq \la
    \CE(u).
\end{equation*}
Then, it follows from \eqref{TE.p00} that
\begin{equation}\label{TE.p01}
    \frac{d}{dt}\CE(u(t)) +\la \CE(u(t))\leq  C\|\FP_1 u(t)\|^2+C \|\na_x\De_x^{-1}\FP_0
    u(t)\|^2.
\end{equation}
Also recall that the solution $u$ to the Cauchy problem
\eqref{VPB.p1}-\eqref{VPB.pID} of the nonlinear VPB system can be
written as the mild form
\begin{equation}\label{TE.p02}
    u(t)=e^{\FB t} u_0+ \int_0^t e^{\FB (t-s)} G(s) ds,
\end{equation}
where the source term $G$ given by \eqref{n.form} is rewritten as
\begin{equation*}
    G=G_1+G_2
\end{equation*}
with
\begin{eqnarray}
  G_1 = \Ga(u,u),\ \
  G_2 = -\na_x\Phi \cdot \na_\xi u +\frac{1}{2}\xi \cdot \na_x
  \Phi u. \label{TE.def.G12}
\end{eqnarray}
By checking $\FP G_1\equiv 0$ and $\FP_0 G_2 \equiv 0$, one can
decompose $G$ as
\begin{equation}\label{TE.p03}
    G=\{\FI-\FP\} G_1+ \{\FI-\FP\} G_2 + \FP_1 G_2.
\end{equation}
By applying Theorem \ref{thm.liDe} with the spatial dimension $n=3$
to \eqref{TE.p02} and using the decomposition \eqref{TE.p03}, one
has
\begin{eqnarray}
&& \|\FP_1 u(t)\|^2+ \|\na_x\De_x^{-1}\FP_0
    u(t)\|^2 \nonumber\\
&& \leq C(1+t)^{-\frac{1}{2} } (\|u_0\|_{Z_1}^2+\|u_0\|^2) \nonumber\\
&&\ \ \ +\sum_{j=1}^2 C\int_0^t
(1+t-s)^{-\frac{3}{2}}(\|\nu^{-1/2}\{\FI-\FP\} G_j(s)\|_{Z_1}^2+
\|\nu^{-1/2}\{\FI-\FP\} G_j(s)\|^2) ds \nonumber\\
&&\ \ \ +C\left[\int_0^t (1+t-s)^{-\frac{3}{4}}(\|\FP_1 G_2
(s)\|_{Z_1}+\|\FP_1 G_2(s)\|)ds\right]^2,\label{TE.p04}
\end{eqnarray}
where the first term on the r.h.s. follows from \eqref{thm.liDe.1},
the second term from \eqref{thm.liDe.3} and the third term from
\eqref{thm.liDe.2}. One can further compute those terms of $G_1,G_2$
in \eqref{TE.p04} by
\begin{eqnarray*}
&&\|\nu^{-1/2}\{\FI-\FP\} G_1\|_{Z_1}^2+ \|\nu^{-1/2}\{\FI-\FP\}
G_1\|^2\\
&&=\|\nu^{-1/2}\Ga(u,u)\|_{Z_1}^2+ \|\nu^{-1/2}\Ga(u,u)\|^2\\
&&\leq C \|\nu^{1/2} u\|^2\|u\|^2+  C  \|\nu^{1/2} u\|^2\sup_x
\|u\|_{L^2_\xi}^2\\
&&\leq C \|\nu^{1/2} u_0\|^2 \CE(u)
\end{eqnarray*}
from Lemma \ref{ineq.ga.K}, and
\begin{eqnarray*}
&&\|\nu^{-1/2}\{\FI-\FP\} G_2\|_{Z_1}^2+ \|\nu^{-1/2}\{\FI-\FP\}
G_2\|^2\\
&&\leq C \|\na_x\Phi\|_{L^2_x\cap L^\infty_x}^2(\|\na_\xi u\|^2
+\|\nu^{1/2} u\|^2)\\
&&\leq C (\CE(u_0)+\|\nu^{1/2} u_0\|^2) \CE(u),
\end{eqnarray*}
and
\begin{equation*}
\|\FP_1 G_2 \|_{Z_1}+\|\FP_1 G_2\|\leq C  \|\na_x\Phi\|_{L^2_x\cap
L^\infty_x} \|u\|\leq C \CE(u).
\end{equation*}
Here and hereafter, one can use the uniform bounds of $\CE(u(t))$
and $\|\nu^{1/2} u(t)\|$ due to
\begin{eqnarray*}
&\dis \sup_{t\geq 0} \CE(u(t))\leq \CE(u_0)\leq  C\eps_0,\\
&\dis \sup_{t\geq 0} \|\nu^{1/2} u(t)\|^2\leq C   \|\nu^{1/2}
u_0\|^2+ \CE(u_0)\leq C\eps_{0,\nu},
\end{eqnarray*}
where the time integration of \eqref{TE.p00} and \eqref{lem.ES.w.1}
was used. Plugging the above inequalities into \eqref{TE.p04}, it
follows that
\begin{eqnarray*}
 &&\|\FP_1 u(t)\|^2+ \|\na_x\De_x^{-1}\FP_0
    u(t)\|^2\\
&&\leq C (1+t)^{-\frac{1}{2}} \eps_0  + C\eps_{0,\nu} \int_0^t
(1+t-s)^{-\frac{3}{2}}
\CE(u(s))ds \nonumber \\
&&\ \ \ +C \int_0^t \left[(1+t-s)^{-\frac{3}{4}} \CE
(u(s))ds\right]^2.
\end{eqnarray*}
By the definition of $\CE_\infty^{(0)}(t)$ in \eqref{TE.def.e01}, it
further follows that
\begin{equation}\label{TE.p05}
\|\FP_1 u(t)\|^2+ \|\na_x\De_x^{-1}\FP_0
    u(t)\|^2
\leq  C (1+t)^{-\frac{1}{2}} \left\{\eps_0  +
\eps_{0,\nu}\CE_\infty^{(0)}(t)+[\CE_\infty^{(0)}(t)]^2\right\},
\end{equation}
where we used
\begin{eqnarray*}
&&\int_0^t(1+t-s)^{-\frac{3}{2}} (1+s)^{-\frac{1}{2}}ds\leq
    C(1+t)^{-\frac{1}{2}},\\
&&\int_0^t(1+t-s)^{-\frac{3}{4}} (1+s)^{-\frac{1}{2}}ds\leq
    C(1+t)^{-\frac{1}{4}}.
\end{eqnarray*}
Due to the Gronwall inequality, \eqref{TE.p01} together with
\eqref{TE.p05} yields
\begin{eqnarray*}
  \CE(u(t)) &\leq & \CE(u_0) e^{-\la t} + C\int_0^t e^{-\la
  (t-s)}\{\|\FP_1 u(s)\|^2+ \|\na_x\De_x^{-1}\FP_0
    u(s)\|^2\}ds\\
    &\leq & C (1+t)^{-\frac{1}{2}} \left\{\eps_{0}  +
\eps_{0,\nu}\CE_\infty^{(0)}(t)+[\CE_\infty^{(0)}(t)]^2\right\},
\end{eqnarray*}
which implies
\begin{equation*}
\CE_\infty^{(0)}(t)\leq C \left\{\eps_{0}  +
\eps_{0,\nu}\CE_\infty^{(0)}(t)+[\CE_\infty^{(0)}(t)]^2\right\},
\end{equation*}
for any $t\geq 0$. Therefore, as long as both $\eps_0$ and
$\eps_{0,\nu}$ are small enough, one has
\begin{equation}\label{TE.p06}
\sup_{t\geq 0}\CE_\infty^{(0)}(t)\leq C\eps_0,
\end{equation}
which proves  \eqref{prop.main.1} by the definition of
$\CE_\infty^{(0)}(t)$ in \eqref{TE.def.e01}.

Next, one can modify the above proof of \eqref{prop.main.1} to
obtain \eqref{prop.main.2} under the assumption
\eqref{prop.main.ass}. In fact, under the additional condition
\eqref{prop.main.ass}, \eqref{TE.p04} can be refined as
\begin{eqnarray}
&& \|\FP_1 u(t)\|^2+ \|\na_x\De_x^{-1}\FP_0
    u(t)\|^2 \nonumber\\
&& \leq C(1+t)^{-\frac{3}{2} } (\|u_0\|_{Z_1}^2+\|u_0\|^2) \nonumber\\
&&\ \ \ +\sum_{j=1}^2 C\int_0^t
(1+t-s)^{-\frac{3}{2}}(\|\nu^{-1/2}\{\FI-\FP\} G_j(s)\|_{Z_1}^2+
\|\nu^{-1/2}\{\FI-\FP\} G_j(s)\|^2) ds \nonumber\\
&&\ \ \ +C\left[\int_0^t (1+t-s)^{-\frac{3}{4}}(\|\FP_1 G_2
(s)\|_{Z_1}+\|\FP_1 G_2(s)\|)ds\right]^2,\label{TE.p06.1}
\end{eqnarray}
where instead of using \eqref{thm.liDe.1} to estimate the first term
on the r.h.s. of \eqref{TE.p04}, we used \eqref{thm.liDe.2} since
$u_0$ does not contain the hyperbolic component, i.e.,
$u_0=\{\FI-\FP\} u_0$. Corresponding to obtain \eqref{TE.p05}, from
the definition of $\CE_\infty^{(1)}(t)$ in \eqref{TE.def.e01},
\eqref{TE.p06.1} implies
\begin{equation}\label{TE.p07}
\|\FP_1 u(t)\|^2+ \|\na_x\De_x^{-1}\FP_0
    u(t)\|^2
\leq  C (1+t)^{-\frac{3}{2}} \left\{\eps_0  +
\eps_{0,\nu}\CE_\infty^{(1)}(t)+[\CE_\infty^{(1)}(t)]^2\right\}.
\end{equation}
Then, from the completely same proof as for \eqref{TE.p06}, one can
obtain
\begin{equation*}
\sup_{t\geq 0}\CE_\infty^{(1)}(t)\leq C\eps_0,
\end{equation*}
under the smallness condition of  $\eps_0$ and $\eps_{0,\nu}$. This
proves  \eqref{prop.main.2} by the definition of
$\CE_\infty^{(1)}(t)$ in \eqref{TE.def.e01}.

\subsection{Time decay of high-order energy}

At this time, in order to complete the proof of Proposition
\ref{prop.main}, it suffices to prove \eqref{prop.main.3} in
Proposition \ref{prop.main}, which gives the time-decay rates for
the high-order energy. First of all, let us obtain some
Lyapunov-type inequalities of the high-order energy on the basis of
Lemma \ref{lem.ES}, Lemma \ref{lem.ES.w} and Lemma \ref{a.pro.pri}
in the following two lemmas.

\begin{lemma}\label{lem.HES}
There is a high-order energy functional $\CE^{h}(u(t))$ defined by
\begin{equation}\label{lem.HES.0}
\CE^{h}(u(t))\sim \|\{\FI-\FP_1\}u(t)\|_{H^N}^2+\|\na_x \FP_1
u(t)\|_{L^2_\xi(H^{N-1}_x)}^2,
\end{equation}
such that
\begin{equation}\label{lem.HES.1}
    \frac{d}{dt}\CE^{h}(u(t))+\la \CD(u(t))\leq C\|\na_x\FP u(t)\|^2
\end{equation}
holds for any $t\geq 0$, where $\CD(u(t))$ is defined in
\eqref{a.prop.exi.2}.
\end{lemma}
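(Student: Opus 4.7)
The plan is to construct $\CE^h(u(t))$ as a weighted sum of three building blocks whose combined dissipation is coercive against $\CD(u(t))$ modulo the advertised error $\|\na_x\FP u\|^2$. The three ingredients are: (i) the zeroth-order microscopic estimate \eqref{lem.ES.0} from Lemma \ref{lem.ES}; (ii) a high-order energy estimate obtained by applying $\pa_x^\al$ (for $1\le|\al|\le N$) to the perturbed equation \eqref{VPB.p1}, pairing with $\pa_x^\al u$ in $L^2_{x,\xi}$, and invoking the Poisson identity $\De_x\Phi=a^u+3c^u$; (iii) the macroscopic dissipation functional supplied by Lemma \ref{a.pro.pri} through \eqref{a.pro.pri.5}.

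First I would carry out step (ii): the self-consistent forcing $\pa_x^\al\na_x\Phi\cdot\xi\sqrt{\FM}$ paired against $\pa_x^\al u$ reduces, by orthogonality of $\xi\sqrt{\FM}$ to $\CN^\perp$, to $-\langle \pa_x^\al\na_x\Phi,\pa_x^\al b^u\rangle$; combining with the continuity law \eqref{cl.0} and with $\De_x\Phi=a^u+3c^u$ turns it into the exact time derivative $\tfrac12\pa_t\|\na_x\pa_x^\al\Phi\|^2$. Lemma \ref{ineq.ga.K} together with Sobolev embedding bounds the remaining nonlinear pieces ($\Ga(u,u)$, the commutator $[\pa_x^\al,\na_x\Phi\cdot\na_\xi]u$, and $\xi\cdot\na_x\Phi\,u$) by $C\sqrt{\CE(u)}\,\CD(u)$. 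Summing yields
\[
\tfrac{d}{dt}\Bigl\{\sum_{1\le|\al|\le N}\|\pa_x^\al u\|^2+\|\na_x\Phi\|_{H^{N-1}}^2\Bigr\}+\la\sum_{1\le|\al|\le N}\|\pa_x^\al\{\FI-\FP\}u\|_\nu^2\le C\sqrt{\CE(u)}\,\CD(u).
\]
Adding \eqref{lem.ES.0} then restores the zeroth-order microscopic dissipation $\la\|\{\FI-\FP\}u\|_\nu^2$ at the cost of the genuine right-hand side term $C\|\na_x\FP u\|^2$, and adding a small multiple $\kappa$ of the macroscopic functional from Lemma \ref{a.pro.pri} restores the $\FP$-dissipation $\la\|\na_x\FP u\|_{H^{N-1}}^2$ (together with the Poisson-driven control of $\FP_0 u$), its microscopic error $C\|\{\FI-\FP\}u\|_{H^N_\nu}^2$ being swallowed by the dissipation already accumulated provided $\kappa$ is chosen sufficiently small.

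Taking $\CE^h$ to be this total combination, the equivalence \eqref{lem.HES.0} follows from $\{\FI-\FP_1\}u=\{\FI-\FP\}u\oplus\FP_0 u$, the fact that $\FP_0$ is an $L^2_\xi$-projection (so $\|\FP_0 u\|_{H^N}^2\le\|u\|_{H^N}^2$), and the observation that $\|\na_x\FP_1 u\|_{L^2_\xi(H^{N-1}_x)}^2$ is a sub-piece of $\sum_{1\le|\al|\le N}\|\pa_x^\al u\|^2$; conversely, $\CE^h$ is bounded by the right-hand side of \eqref{lem.HES.0} since the only zeroth-order component missing from $\|u\|_{H^N}^2+\|\na_x\Phi\|_{H^{N-1}}^2$ relative to the latter is $\|\FP_1 u\|^2$. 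The cubic remainder $C\sqrt{\CE(u)}\,\CD(u)$ is absorbed using the uniform smallness $\CE(u(t))\le\CE(u_0)$ from \eqref{smallness}, yielding \eqref{lem.HES.1}.

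The main obstacle is item (ii): the electric-field term at arbitrary derivative order must be re-expressed as a perfect time derivative, which requires using the continuity law together with the Poisson equation, and doing so in a way that does not regenerate an uncontrollable zeroth-order $\|\FP_1 u\|^2$ contribution (none can be dissipated, which is precisely why $\CE^h$ must omit it). Once this cancellation is pinned down, the rest is bookkeeping: arranging the three small coefficients in the correct hierarchical order so that every dissipative term ends up with a strictly positive constant after the absorption cascade.
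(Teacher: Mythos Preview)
Your three-ingredient strategy matches the paper's, but you are missing a fourth building block: the mixed spatial--velocity derivative energy $\CE_{x,\xi}(u(t))$ of \eqref{a.pro.pri.3def}, controlled through \eqref{a.pro.pri.3}. In this paper $H^N$ means $H^N(\R^3_x\times\R^3_\xi)$ (see the Notations paragraph), so both sides of the claimed equivalence \eqref{lem.HES.0} and the dissipation $\CD(u(t))$ in \eqref{a.prop.exi.2} contain the terms $\|\pa_x^\al\pa_\xi^\be\{\FI-\FP\}u\|^2$ with $|\be|\ge 1$, $|\al|+|\be|\le N$. Your proposed $\CE^h$ carries no $\xi$-derivatives, so the equivalence \eqref{lem.HES.0} cannot hold in the direction $\CE^h\gtrsim\|\{\FI-\FP_1\}u\|_{H^N}^2$. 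More seriously, the dissipation you accumulate from (i)+(ii)+(iii) is only $\la\sum_{|\al|\le N}\|\nu^{1/2}\pa_x^\al\{\FI-\FP\}u\|^2+\la\kappa\,\CD_{mac}$, strictly smaller than $\la\CD(u)$; hence your final absorption step, where you discard $C\sqrt{\CE(u)}\,\CD(u)$ by smallness of $\CE$, fails because the $\xi$-derivative part of $\CD(u)$ has nothing on the left to absorb into.

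The paper fixes this by setting
\[
\CE^h(u(t))=\|\{\FI-\FP\}u\|^2+\sum_{1\le|\al|\le N}\bigl(\|\pa_x^\al u\|^2+\|\pa_x^\al\na_x\Phi\|^2\bigr)+\kappa_3\,\CE_{x,\xi}(u(t))+\kappa_4\,\CE_{free}(u(t)),
\]
with $0<\kappa_3\ll\kappa_4\ll 1$, and combining \eqref{lem.ES.0}, \eqref{a.pro.pri.2}, \eqref{a.pro.pri.3}, \eqref{a.pro.pri.5} (your step (ii) is essentially \eqref{a.pro.pri.2} at $\de_\phi=0$). The order $\kappa_3\ll\kappa_4$ is needed so that the right-hand side of \eqref{a.pro.pri.3}, namely $C\sum_{|\al|\le N-1}\|\pa_x^\al\na_x(a^u,b^u,c^u)\|^2+C\sum_{|\al|\le N}\|\nu^{1/2}\pa_x^\al\{\FI-\FP\}u\|^2$, is absorbed by the macroscopic and microscopic dissipations already in hand. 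A minor bookkeeping point: the Poisson contribution produced by your step (ii) is $\sum_{1\le|\al|\le N}\|\pa_x^\al\na_x\Phi\|^2$, not $\|\na_x\Phi\|_{H^{N-1}}^2$; the $|\al|=1$ piece is what supplies $\|a^u+3c^u\|^2\sim\|\FP_0u\|^2$ at zeroth order, which your equivalence argument needs.
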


\begin{proof}
Define
\begin{eqnarray}
 \CE^{h}(u(t))  &=& \|\{\FI-\FP\} u(t)\|^2+ \sum_{1\leq |\al|\leq
 N}(\|\pa_x^\al u(t)\|^2+\|\pa_x^\al \na_x\Phi(t)\|^2) \nonumber\\
 &&+\kappa_3 \CE_{x,\xi}(u(t))+\kappa_4 \CE_{free}(u(t)),\label{lem.HES.p1}
\end{eqnarray}
where $\CE_{x,\xi}(u(t))$ and $\CE_{free}(u(t))$ are given by
\eqref{a.pro.pri.3def} and \eqref{a.pro.pri.4}, respectively, and
constants $\kappa_3$ and $\kappa_4$ to be determined later satisfies
\begin{equation}\label{lem.HES.p2}
0<\kappa_3\ll \kappa_4\ll 1.
\end{equation}
Notice from the definition \eqref{a.pro.pri.4} of $\CE_{free}(u(t))$
that
\begin{eqnarray*}
|\CE_{free}(u(t))|&\leq& C\sum_{|\al|\leq
N-1}(\|\pa_x^\al\{\FI-\FP\}
u\|^2+\|\pa_x^\al \FP_0 u\|^2+\|\pa_x^\al \na_x b^{u}\|^2)\\
&\leq &  C \|\{\FI-\FP\} u(t)\|^2+ C\sum_{1\leq |\al|\leq
 N}(\|\pa_x^\al u(t)\|^2+\|\pa_x^\al \na_x\Phi(t)\|^2),
\end{eqnarray*}
which together with \eqref{lem.HES.p1} and \eqref{a.pro.pri.3def}
imply
\begin{eqnarray*}
\CE^{h}(u(t))&\sim&  \|\{\FI-\FP\} u(t)\|^2+ \sum_{1\leq |\al|\leq
 N}(\|\pa_x^\al u(t)\|^2+\|\pa_x^\al
 \na_x\Phi(t)\|^2)+\kappa_3\CE_{x,\xi}(u(t))\\
 &\sim & \|\{\FI-\FP\} u(t)\|_{H^N}^2+ \|\FP_0 u(t)\|_{L^2_\xi(H^{N-1}_x)}^2+\|\na_x \FP_1
u(t)\|_{L^2_\xi(H^{N-1}_x)}^2\\
&\sim & \|\{\FI-\FP_1\}u(t)\|_{H^N}^2+\|\na_x \FP_1
u(t)\|_{L^2_\xi(H^{N-1}_x)}^2,
\end{eqnarray*}
by taking $\kappa_4>0$ small enough and also  letting $\kappa_3>0$.
Thus, \eqref{lem.HES.0} holds true. Moreover, under the condition
\eqref{lem.HES.p2}, the linear combination of \eqref{lem.ES.0},
\eqref{a.pro.pri.2}, \eqref{a.pro.pri.3} and \eqref{a.pro.pri.5} in
the special case when $\de_\phi=0$ from the assumption
\eqref{ass.backg} yield
\begin{equation*}
    \frac{d}{dt}\CE^{h}(u(t))+\la \CD(u(t))\leq C\|\na_x\FP u(t)\|^2
    +C (\CE(u(t))+\sqrt{\CE(u(t))})\CD(u(t)),
\end{equation*}
where $\CD(u(t))$ is given by \eqref{a.prop.exi.2}. Therefore,
\eqref{lem.HES.1} follows from the above inequality and smallness of
$\CE(u(t))$ as mentioned at the beginning of Subsection
\ref{sec.ES.sub1}. This completes the proof of Lemma \ref{lem.HES}.
\end{proof}

\begin{lemma}\label{lem.wHES}
There is a weighted high-order energy functional $\CE^{h}_w(u(t))$
and a corresponding dissipation rate $\CD_w(u(t))$, which are
defined by
\begin{eqnarray}
\CE^{h}_w(u(t))&\sim& \|\{\FI-\FP_1\}u(t)\|_{H^N_w}^2+\|\na_x \FP_1
u(t)\|_{L^2_\xi(H^{N-1}_x)}^2,\label{lem.wHES.1}\\
\CD_w(u(t)) &\sim&  \|\{\FI-\FP_1\}u(t)\|_{H^N_{w^2}}^2 + \|\na_x
\FP_1 u(t)\|_{L^2_\xi(H^{N-1}_x)}^2,\label{lem.wHES.2}
\end{eqnarray}
such that
\begin{equation}\label{lem.wHES.3}
    \frac{d}{dt}\CE^{h}_w(u(t))+\la \CD_w(u(t))\leq C\|\na_x\FP u(t)\|^2
\end{equation}
holds for any $t\geq 0$.

\end{lemma}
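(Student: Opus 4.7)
The plan is to build $\CE^h_w(u(t))$ by adding to the unweighted energy $\CE^h(u(t))$ of Lemma \ref{lem.HES} a small multiple $\kappa_5>0$ of the three weighted quantities whose evolution is controlled by Lemma \ref{lem.ES.w}, namely $\|w^{1/2}\{\FI-\FP\}u\|^2$, $\sum_{1\leq|\al|\leq N}\|w^{1/2}\pa_x^\al u\|^2$, and $\sum_{|\be|\geq 1,\,|\al|+|\be|\leq N}\|w^{1/2}\pa_x^\al\pa_\xi^\be\{\FI-\FP\}u\|^2$. To verify the equivalence \eqref{lem.wHES.1}, I would exploit the Gaussian velocity decay of $\FP_0 u$ and $\FP_1 u$ coming from the $\sqrt{\FM}$ factor, which makes weighted and unweighted macroscopic norms comparable. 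The $w^{1/2}$-weighted microscopic pieces then assemble via the macro-micro decomposition into the target $\|\{\FI-\FP_1\}u\|_{H^N_w}^2$, while $\|\na_x\FP_1 u\|_{L^2_\xi(H^{N-1}_x)}^2$ is already part of $\CE^h(u(t))$.

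Differentiating in time, I combine \eqref{lem.HES.1} with $\kappa_5$ times the sum of \eqref{lem.ES.w.1}, \eqref{lem.ES.w.2} and \eqref{lem.ES.w.3}. Writing $\mathcal{X}_w(u(t)):=\sum_{|\al|+|\be|\leq N}\|w\pa_x^\al\pa_\xi^\be\{\FI-\FP\}u\|^2$, the upgrade of the weight from $w^{1/2}$ to $w$ on the dissipation side yields
\begin{equation*}
\frac{d}{dt}\CE^h_w(u(t)) + \la\CD(u(t)) + \la\kappa_5\mathcal{X}_w(u(t)) \leq C\|\na_x\FP u(t)\|^2 + C\kappa_5[1+\CE(u(t))]\CD(u(t)) + C\kappa_5\CE(u(t))\mathcal{X}_w(u(t)).
\end{equation*}
The first error term on the right is absorbed into $\la\CD(u(t))$ by choosing $\kappa_5$ small enough that $C\kappa_5[1+\CE(u(t))]\leq \la/2$.

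The main obstacle is the last coupling term, which has exactly the structure of the newly-gained weighted dissipation $\la\kappa_5\mathcal{X}_w(u(t))$ and therefore cannot be controlled by $\CD(u(t))$ alone. I would close the loop via the uniform smallness of $\CE(u(t))$: under the standing assumption that $\|u_0\|_{H^N\cap L^2_w\cap Z_1}$ is small, the basic energy inequality \eqref{smallness} yields $\sup_{t\geq 0}\CE(u(t))\leq \CE(u_0)\ll 1$, so $C\kappa_5\CE(u(t))\leq \la\kappa_5/2$ and the coupling is absorbed into the weighted dissipation. This produces exactly \eqref{lem.wHES.3}, with $\CD_w(u(t))$ as in \eqref{lem.wHES.2}: the unweighted fluid gradient $\|\na_x\FP_1 u\|_{L^2_\xi(H^{N-1}_x)}^2$ is inherited from $\CD(u(t))$, and $\mathcal{X}_w(u(t))$ together with the Gaussian-decaying $\FP_0 u$ contribution already controlled by $\CE^h(u(t))$ supplies the full $\|\{\FI-\FP_1\}u\|_{H^N_{w^2}}^2$ norm.
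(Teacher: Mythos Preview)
Your proposal is correct and follows essentially the same approach as the paper: combine the unweighted high-order inequality \eqref{lem.HES.1} with the three weighted estimates \eqref{lem.ES.w.1}--\eqref{lem.ES.w.3}, then absorb the $\CE(u(t))\mathcal{X}_w(u(t))$ coupling via the uniform smallness of $\CE(u(t))$. The only cosmetic difference is the scaling convention: the paper sets $\CE^h_w=[\text{weighted terms}]+\kappa_5\CE^h$ with $\kappa_5$ \emph{large} (so that $\la\kappa_5\CD$ swallows the $C[1+\CE]\CD$ errors), whereas you set $\CE^h_w=\CE^h+\kappa_5[\text{weighted terms}]$ with $\kappa_5$ \emph{small}; these are equivalent after an overall rescaling.
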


\begin{proof}
Define
\begin{eqnarray*}
\CE^{h}_w(u(t)) &=& \|w^{1/2}\{\FI-\FP\}u(t)\|^2+\sum_{1\leq
|\al|\leq N}\|w^{1/2}\pa_x^\al u(t)\|^2\\
&&+\sum_{\substack{|\be|\geq 1\\
|\al|+|\be|\leq N }}\|w^{1/2}\pa_x^\al\pa_\xi^\be \{\FI-\FP\}
u\|^2+\kappa_5 \CE^{h}(u(t)),
\end{eqnarray*}
where $\CE^{h}(u(t))$ is given by \eqref{lem.HES.0} and $\kappa_5>0$
is a constant to be determined later. By using \eqref{lem.HES.0}, it
is straightforward to check
\begin{eqnarray*}
\CE^{h}_w(u(t)) &\sim& \|\{\FI-\FP\}u(t)\|_{H^N_{w^2}}^2+\|\na_x \FP
u(t)\|_{L^2_\xi(H^{N-1}_x)}^2+\kappa_5 \CE^{h}(u(t))\\
 &\sim& \|\{\FI-\FP\}u(t)\|_{H^N_{w^2}}^2+\|\na_x \FP
u(t)\|_{L^2_\xi(H^{N-1}_x)}^2+\|\FP_0 u(t)\|_{H^N}^2\\
&\sim& \|\{\FI-\FP_1\}u(t)\|_{H^N_{w^2}}^2+\|\na_x \FP_1
u(t)\|_{L^2_\xi(H^{N-1}_x)}^2,
\end{eqnarray*}
which implies \eqref{lem.wHES.1}. The rest is to verify
\eqref{lem.wHES.3}. Actually, by taking $\kappa_5>0$ large enough,
the linear combination of \eqref{lem.ES.w.1}, \eqref{lem.ES.w.2},
\eqref{lem.ES.w.3} and \eqref{lem.HES.1} yields
\begin{equation}\label{lem.wHES.p1}
    \frac{d}{dt}\CE^{h}_w(u(t))+\la \CD_w(u(t))\leq C\|\na_x\FP
    u(t)\|^2+ C \CE(u(t)) \|\{\FI-\FP\}u(t)\|_{H^N_{w^2}}^2,
\end{equation}
where $ \CD_w(u(t))$ takes the form
\begin{eqnarray}
 \CD_w(u(t))  &=& \|w \{\FI-\FP\} u(t)\|^2+\sum_{1\leq |\al|\leq N }\|w \pa_x^\al
 u(t)\|^2 \nonumber\\
 &&+\sum_{\substack{|\be|\geq 1\\
|\al|+|\be|\leq N }}\|w \pa_x^\al\pa_\xi^\be \{\FI-\FP\}
u(t)\|^2+\CD(u(t)).\label{lem.wHES.p2}
\end{eqnarray}
From the definition \eqref{a.prop.exi.2} of $\CD(u(t))$,
\eqref{lem.wHES.p2} implies  that \eqref{lem.wHES.2} holds true.
Since for the second term on the r.h.s. of \eqref{lem.wHES.p1} it
holds that
\begin{equation*}
 \CE(u(t)) \|\{\FI-\FP\}u(t)\|_{H^N_{w^2}}^2\leq
 C\CE(u(t))\CD_w(u(t)),
\end{equation*}
then \eqref{lem.wHES.3} follows  from \eqref{lem.wHES.p1} and
smallness of $\CE(u(t))$ again due to \eqref{smallness} and
smallness of $\CE(u_0)$. This completes the proof of Lemma
\ref{lem.wHES}.
\end{proof}

\medskip

Now, we are in a position to prove \eqref{prop.main.3}. For this, as
before, define a temporal function by
\begin{equation}\label{wHES.def.infty}
\CE^h_{w,\infty}(t)=\sup_{0\leq s\leq
t}(1+s)^{2(\frac{3}{4}-\eps)}\CE^h_w(u(s)),
\end{equation}
where $0<\eps\leq 3/4$ is an arbitrary constant, and $\CE^h_w(u)$ is
given by \eqref{lem.wHES.1}. For simplicity, also denote a constant
$\eps_1$ depending only on initial data by
\begin{equation}\label{wHES.def.eps1}
    \eps_1=\|u_0\|_{H^N_{w}}^2+\|u_0\|_{Z_1}^2.
\end{equation}
Notice $\CE^h_w(u_0)\leq C\eps_1$ which will be used later.

\medskip

\noindent{\bf {Proof of \eqref{prop.main.3} in Proposition
\ref{prop.main}}:}\ We begin with the Lyapunov-type inequality
\eqref{lem.wHES.3} for the weighted high-order energy functional
$\CE^h_w(u(t))$. Since
\begin{equation*}
  \CE^h_w(u(t))\leq C \CD_w(u(t))
\end{equation*}
holds by definitions \eqref{lem.wHES.1}-\eqref{lem.wHES.2} of
$\CE^h_w(u(t))$ and $\CD_w(u(t))$, \eqref{lem.wHES.3} implies
\begin{equation*}
    \frac{d}{dt}\CE^{h}_w(u(t))+\la \CE^{h}_w(u(t))\leq C\|\na_x\FP
    u(t)\|^2.
\end{equation*}
From the Gronwall inequality, it follows that
\begin{equation}\label{wHES.p1}
\CE^{h}_w(u(t))\leq \CE^{h}_w(u_0) e^{-\la t} +C \int_0^t e^{-\la
(t-s)}\|\na_x\FP u(s)\|^2ds.
\end{equation}
Notice that \eqref{wHES.p1} together with \eqref{prop.main.1} imply
that $\CE^{h}_w(u(t))$ has at least the same time-decay rate with
the total energy $\CE(u(t))$, that is,
\begin{equation}\label{wHES.p2}
\sup_{t\geq 0}(1+t)^{\frac{1}{2}}\CE^{h}_w(u(t))\leq C \eps_1 ,
\end{equation}
if conditions of \eqref{prop.main.1} hold and $\|u_0\|_{H^N_w}$ is
bounded. In what follows, we shall improve \eqref{wHES.p2} up to the
almost optimal rate in the sense that for any $0<\eps\leq 3/4$,
there is $\eta>0$ depending only on $\eps$ such that whenever
$\eps_0\leq \eta^2$ with $\eps_0$ defined in \eqref{TE.def.eps}, one
has
\begin{equation}\label{wHES.p3}
\sup_{t\geq 0}(1+t)^{2(\frac{3}{4}-\eps)}\CE^{h}_w(u(t))\leq C
\eps_1 ,
\end{equation}
which implies \eqref{prop.main.3}. In fact, one can obtain a formal
time-decay estimate on the first-order energy $\|\na_x\FP u(t)\|^2$
in terms of $\CE^h_{w,\infty}(t)$ in the same way as in
\eqref{TE.p05} or \eqref{TE.p07}. Firstly, similar to get
\eqref{TE.p04}, by applying Theorem \ref{thm.liDe} to \eqref{TE.p02}
with the help of the decomposition \eqref{TE.p03} for the source
term $G$, it follows that
\begin{eqnarray}
&\dis \|\na_x \FP u(t)\|^2
\leq C(1+t)^{-\frac{3}{2} } (\|u_0\|_{Z_1}^2+\|\na_x u_0\|^2) \nonumber\\
&\dis +\sum_{j=1}^2 C\int_0^t
(1+t-s)^{-\frac{3}{2}}(\|\nu^{-1/2}\na_x\{\FI-\FP\}
G_j(s)\|_{Z_1}^2+
\|\nu^{-1/2}\na_x\{\FI-\FP\} G_j(s)\|^2) ds \nonumber\\
&\dis +C\left[\int_0^t (1+t-s)^{-\frac{3}{4}}(\|\na_x \FP_1 G_2
(s)\|_{Z_1}+\|\na_x \FP_1 G_2(s)\|)ds\right]^2,\label{wHES.p4}
\end{eqnarray}
where $G_1$ and $G_2$ are defined in \eqref{TE.def.G12}. Next, it is
straightforward to check
\begin{eqnarray*}
&\dis \|\nu^{-1/2}\na_x\{\FI-\FP\} G_j(t)\|_{Z_1}^2+
\|\nu^{-1/2}\na_x\{\FI-\FP\} G_j(t)\|^2\\
&\dis \leq C \CE_w^h(u(t))[\CE_w^h(u(t))+\CE(u(t))]
\end{eqnarray*}
for $j=1,2$, and
\begin{equation*}
  \|\na_x \FP_1 G_2
(t)\|_{Z_1}+\|\na_x \FP_1 G_2(t)\|\leq C [
\CE_w^h(u(t))\CE(u(t))]^{\frac{1}{2}}.
\end{equation*}
Plugging the above inequalities into \eqref{wHES.p4} gives
\begin{eqnarray}
 \|\na_x \FP u(t)\|^2
&\leq & C(1+t)^{-\frac{3}{2} } (\|u_0\|_{Z_1}^2+\|\na_x u_0\|^2) \nonumber\\
&&\dis + C \int_0^t
(1+t-s)^{-\frac{3}{2}}\CE_w^h(u(s))[\CE_w^h(u(s))+\CE(u(s))]ds \nonumber\\
&&\dis +C\left[\int_0^t (1+t-s)^{-\frac{3}{4}}[
\CE_w^h(u(s))\CE(u(s))]^{\frac{1}{2}}ds\right]^2. \label{wHES.p5}
\end{eqnarray}
Notice that from the time integrations of \eqref{TE.p00} and
\eqref{lem.HES.1} as well as the definition \eqref{wHES.def.eps1} of
$\eps_1$, it holds that
\begin{equation*}
\sup_{t\geq 0}[\CE_w^h(u(t))+\CE(u(t))]\leq C \eps_1,
\end{equation*}
and also recall from \eqref{TE.p06}, \eqref{TE.def.e01} and
\eqref{TE.def.eps} that
\begin{equation*}
    \CE(u(t))\leq C \eps_0 (1+t)^{-\frac{1}{2}},
\end{equation*}
where $\eps_0$ is defined in \eqref{TE.def.eps}. Then, it follows
from \eqref{wHES.p5} that
\begin{eqnarray*}
 \|\na_x \FP u(t)\|^2
&\leq & C(1+t)^{-\frac{3}{2} } (\|u_0\|_{Z_1}^2+\|\na_x u_0\|^2)\\
&&\dis + C \eps_1\int_0^t
(1+t-s)^{-\frac{3}{2}}\CE_w^h(u(s))ds\\
&&\dis +C\eps_0\left[\int_0^t
(1+t-s)^{-\frac{3}{4}}(1+s)^{-\frac{1}{4}}[
\CE_w^h(u(s))]^{\frac{1}{2}}ds\right]^2,
\end{eqnarray*}
which further from the definition \eqref{wHES.def.infty} of
$\CE^h_{w,\infty}(t)$ implies
\begin{eqnarray}
 \|\na_x \FP u(t)\|^2
&\leq & C(1+t)^{-\frac{3}{2} } (\|u_0\|_{Z_1}^2+\|\na_x u_0\|^2) \nonumber\\
&&\dis + C \eps_1\CE^h_{w,\infty}(t)\int_0^t
(1+t-s)^{-\frac{3}{2}}(1+s)^{-2(\frac{3}{4}-\eps)}ds \nonumber\\
&&\dis +C \eps_0\CE^h_{w,\infty}(t) \left[\int_0^t
(1+t-s)^{-\frac{3}{4}}(1+s)^{-\frac{1}{4}-(\frac{3}{4}-\eps)}ds\right]^2.
 \label{wHES.p6}
\end{eqnarray}
Notice that for $0<\eps\leq 3/4$,
\begin{eqnarray*}
&& \int_0^t (1+t-s)^{-\frac{3}{2}}(1+s)^{-2(\frac{3}{4}-\eps)}ds\leq
C(1+t)^{-2(\frac{3}{4}-\eps)},\\
&&\int_0^t
(1+t-s)^{-\frac{3}{4}}(1+s)^{-\frac{1}{4}-(\frac{3}{4}-\eps)}ds\leq
C_\eps (1+t)^{-(\frac{3}{4}-\eps)},
\end{eqnarray*}
where $C>0$ in the first inequality can be taken uniformly in $0
<\eps\leq 3/4$, while $C_\eps>0$ in the second inequality has to
tend to infinity as $\eps$ goes to zero. Then, it follows from
\eqref{wHES.p6} that
\begin{equation*}
\|\na_x \FP u(t)\|^2 \leq  (1+t)^{-(\frac{3}{2}-2\eps) }[ C\eps_0 +
(C\eps_1+C_\eps\eps_0)\CE^h_{w,\infty}(t)].
\end{equation*}
Combined with the  above estimate, \eqref{wHES.p1} gives
\begin{equation*}
\CE^{h}_w(u(t))\leq  (1+t)^{-(\frac{3}{2}-2\eps) }[ C\eps_1 +
(C\eps_1+C_\eps\eps_0)\CE^h_{w,\infty}(t)],
\end{equation*}
for any $t\geq 0$, where $\max\{\CE^{h}_w(u_0),\eps_0\}\leq C
\eps_1$ was used. Thus, it follows that
\begin{equation*}
\CE^h_{w,\infty}(t)\leq  C\eps_1 +
(C\eps_1+C_\eps\eps_0)\CE^h_{w,\infty}(t),
\end{equation*}
that is,
\begin{equation*}
\CE^h_{w,\infty}(t)\leq  C\eps_1 + C_\eps\eps_0\CE^h_{w,\infty}(t),
\end{equation*}
since $\eps_1$ is small enough. Therefore, given any $0<\eps\leq
3/4$, one can choose $\eta=1/\sqrt{2C_\eps}$ so that whenever
$\eps_0\leq \eta^2$, it holds that
\begin{equation*}
\CE^h_{w,\infty}(t)\leq  C\eps_1,
\end{equation*}
for any $t\geq 0$, which proves \eqref{wHES.p3} and hence
\eqref{prop.main.3} by the definition \eqref{wHES.def.infty} of
$\CE^h_{w,\infty}(t)$. This also completes the proof of Proposition
\ref{prop.main}.

\section{Appendix}\label{sec.app}

In this appendix, we  shall state the existence of the stationary
solution and its nonlinear stability for the VPB system
\eqref{VPB.1}-\eqref{VPB.2}. For that, let us define the weighted
norm $\|\cdot\|_{W^{m,\infty}_\theta}$ by
\begin{equation*}
    \|g\|_{W^{m,\infty}_\theta}=\sup_{x\in\R^3} (1+|x|)^\theta \sum_{|\al|\leq
    m} |\pa_x^\al g(x)|,
\end{equation*}
for suitable $g=g(x)$ and for an integer $m\geq 0$ and $\theta\geq
0$. \cite{DY-09VPB} proved

\begin{proposition}[existence of stationary solutions]\label{prop.ss}
Let the integer $m\geq 0$ and $\theta\geq 0$.  Suppose that
$\|\bar{\rho}-1\|_{W^{m,\infty}_\theta}$ is small enough. Then the
following elliptic equation with exponential nonlinearity:
\begin{equation*}
    \De_x\phi=e^\phi-\bar{\rho}(x),
\end{equation*}
admits a unique solution $\phi=\phi(x)$ satisfying
\begin{equation*}
\|\phi\|_{W^{m,\infty}_\theta}\leq
C\|\bar{\rho}-1\|_{W^{m,\infty}_\theta},
\end{equation*}
for some constant $C$.

\end{proposition}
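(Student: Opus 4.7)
The plan is to reformulate the problem as a fixed-point equation and apply the Banach contraction principle in the ball of small radius inside $W^{m,\infty}_\theta$. Writing $e^\phi = 1 + \phi + N(\phi)$ with $N(\phi) = e^\phi - 1 - \phi$, the equation $\De_x\phi = e^\phi - \bar{\rho}$ becomes
\begin{equation*}
 (\De_x - 1)\phi = -(\bar{\rho}-1) + N(\phi).
\end{equation*}
On $\R^3$, the operator $\De_x - 1$ is invertible with Green's function given (up to sign) by the Yukawa potential $G(x)=\frac{e^{-|x|}}{4\pi|x|}$, whose exponential decay at infinity will be the key to closing weighted $L^\infty$ estimates.

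First I would establish the linear weighted estimate: for any integer $m\geq 0$ and $\theta\geq 0$,
\begin{equation*}
\|(\De_x - 1)^{-1} f\|_{W^{m,\infty}_\theta}\leq C \|f\|_{W^{m,\infty}_\theta}.
\end{equation*}
This follows by writing $(\De_x-1)^{-1}f = -G\ast f$, splitting the convolution according to whether $|x-y|\leq |x|/2$ or $|x-y|\geq |x|/2$, and using that in the former region $(1+|y|)\sim(1+|x|)$, while in the latter $G(x-y)$ carries exponential decay that absorbs the polynomial weight $(1+|x|)^\theta$. Derivatives are handled either by passing them onto $f$ (for $|\al|\leq m$) or by using the integrability of $\na G$ and standard interior elliptic regularity.

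Next I would verify the nonlinear estimate. Since $W^{m,\infty}(\R^3)$ is a Banach algebra under the Leibniz rule, and the polynomial weight behaves multiplicatively, for $\|\phi\|_{W^{m,\infty}_\theta}\leq 1$ one has
\begin{equation*}
 \|N(\phi)\|_{W^{m,\infty}_\theta}\leq C\|\phi\|_{W^{m,\infty}_\theta}^2,\qquad
 \|N(\phi_1)-N(\phi_2)\|_{W^{m,\infty}_\theta}\leq C(\|\phi_1\|_{W^{m,\infty}_\theta}+\|\phi_2\|_{W^{m,\infty}_\theta})\|\phi_1-\phi_2\|_{W^{m,\infty}_\theta},
\end{equation*}
by expanding $N(\phi)=\sum_{k\geq 2}\phi^k/k!$ and using the algebra property term by term. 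Combining with the linear bound, the map $T\phi := (\De_x-1)^{-1}[-(\bar{\rho}-1)+N(\phi)]$ satisfies $\|T\phi\|_{W^{m,\infty}_\theta}\leq C(\de + r^2)$ and $\|T\phi_1-T\phi_2\|_{W^{m,\infty}_\theta}\leq Cr\|\phi_1-\phi_2\|_{W^{m,\infty}_\theta}$ on the closed ball of radius $r$, where $\de = \|\bar{\rho}-1\|_{W^{m,\infty}_\theta}$. Choosing $r = 2C\de$ with $\de$ small enough so that $Cr<1/2$, the map $T$ is a contraction on this ball, and the Banach fixed point theorem produces a unique solution $\phi$ with $\|\phi\|_{W^{m,\infty}_\theta}\leq 2C\de$, yielding the claimed bound.

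The main obstacle is the linear weighted $L^\infty$ estimate for the Yukawa convolution; everything else is algebra. The subtlety is that the weight $(1+|x|)^\theta$ is not translation invariant, so one must carefully use the rapid exponential decay of $G$ to transfer the weight from $x$ to $y$ in the region $|x-y|\geq |x|/2$ and exploit weight equivalence in the near region; once this is in hand, the rest of the argument is a standard small-data fixed-point scheme.
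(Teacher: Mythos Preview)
The paper does not supply its own proof of this proposition: it is stated in the appendix as a result quoted from \cite{DY-09VPB}, with no argument given here. So there is nothing in the present paper to compare your proof against.

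That said, your scheme is sound and is the natural route one would expect the cited reference to take. Linearizing $e^\phi$ about $0$ to rewrite the equation as $(\De_x-1)\phi=-(\bar\rho-1)+N(\phi)$ and inverting via the Yukawa kernel is the standard move; the exponential decay of $G$ is exactly what makes the polynomial-weighted $L^\infty$ estimate close, and your near/far splitting of the convolution is the right way to see it. The algebra property of $W^{m,\infty}_\theta$ (in fact products gain an extra factor of decay, which only helps) takes care of $N(\phi)$ and its Lipschitz bound, and the contraction-mapping conclusion is routine. The one place to be slightly careful is regularity at top order: rather than differentiating $G$ (whose second derivatives are only distributions), pass all $\pa_x^\al$ with $|\al|\le m$ onto $f$ inside the convolution, which your proposal already indicates. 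With that understood, your argument delivers the stated existence, uniqueness in the small ball, and the bound $\|\phi\|_{W^{m,\infty}_\theta}\le C\|\bar\rho-1\|_{W^{m,\infty}_\theta}$.
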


From Proposition \ref{prop.ss} above, it is straightforward to check
that the VPB system \eqref{VPB.1}-\eqref{VPB.2} has a stationary
solution $(f_\ast,\Phi_\ast)$ given by $f_\ast=e^\phi\FM$,
$\Phi_\ast=\phi$.  To state the stability of the stationary state
$(f_\ast,\Phi_\ast)$, set the perturbation $u=u(t,x,\xi)$ by
\begin{equation*}
    f=e^\phi\FM+\sqrt{\FM}u.
\end{equation*}
Then $u$ satisfies the perturbed system:
\begin{equation}\label{A.CP}
    \left\{\begin{array}{l}
     \dis \pa_t u+\xi\cdot\na_x u+\na_x(\Phi+\phi)\cdot\na_\xi
u-\frac{1}{2}\xi\cdot\na_x (\Phi+\phi) u -\xi\cdot\na_x\Phi e^\phi
\sqrt{\FM}\\[3mm]
\dis\hspace{4cm}=e^\phi\FL u+\Ga(u,u),\\[3mm]
\dis \Phi=-\frac{1}{4\pi|x|}\ast \int_{\R^3} \sqrt{\FM}ud\xi,\ \
(t,x,\xi)\in (0,\infty)\times\R^3\times\R^3,
    \end{array}\right.
\end{equation}
with given initial data
\begin{equation}\label{A.CP.ID}
    u(0,x,\xi)=u_0(x,\xi)\equiv \frac{f_0-e^\phi\FM}{\sqrt{\FM}},\ \
    (x,\xi)\in\R^3\times\R^3.
\end{equation}
Here $\FL u$ and $\Ga(u,u)$ are defined by in the same forms as in
\eqref{def.L} and \eqref{def.Ga}, respectively. \cite{DY-09VPB} also
proved

\begin{proposition}[stability of stationary solutions]\label{a.prop.exi}
Let $N\geq 4$. Suppose that $\|\bar{\rho}-1\|_{W^{N+1,\infty}_2}$ is
small enough. Then, there are the equivalent energy functional
$\CE(\cdot)$ and the corresponding energy dissipation rate
$\CD(\cdot)$ defined by
\begin{eqnarray}
\CE(u(t))&\sim & \|u\|_{H^N}^2
  +\| \na_x \De_x^{-1}\FP_0u\|^2,\label{a.prop.exi.1}\\
\CD(u(t))&\sim &  \|\{\FI-\FP_1\}u\|_{H^N_\nu}^2 + \|\na_x \FP_1
u\|_{L^2_\xi(H^{N-1}_x)}^2,\label{a.prop.exi.2}
\end{eqnarray}
such that the following holds. If $f_0=e^\phi\FM+\sqrt{\FM}u_0\geq
0$ and $\CE(u_0)$ is sufficiently small, then  the Cauchy problem
\eqref{A.CP}-\eqref{A.CP.ID} of the VPB system admits a unique
global solution $u(t,x,\xi)$ satisfying $f(t,x,\xi)\equiv
e^\phi\FM+\sqrt{\FM}u(t,x,\xi)\geq 0$, and
\begin{equation}\label{a.prop.exi.3}
    \frac{d}{dt}\CE(u(t))+\la \CD(u(t))\leq 0.
\end{equation}

\end{proposition}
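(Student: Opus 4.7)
The plan is to prove this via the standard local-to-global continuation: establish local existence by a linearized iteration, derive uniform \emph{a priori} energy estimates that give $\frac{d}{dt}\CE(u)+\la\CD(u)\leq 0$ whenever $\CE(u)$ stays small, and then patch together. Since the stationary profile $(f_\ast,\Phi_\ast)=(e^\phi\FM,\phi)$ is a small perturbation of $(\FM,0)$ (by Proposition \ref{prop.ss}), all coefficients depending on $\phi$ can be treated as frozen small perturbations of the $\phi\equiv 0$ case; the nontrivial structure is the same as for \eqref{VPB.p1}--\eqref{VPB.p2}.

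For local existence, I would iterate on the linear problem obtained by freezing $(u,\Phi)$ on the transport and Poisson terms, solve the linear kinetic equation by characteristics plus the $-\FL$ dissipation, recover $\Phi^{n+1}$ from $u^{n+1}$ via convolution with $-1/(4\pi|x|)$, and show contraction in an appropriate $H^N$-type norm. Positivity $f\geq 0$ is preserved along the iteration by a maximum-principle argument for the equation written in terms of $f$ itself, using that the loss term in $Q$ is multiplicative.

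For the \emph{a priori} estimates, which are the heart of the proof, I would proceed in three layers. First, pure $x$-derivative estimates: for $|\al|\leq N$ apply $\pa_x^\al$ to \eqref{A.CP}, take the $L^2$ inner product with $\pa_x^\al u$, use the coercivity $-\langle \FL v,v\rangle\gtrsim\|\nu^{1/2}\{\FI-\FP\}v\|^2$, control commutators and the nonlinear $\Ga$ term via Lemma \ref{ineq.ga.K} and Sobolev embedding, and integrate by parts on $\na_x\Phi\cdot\na_\xi u$ to trade a $\xi$-derivative for a factor of $\xi$. The cross term $-\xi\sqrt{\FM}\cdot\na_x\Phi$ with $\pa_x^\al u$ contributes, after using the Poisson equation $\De_x\Phi=\int\sqrt{\FM}u\,d\xi=\FP_0u$-component, the lowest-order quantity $\|\na_x\pa_x^\al \De_x^{-1}\FP_0u\|^2$, which is exactly what forces $\|\na_x\De_x^{-1}\FP_0u\|^2$ into $\CE(u)$. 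Second, one needs macroscopic dissipation for $\FP_1 u$: this is recovered in the spirit of Kawashima by using the balance laws \eqref{cl.0}--\eqref{cl.Poisson}, \eqref{HM1}--\eqref{HM4.no} to build a free-energy functional whose time derivative produces $\|\na_x\FP_1 u\|_{H^{N-1}}^2$ (this is the hyperbolic-parabolic mechanism referenced in the paper). Third, combine: the microscopic dissipation from $\FL$ plus the macroscopic dissipation from the interaction functional give $\CD(u)$ as defined in \eqref{a.prop.exi.2}, with the nonlinear remainder absorbed by $C\sqrt{\CE(u)}\,\CD(u)$ thanks to smallness.

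The main obstacle will be the absence of a pointwise dissipation for $\FP_0 u$ itself. The equation for the total density $a^u+3c^u$ is a pure transport equation with source $\na_x\cdot b^u$, so no coercive term on $\|\FP_0 u\|$ appears directly from the energy identity. The only way to retrieve information on the density is through $-\na_x\Phi$ in the momentum equation combined with the Poisson relation $\De_x\Phi=\FP_0u$; this yields coercivity on $\|\na_x\Phi\|\sim\|\na_x\De_x^{-1}\FP_0 u\|$ rather than on $\|\FP_0u\|$, and that explains the asymmetry between $\CE$ and $\CD$ in the statement. Handling this asymmetry cleanly — in particular, ensuring that the cross term $(\xi\sqrt{\FM}\cdot\na_x\Phi,u)$ is sign-definite after integration by parts and that the resulting $\|\na_x\Phi\|^2$ is preserved through $x$-differentiation — is the most delicate bookkeeping step, after which the continuation argument closes the proof in a standard way.
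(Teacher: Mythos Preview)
The paper does not prove Proposition~\ref{a.prop.exi}; it is quoted from \cite{DY-09VPB} in the appendix. The only trace of the argument in the present paper is Lemma~\ref{a.pro.pri}, which records the four blocks of \emph{a priori} estimates from \cite{DY-09VPB}: (i) zero order, (ii) pure $x$-derivatives, (iii) mixed $x$-$\xi$ derivatives on $\{\FI-\FP\}u$, and (iv) the Kawashima-type free-energy functional $\CE_{free}$ giving macroscopic dissipation. Your outline matches (i), (ii), and (iv) well, and your diagnosis of why $\|\na_x\De_x^{-1}\FP_0 u\|^2$ enters $\CE$ but $\FP_0 u$ has no direct dissipation in $\CD$ is exactly right.

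There is one genuine omission. Your three layers do not include the mixed spatial-velocity derivative estimates, i.e.\ the analogue of Lemma~\ref{a.pro.pri}(iii). Since $H^N=H^N(\R^3_x\times\R^3_\xi)$, controlling $\sum_{|\al|+|\be|\le N,\,|\be|\ge 1}\|\pa_x^\al\pa_\xi^\be u\|^2$ is mandatory to close the energy. These estimates cannot be done on $u$ directly because $\pa_\xi^\be$ hitting $\FP u$ produces polynomial growth in $\xi$; one must apply $\pa_x^\al\pa_\xi^\be$ to the equation for $\{\FI-\FP\}u$ (your \eqref{lem-micro-SVMD-p02}), handle the commutators $[\pa_\xi^\be,\xi\cdot\na_x]$ and $[\pa_\xi^\be,\nu]$, and absorb the resulting lower-order terms using the dissipation already obtained from layers (i), (ii), (iv). Without this block the functional $\CE(u)$ in \eqref{a.prop.exi.1} is not actually controlled. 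A smaller point: the zero-order identity in Lemma~\ref{a.pro.pri}(i) carries a cubic correction $-2\int e^{-\phi}|b^u|^2 c^u\,dx$ in the energy, arising from the interaction of the $\frac{1}{2}\xi\cdot\na_x(\Phi+\phi)u$ term with $\FP u$; this is what makes the $L^2$ identity close without a bad sign, and is worth flagging in any write-up.
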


The following lemma was obtained by \cite{DY-09VPB} in the proof of
uniform {\it a priori} estimates for  the global-in-time stability
of the stationary solution.

\begin{lemma}[{\it a priori} estimates]\label{a.pro.pri}
Let all conditions  of Proposition \ref{a.prop.exi} hold and let $u$
be the corresponding solution, and $(a^u,b^u,c^u)$ be defined in
\eqref{def.a}-\eqref{def.c}. Denote
$\de_\phi=\|\phi\|_{W^{N+1,\infty}_2}$. Then, the following uniform
{\it a priori} estimates hold for any $t\geq 0$:

\medskip

\noindent (i) zero-order:
\begin{eqnarray}
 &&\dis  \frac{d}{dt}\left(\|u\|^2+\|\na_x \Phi\|^2-2 \int_{\R^3} e^{-\phi}|b^u|^2 c^u\, dx\right)
 +\la \iint_{\R^3\times\R^3}
  \nu(\xi)|\{\FI-\FP\}u|^2dxd\xi\nonumber\\
 &&\dis  \leq C(\de_\phi+\sqrt{\CE(u(t))})\CD(u(t));\label{a.pro.pri.1}
\end{eqnarray}

\medskip

\noindent (ii) spatial derivatives:
\begin{eqnarray}
&&\frac{d}{dt}\sum_{1\leq |\al|\leq N}\left(\|\pa_x^\al u\|^2+
\|\pa_x^\al\na_x\Phi\|^2\right)+\la\sum_{1\leq|\al|\leq N}
\iint_{\R^3\times\R^3} \nu(\xi) |\pa_x^\al\{\FI-\FP\}u|^2dxd\xi\nonumber\\
&&\leq C(\de_\phi+\sqrt{\CE(u(t))})\CD(u(t));\label{a.pro.pri.2}
\end{eqnarray}

\medskip

\noindent (iii) mixed spatial-velocity derivatives:
\begin{eqnarray}
&&\dis \frac{d}{dt}\CE_{x,\xi}(u(t))+\la\sum_{\substack{|\be|\geq 1 \nonumber\\
|\al|+|\be|\leq N}}\iint_{\R^3\times\R^3} \nu(\xi) |\pa_x^\al\pa_\xi^\be\{\FI-\FP\}u|^2dxd\xi\nonumber\\
&&\leq C(\de_\phi+\sqrt{\CE(u(t))})\CD(u(t))\nonumber\\
&&\dis\ \ \ +C\sum\limits_{ |\al|\leq
N-1}\|\pa_x^\al\na_x(a^u,b^u,c^u)\|^2+C\sum_{|\al|\leq
N}\|\nu^{1/2}\pa_x^\al\{\FI-\FP\} u\|^2,\label{a.pro.pri.3}
\end{eqnarray}
where
\begin{equation}\label{a.pro.pri.3def}
\CE_{x,\xi}(u(t))=\sum_{k=1}^NC_{N,k}\sum_{\substack{|\be|=k\\
|\al|+|\be|\leq N }}\|\pa_x^\al\pa_\xi^\be \{\FI-\FP\} u\|^2,
\end{equation}
for some proper positive constants $C_{N,k}$;

\medskip

\noindent (iv) macroscopic dissipation: There is a temporal free
energy functional $\CE_{free}(u(t))$ in the form of
\begin{eqnarray}
 \CE_{free}(u(t)) &=& \kappa_0\sum_{|\al|\leq N-1}\sum_{ij}\langle
A_{ij}(\pa_x^\al \{\FI-\FP\}u), \pa_x^\al (\pa_ib_j^u+\pa_j
b_i^u)\rangle \nonumber\\
&&+ \kappa_0\sum_{|\al|\leq N-1}\sum_{j}\langle A_{jj}(\pa_x^\al
\{\FI-\FP\}u), \pa_x^\al\pa_jb_j^u\rangle  \nonumber\\
&&+\kappa_0\sum_{|\al|\leq N-1}\sum_{i}\langle B_i(\pa_x^\al
\{\FI-\FP\}u),\pa_x^\al
\pa_i c^u\rangle \nonumber\\
&&-\sum_{|\al|\leq N-1}\langle \pa_x^\al  (a^u+3c^u),\pa_x^\al
\na_x\cdot b^u\rangle \label{a.pro.pri.4}
\end{eqnarray}
for some constant $\kappa_0>0$, such that one has
\begin{eqnarray}
\frac{d}{dt} \CE_{free}(u(t))&+&\la \CD_{mac}(u(t))\leq
C\sum_{|\al|\leq N}\|\pa_x^\al
\{\FI-\FP\}u\|^2+\CE(u(t))\CD(u(t)),\label{a.pro.pri.5}
\end{eqnarray}
where $\CD_{mac}(u(t))$ is the macroscopic dissipation rate given by
\begin{equation*}
\CD_{mac}(u(t))=\sum_{|\al|\leq
N-1}\|\pa_x^\al\na_x(a^u+3c^u,b^u,c^u)\|^2+\| a^u+3c^u\|^2.
\end{equation*}

\end{lemma}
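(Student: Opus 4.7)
\smallskip
\noindent\textbf{Proof plan.} The overall strategy is the weighted $L^2$ energy method organized by the differentiation order $\pa_x^\al\pa_\xi^\be$, combined with the macro-micro decomposition \eqref{mmd}. For each of (i)--(iv) I would apply an appropriate differential operator to the perturbed equation in \eqref{A.CP}, test against the corresponding unknown in $L^2(\R^3\times\R^3)$, and exploit the local coercivity
\[
-\int_{\R^3}(\FL v)\,v \, d\xi \geq \la \int_{\R^3}\nu(\xi)|\{\FI-\FP\}v|^2 d\xi
\]
to produce the microscopic dissipation on the left-hand side. Because $\de_\phi$ and $\sqrt{\CE(u(t))}$ are assumed small, the factor $e^\phi$ multiplying $\FL$ perturbs the coercivity constant only negligibly, and all remainder terms arising from the $\phi$-dependent drift and from the quadratic $\Ga(u,u)$, controlled via Lemma \ref{ineq.ga.K} and Sobolev embedding in $\R^3$, will fit into $C(\de_\phi+\sqrt{\CE(u(t))})\CD(u(t))$.

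For (i), I would multiply \eqref{A.CP}$_1$ by $u$ and integrate in $(x,\xi)$. The free streaming $\xi\cdot\na_x u$ vanishes, and $\na_x(\Phi+\phi)\cdot\na_\xi u$ cancels against half of $-\frac{1}{2}\xi\cdot\na_x(\Phi+\phi)\,u$ after integration by parts in $\xi$, the other half being a cubic error. The source $-\xi\cdot\na_x\Phi\, e^\phi\sqrt{\FM}$ produces $-\int e^\phi\,\na_x\Phi\cdot b^u dx$, which via the Poisson equation $\De_x\Phi=a^u+3c^u$ and mass conservation $\pa_t(a^u+3c^u)+\na_x\cdot b^u=0$ yields $\frac{1}{2}\pa_t\|\na_x\Phi\|^2$ up to $\phi$-dependent and cubic corrections; the correction $-2\int e^{-\phi}|b^u|^2 c^u dx$ inside the functional is a normal-form adjustment designed to cancel one such cubic flux generated by the $e^\phi$ factor together with the $b^u$-dependent piece of \eqref{cl.2}. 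Estimate (ii) follows from the identical computation applied to $\pa_x^\al u$ for $1\leq|\al|\leq N$. For (iii) I would apply $\pa_x^\al\pa_\xi^\be$ with $|\be|\geq 1$ to the $\{\FI-\FP\}u$-equation \eqref{lem-micro-SVMD-p02}; the commutators $[\pa_\xi^\be,\xi\cdot\na_x]$ and $[\pa_\xi^\be,\nu(\xi)]$ will produce terms of lower order in $|\be|$ that I would absorb by tuning the constants $C_{N,k}$ in \eqref{a.pro.pri.3def} so that an induction on $|\be|$ closes, while the macroscopic residues arising when $\pa_\xi^\be$ hits $\FP u$-type terms force $\|\pa_x^\al\na_x(a^u,b^u,c^u)\|^2$ to appear on the right of \eqref{a.pro.pri.3}.

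The main obstacle is (iv). I would build $\CE_{free}$ as the indicated linear combination of dual pairings so that differentiating in $t$ and invoking the moment equations \eqref{HM1}--\eqref{HM3} together with their consequence \eqref{HM4.no} replaces time derivatives by spatial ones: the pairing against $\pa_x^\al(\pa_i b_j^u+\pa_j b_i^u)$ delivers $\|\na_x b^u\|_{H^{N-1}}^2$ modulo microscopic terms, and the pairing against $\pa_x^\al\pa_i c^u$ delivers $\|\na_x c^u\|_{H^{N-1}}^2$. The decisive piece is the last term $-\langle\pa_x^\al(a^u+3c^u),\pa_x^\al\na_x\cdot b^u\rangle$: differentiating in $t$, using $\pa_t(a^u+3c^u)+\na_x\cdot b^u=0$ on one factor and the momentum equation \eqref{cl.1} on the other, transfers a time derivative onto $b^u$ and generates a term $\langle\pa_x^\al(a^u+3c^u),\pa_x^\al\na_x\De_x\Phi\rangle$; the Poisson identity $\De_x\Phi=a^u+3c^u$ then converts this into the crucial $\|a^u+3c^u\|^2$ dissipation, which is invisible to the pure Boltzmann system but is here supplied by the electric coupling. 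Choosing $\kappa_0>0$ small enough to dominate the residual $\|\{\FI-\FP\}u\|_{H^N_\nu}$ and the cubic errors arising from the $\phi$-modified balance laws produces \eqref{a.pro.pri.5} with the stated dissipation functional, completing the four estimates.
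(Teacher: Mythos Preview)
The paper does not actually prove this lemma: it is stated in the appendix and explicitly attributed to the companion work \cite{DY-09VPB} (``The following lemma was obtained by \cite{DY-09VPB} in the proof of uniform \emph{a priori} estimates\ldots''). So there is no in-paper proof to compare against line by line.

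That said, your sketch is consistent with the indications the paper does give. The remark following \eqref{HM4.no} states that the proof of \eqref{a.pro.pri.5} ``is only based on similar moment equations in presence of external forcing corresponding to \eqref{cl.0}--\eqref{cl.2}, \eqref{cl.Poisson}, \eqref{HM1}--\eqref{HM3} and \eqref{HM4.no}'', which is exactly the mechanism you describe for (iv); and the paper's own detailed proof of Lemma~\ref{lem.li.fr} (the Fourier-side linearized analogue of (iv)) follows the same pattern of pairing the $A_{jm}$ and $B_j$ moments against $b^u$ and $c^u$, then pairing $b^u$ against $a^u+nc^u$ and using the Poisson equation to extract the undifferentiated $|a^u+nc^u|^2$ dissipation. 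Your identification of this last step as the ``decisive piece'' supplied by the electric coupling matches the paper's treatment precisely. Likewise, parts (i)--(iii) of your plan (direct $L^2$ testing, coercivity of $\FL$, commutator control by induction on $|\beta|$ via the constants $C_{N,k}$) are the standard ingredients and mirror what the paper does for the closely related Lemmas~\ref{lem.ES} and~\ref{lem.ES.w}.

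One small point: your explanation of the cubic correction $-2\int e^{-\phi}|b^u|^2 c^u\,dx$ in (i) is somewhat heuristic. In the actual argument this term is needed because, when testing the $\frac{1}{2}\xi\cdot\nabla_x(\Phi+\phi)\,u$ term against $u$, the purely macroscopic contribution $\int\nabla_x\Phi\cdot\langle\frac{1}{2}\xi\sqrt{\FM},(\FP u)^2\rangle\,dx$ is cubic but cannot be bounded by $\sqrt{\CE}\,\CD$ directly (it contains no microscopic factor and no spatial derivative of $b^u,c^u$); one must instead recognize it as a total time derivative up to admissible errors, using the balance laws \eqref{cl.0}--\eqref{cl.2} and the Poisson equation. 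Your phrase ``normal-form adjustment'' captures the spirit, but the specific cancellation is more delicate than a generic cubic flux correction.
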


\medskip

\noindent {\bf Acknowledgment:}\,\, Renjun Duan
 would like to thank the financial support provided by RICAM.
The research of Robert Strain has been partly supported by the NSF,
in particular by NSF fellowship DMS-0602513 and NSF grant
DMS-0901463.

\addcontentsline{toc}{section}{References}


\begin{thebibliography}{99}\small


\bibitem{CIP-Book}
C. Cercignani, R. Illner and  M. Pulvirenti, {\it The Mathematical
Theory of Dilute Gases}, Applied Mathematical Sciences, {\bf 106}.
Springer-Verlag, New York, 1994.


\bibitem{DV}
L. Desvillettes and C. Villani, On the trend to global equilibrium
for spatially inhomogeneous kinetic systems: The Boltzmann equation.
{\it Invent. Math.} {\bf 159} (2) (2005), 245-316.


\bibitem{DL-CPAM}
R.J. DiPerna and  P.-L. Lions, Global weak solution of
Vlasov-Maxwell systems, {\it Comm. Pure Appl. Math.} {\bf  42}
(1989), 729--757.


\bibitem{DL.BE}
R.J. DiPerna and  P.-L. Lions, On the Cauchy problem for Boltzmann
equation: global existence and weak stability, {\it Ann. Math.} {\bf
130} (1989), 321--366.

\bibitem{DMS}
J. Dolbeault, C. Mouhot and C. Schmeiser, Hypocoercivity for kinetic
equations with linear relaxation terms, {\it C.R. Acad. Sci. Paris}
{\bf 347} (2009), 511--516.

\bibitem{Duan}
R.-J. Duan, On the Cauchy problem for the Boltzmann equation in the
whole space: Global existence and uniform stability in $L^2_\xi
(H^N_x)$, {\it Journal of Differential Equations} {\bf 244} (2008),
3204--3234.


\bibitem{D-09PD}
R.-J. Duan, Stability of the Boltzmann equation with potential
forces on torus, {\it Physica D: Nonlinear Phenomena} {\bf 238}
(2009), 1808--1820.

\bibitem{DY-09VPB}
R.-J. Duan and T. Yang, Stability of the one-species
Vlasov-Poisson-Boltzmann system, preprint (2009).





\bibitem{DYZ-DCDS}
R.-J. Duan, T. Yang and C.-J. Zhu, Boltzmann equation with external
force and Vlasov-Poisson-Boltzmann system in infinite vacuum, {\it
Discrete and Continuous Dynamical Systems} {\bf 16} (2006),
253--277.

\bibitem{DUYZ}
R.-J. Duan, S. Ukai, T. Yang and H.-J. Zhao, Optimal decay estimates
on the linearized Boltzmann equation with time-dependent forces and
their applications, {\it Communications in Mathematical Physics}
{\bf 277} (1)(2008), 189--236.



\bibitem{GS-TTSP}
R. Glassey and  W. Strauss, Decay of the linearized Boltzmann-Vlasov
system, {\it Transport Theory Statist. Phys.} {\bf  28} (1999),
135--156.


\bibitem{GS-DCDS}
R. Glassey and  W. Strauss, Perturbation of essential spectra of
evolution operators and the Vlasov-Poisson-Boltzmann system, {\it
Discrete Contin. Dynam. Systems} {\bf 5} (1999), 457--472.


\bibitem{Guo-IUMJ}
Y. Guo, The Boltzmann equation in the whole space, {\it Indiana
Univ. Math. J.} {\bf 53} (2004), 1081--1094.



\bibitem{Guo2}
Y. Guo, The Vlasov-Poisson-Boltzmann system near Maxwellians, {\it
Comm. Pure Appl. Math.} {\bf 55} (9) (2002), 1104--1135.


\bibitem{Guo3}
Y. Guo, The Vlasov-Poisson-Boltzmann system near vacuum, {\it Comm.
Math. Phys.} {\bf 218} (2) (2001), 293--313.


\bibitem{Ka}
S. Kawashima, Systems of a hyperbolic-parabolic composite type, with
applications to the equations of magnetohydrodynamics, thesis, Kyoto
University (1983).


\bibitem{LMZ-NSP}
H.-L. Li, A. Matsumura and G. Zhang, Optimal decay rate of the
compressible Navier-Stokes system in $\R^3$, {\it Arch. Rational
Mech. Anal.}, in press (2008).



\bibitem{Liu-Yang-Yu}
T.-P. Liu, T. Yang and   S.-H. Yu, Energy method for the Boltzmann
equation, {\it Physica D} {\bf 188} (3-4) (2004), 178--192.



\bibitem{Liu-Yu-Shock}
T.-P. Liu and S.-H. Yu, Boltzmann equation: micro-macro
decompositions and positivity of shock profiles, {\it Commun. Math.
Phys.} {\bf 246} (1) (2004), 133--179.


\bibitem{Liu-Yu-1DG}
T.-P. Liu and S.-H. Yu, The Green's function and large-time behavior
of solutions for the one-dimensional Boltzmann equation, {\it Comm.
Pure Appl. Math.} {\bf  57}  (2004), 1543--1608.


\bibitem{MRS}
P.A. Markowich, C.A. Ringhofer and C. Schmeiser, {\it Semiconductor
Equations}, Springer-Verlag, Vienna, 1990. x+248 pp.

\bibitem{Mischler}
S. Mischler, On the initial boundary value problem for the
Vlasov-Poisson-Boltzmann system, {\it Comm. Math. Phys.} {\bf 210}
(2000), 447--466.

\bibitem{MN}
C. Mouhot and L. Neumann, Quantitative perturbative study of
convergence to equilibrium for collisional kinetic models in the
torus, {\it Nonlinearity} {\bf 19} (4) (2006), 969--998.

\bibitem{MV}
C. Mouhot and  C. Villani, On Landau damping, preprint (2009).

\bibitem{S.VMB}
R. M. Strain, The Vlasov-Maxwell-Boltzmann system in the whole
space, {\it Commun. Math. Phys.} {\bf 268} (2) (2006), 543--567.

\bibitem{SG0}
R.M. Strain and Y. Guo,
 Almost Exponential Decay Near Maxwellian,
 {\it Comm. P.D.E.}, {\bf 31} (2006), no. 3, 417-429.

\bibitem{SG}
R.M. Strain and Y. Guo, Exponential decay for soft potentials near
Maxwellian,  {\it Arch. Ration. Mech. Anal.}  {\bf 187}  (2008),
287--339.




\bibitem{Ukai-1974}
S. Ukai, On the existence of global solutions of mixed problem for
non-linear Boltzmann equation, {\it Proceedings of the Japan
Academy} {\bf 50} (1974), 179--184.


\bibitem{UY-AA}
S. Ukai and  T. Yang, The Boltzmann equation in the space $L^2\cap
L^\infty_\beta$: Global and time-periodic solutions, {\it Analysis
and Applications} {\bf 4} (2006), 263--310.


\bibitem{Vi}
C. Villani, Hypocoercivity, {\it Memoirs Amer. Math. Soc.}, in press
(2009).


\bibitem{YYZ-ARMA}
T. Yang, H.J. Yu and H.J. Zhao, Cauchy problem for the
Vlasov-Poisson-Boltzmann system, {\it Arch. Rational Mech. Anal.}
 {\bf 182} (2006), 415--470.



\bibitem{YZ-CMP}
T.  Yang and H.-J. Zhao, Global existence of classical solutions to
the Vlasov-Poisson-Boltzmann system, {\it Commun. Math. Phys.} {\bf
268} (2006), 569--605.




\end{thebibliography}
\end{document}